\theoremstyle{plain}
\newtheorem{theorem}{Theorem}[section]
\newtheorem{lemma}[theorem]{Lemma}
\newtheorem{proposition}[theorem]{Proposition}
\theoremstyle{definition}
\newtheorem{definition}[theorem]{Definition}
\newtheorem{assumption}[theorem]{Assumption}
\theoremstyle{remark}
\newtheorem{remark}[theorem]{Remark}
\numberwithin{equation}{section}
\DeclareMathOperator*{\esup}{ess\,sup}
\begin{document}

\title[Parabolic $p$-Laplace equation in a moving domain]{Weak solutions to the parabolic $p$-Laplace equation in a moving domain under a Neumann type boundary condition}

\author[T.-H. Miura]{Tatsu-Hiko Miura}
\address{Graduate School of Science and Technology, Hirosaki University, 3, Bunkyo-cho, Hirosaki-shi, Aomori, 036-8561, Japan}
\email{thmiura623@hirosaki-u.ac.jp}

\subjclass[2020]{35K20, 35K92, 35R37}

\keywords{Parabolic $p$-Laplace equation, moving domain, weak solution}

\begin{abstract}
  This paper studies the parabolic $p$-Laplace equation with $p>2$ in a moving domain under a Neumann type boundary condition corresponding to the total mass conservation.
  We establish the existence and uniqueness of a weak solution by the Galerkin method in evolving Bochner spaces and a monotonicity argument.
  The main difficulty is in characterizing the weak limit of the nonlinear gradient term, where we need to deal with a term which comes from the boundary condition and cannot be absorbed into a monotone operator.
  To overcome this difficulty, we prove a uniform-in-time Friedrichs type inequality on a moving domain with time-dependent basis functions and make use of it to get the strong convergence of approximate solutions.
  We also show that the time derivative exists in the $L^2$ sense when given data have a better regularity, and discuss extension of the existence and uniqueness results to a Leray--Lions type operator.
\end{abstract}

\maketitle

%%% Section 1 %%%
\section{Introduction} \label{S:Intro}
Let $T\in(0,\infty)$ and $n\geq1$.
For $t\in[0,T]$, let $\Omega_t$ be a bounded domain in $\mathbb{R}^n$ that moves in time.
We assume that $\Omega_t$ is $C^2$ in space and $C^1$ in time (see Assumption \ref{A:Domain} for the precise assumption).
For $p\in(2,\infty)$, we consider the parabolic $p$-Laplace equation
\begin{align} \label{E:pLap_MoDo}
  \left\{
  \begin{alignedat}{3}
    \partial_tu-\mathrm{div}(|\nabla u|^{p-2}\nabla u) &= f &\quad &\text{in} &\quad &Q_T := \textstyle\bigcup_{t\in(0,T)}\Omega_t\times\{t\}, \\
    |\nabla u|^{p-2}\partial_\nu u+V_\Omega u &= 0 &\quad &\text{on} &\quad &\partial_\ell Q_T := \textstyle\bigcup_{t\in(0,T)}\partial\Omega_t\times\{t\}, \\
    u|_{t=0} &= u_0 &\quad &\text{in} &\quad &\Omega_0
  \end{alignedat}
  \right.
\end{align}
for given functions $f$ and $u_0$.
Here, $\partial_\nu$ is the outer normal derivative on the boundary $\partial\Omega_t$ and $V_\Omega$ is the scalar outer normal velocity of $\partial\Omega_t$.
We impose a Neumann type (or Robin) boundary condition in order to get the total mass conservation
\begin{align*}
  \frac{d}{dt}\int_{\Omega_t}u\,dx = \int_{\Omega_t}f\,dx, \quad t\in(0,T)
\end{align*}
by the Reynolds transport theorem and the divergence theorem.

We consider weak solutions to \eqref{E:pLap_MoDo} in the framework of evolving Bochner spaces and a weak material time derivative introduced in \cite{AlElSt15_PM,AlCaDjEl23}.
The purpose of this paper is to establish the existence and uniqueness of a weak solution for the given data
\begin{align*}
  f \in L_{[W^{1,p}]^\ast}^{p'} = L^{p'}(0,T;[W^{1,p}(\Omega_t)]^\ast), \quad u_0\in L^2(\Omega_0),
\end{align*}
where $1/p+1/p'=1$ (see Theorem \ref{T:Exi_Uni}).
We refer to Section \ref{SS:Pre_FS} for the precise definition of evolving Bochner spaces $L_{\mathcal{X}}^q$.
In a weak formulation of \eqref{E:pLap_MoDo} (see Definition \ref{D:WS_pLap}), we use a weak material derivative (a weak time derivative along a velocity of $\Omega_t$) by following the abstract framework of \cite{AlElSt15_PM,AlCaDjEl23} (see Definition \ref{D:We_MatDe}).
When
\begin{align*}
  f \in L_{L^2}^2 = L^2(0,T;L^2(\Omega_t)), \quad u_0\in W^{1,p}(\Omega_0),
\end{align*}
we also show in Theorem \ref{T:Reg_dt} that the weak material derivative (and thus the time derivative) of a weak solution actually exists in the $L^2$ sense.
The existence and uniqueness results in Theorem \ref{T:Exi_Uni} can be extended to a parabolic equation with a Leray--Lions type operator that generalizes the $p$-Laplacian.
We discuss it in Section \ref{S:LerLio}.

Partial differential equations (PDEs) in time-dependent domains appear in various fields such as biology, engineering, and fluid mechanics (see \cite{KnoKre15} and the reference cited therein).
There is an extensive literature on PDEs in moving domains, especially on linear parabolic equations (see e.g. \cite{GiaSav96,BrHuLi97,Sav97,BonGua01,BuChSy04}).
Evolution equations with monotone operators such as the parabolic $p$-Laplace equation have been studied well when a domain does not move in time (see e.g. \cite{LaSoUr68,Lio69,Zei90_2B}).
Also, several authors have studied evolution equations with monotone operators in the case of space-time noncylindrical domains and the Dirichlet boundary condition (see e.g. \cite{Par13,CaNoOr17,AlCaDjEl23,Nak25}).
In this paper, we consider the Neumann type boundary condition in \eqref{E:pLap_MoDo} that is different from the Dirichlet one and physically relevant in view of mass conservation.
We give a new idea for dealing with an additional difficulty coming from the Neumann type boundary condition.

Let us briefly explain the outline of the proofs and the main difficulty.
First, we show the uniqueness of a weak solution based on a monotonicity argument.
Usually, we compute the time derivative of the $L^2$-norm of the difference $v$ of two weak solutions by using its weak form.
In the present case, however, this procedure generates the term
\begin{align*}
  \int_0^T\bigl(v(t),[\mathbf{v}_\Omega\cdot\nabla v](t)\bigr)_{L^2(\Omega_t)}\,dt,
\end{align*}
which cannot be properly absorbed into the integrals of $|v|^2$ and $|\nabla v|^p$ due to $p>2$.
Here, $\mathbf{v}_\Omega$ is a velocity of $\Omega_t$ (see Section \ref{SS:Pre_MoDo}) and this problematic term comes from the boundary condition of \eqref{E:pLap_MoDo} (see Section \ref{S:WF_Uni} for the derivation of a weak form of \eqref{E:pLap_MoDo}).
To circumvent this difficulty, we compute the time derivative of the (mollified) $L^1$-norm of $v$ by following the idea of the proof of \cite[Theorem 2.9]{CaNoOr17} (see Proposition \ref{P:Uni} for details).

To prove the existence of a weak solution, we carry out the Galerkin method with time-dependent basis functions $w_k^t$ on $\Omega_t$, which are advected by the velocity $\mathbf{v}_\Omega$ from smooth basis functions $w_k^0$ on $\Omega_0$ that form an orthonormal basis of $L^2(\Omega_0)$.
Then, as usual, we construct finite dimensional approximate solutions $u_N$, derive energy estimates for $u_N$ that give the weak convergence of $u_N$, and show that the weak limit of $u_N$ is indeed a weak solution to \eqref{E:pLap_MoDo} by a monotonicity argument (see Section \ref{S:Exis} for details).
Also, we get a regularity of the time derivative of a weak solution by the Galerkin method with higher order energy estimates (see Section \ref{S:Reg} for details).

The main difficulty is that we need to deal with the limits as $N\to\infty$ of
\begin{align} \label{E:Intro_NL}
  \int_0^T\bigl(u_N(t),[u_N\,\mathrm{div}\,\mathbf{v}_\Omega](t)\bigr)_{L^2(\Omega_t)}\,dt, \quad \int_0^T\bigl(u_N(t),[\mathbf{v}_\Omega\cdot\nabla u_N](t)\bigr)_{L^2(\Omega_t)}\,dt
\end{align}
when we characterize the weak limit of $|\nabla u_N|^{p-2}\nabla u_N$ by a monotonicity argument.
Again, these terms come from the boundary condition of \eqref{E:pLap_MoDo} (see Section \ref{S:WF_Uni}) and do not appear when a domain does not move in time.
The first term can be absorbed into a monotone operator by considering $e^{-\gamma t}u_N(t)$ with a large constant $\gamma>0$ as in \cite[Proposition 7.7]{AlCaDjEl23}.
However, this idea is not applicable to the second term.
Thus, instead of dealing with $e^{-\gamma t}u_N(t)$, we prove the strong convergence of $u_N$ in $L_{L^2}^2$.
We follow the idea used in the case of non-moving domains \cite[Chapter V, Theorem 6.7]{LaSoUr68}, but the novelty here is that we derive and make use of the following uniform-in-time Friedrichs type inequality
\begin{align*}
  \|\psi\|_{L^2(\Omega_t)}^2 \leq c\left(\sum_{k=1}^{K_\varepsilon}|(\psi,w_k^t)_{L^2(\Omega_t)}|^2+\varepsilon\|\psi\|_{H^1(\Omega_t)}^2\right)
\end{align*}
for all $\varepsilon>0$, $t\in[0,T]$, and $\psi\in H^1(\Omega_t)$ with constants $c>0$ and $K_\varepsilon\in\mathbb{N}$ independent of $t$ and $\psi$ (see Lemma \ref{L:Fried}).
Note that this is not trivial, since the inequality is uniform in $t$ and the family $\{w_k^t\}_k$ is not orthonormal in $L^2(\Omega_t)$ for $t>0$ in general while it is so at $t=0$.
We will get this inequality by using the Friedrichs inequality at $t=0$ with the orthonormal basis $\{w_k^0\}_k$ of $L^2(\Omega_0)$ and a suitable change of variables.
Here, we also note that the strong convergence of $u_N$ enables us not only to get the convergence of \eqref{E:Intro_NL} but also to avoid dealing with the time traces $u_N(0)$ and $u_N(T)$ that usually appear when one uses a monotonicity argument (see e.g. \cite{LaSoUr68,Lio69,Zei90_2B,AlCaDjEl23}).

Lastly, let us give comments on the case where $\Omega_t$ is a moving thin domain.
When $p=2$, we studied the heat equation in a moving thin domain around a moving closed hypersurface in \cite{Miu17,Miu23_HMTD}.
We investigated the behavior of a solution as the thickness of a thin domain tends to zero, derived the heat equation on a moving surface as a limit problem, and compared solutions to the original and limit problems in terms of the thickness of a thin domain.
Recently, in \cite{Miu26pre_pLMTD}, we studied the parabolic $p$-Laplace equation \eqref{E:pLap_MoDo} in a moving thin domain and derived a limit problem on a moving surface.
The proof of that result is based on the results and ideas presented in this paper.

The rest of this paper is organized as follows.
In Section \ref{S:Prelim}, we fix notations on moving domains and function spaces, and prepare some basic results.
We give the definition of a weak solution to \eqref{E:pLap_MoDo} and prove its uniqueness in Section \ref{S:WF_Uni}.
Also, we construct a weak solution by the Galerkin method in Section \ref{S:Exis}.
In Section \ref{S:Reg}, we get the regularity of the time derivative of a weak solution.
In Section \ref{S:LerLio}, we briefly observe that the existence and uniqueness results can be extended to a Leray--Lions type operator.
We make concluding remarks in Section \ref{S:Concl}.

%%% Section 2 %%%
\section{Preliminaries} \label{S:Prelim}

We fix several notations and give basic results used in the following sections.

In what follows, we write $c$ for a general positive constant independent of variables, indices, and functions appearing in inequalities.
Also, we consider vectors in $\mathbb{R}^n$ as column vectors.
When $\Psi$ is a mapping from a subset of $\mathbb{R}^n$ into $\mathbb{R}^n$, we write
\begin{align*}
  \nabla\Psi =
  \begin{pmatrix}
    \partial_1\Psi & \cdots & \partial_n\Psi
  \end{pmatrix}^{\mathrm{T}},
  \quad \partial_1\Psi,\dots,\partial_n\Psi\in\mathbb{R}^n, \quad \partial_i = \frac{\partial}{\partial x_i}
\end{align*}
for the gradient matrix of $\Psi$.
Here, we take the transpose to write
\begin{align*}
  \nabla(u\circ\Psi) = \nabla\Psi[\nabla u\circ\Psi]
\end{align*}
for a function $u$ on $\mathbb{R}^n$.
For a Banach space $\mathcal{X}$, we denote by $[\mathcal{X}]^\ast$ the dual space of $\mathcal{X}$ and by $\langle\cdot,\cdot\rangle_{\mathcal{X}}$ the duality product between $[\mathcal{X}]^\ast$ and $\mathcal{X}$.
Also, we abuse the notation
\begin{align*}
  (u_1,u_2)_{L^2(\Omega_t)} = \int_{\Omega_t}u_1(x)u_2(x)\,dx
\end{align*}
whenever the right-hand integral makes sense.

\subsection{Moving domain} \label{SS:Pre_MoDo}
Fix a finite $T>0$.
For $t\in[0,T]$, let $\Omega_t$ be a bounded domain in $\mathbb{R}^n$ with $n\geq1$, and let $Q_T$ and $\overline{Q_T}$ be a space-time domain and its closure given by
\begin{align*}
  Q_T := \bigcup_{t\in(0,T)}\Omega_t\times\{t\}, \quad \overline{Q_T} := \bigcup_{t\in[0,T]}\overline{\Omega_t}\times\{t\}.
\end{align*}
We make the following assumption on the regularity and motion of $\Omega_t$.

\begin{assumption} \label{A:Domain}
  We assume that $\Omega_0$ is a $C^2$ domain and there exists a mapping
  \begin{align*}
    \overline{\Omega_0}\times[0,T]\ni(X,t)\mapsto \Phi_t(X)\in\mathbb{R}^n, \quad \Phi_{(\cdot)}\in C^1([0,T];C^2(\overline{\Omega_0})^n)
  \end{align*}
  such that $\Phi_0$ is the identity mapping on $\overline{\Omega_0}$ and, for each $t\in[0,T]$,
  \begin{itemize}
    \item $\Phi_t\colon\overline{\Omega_0}\to\overline{\Omega_t}$ is a $C^2$-diffeomorphism with $\Phi_t(\Omega_0)=\Omega_t$ and $\Phi_t(\partial\Omega_0)=\partial\Omega_t$,
    \item $\Phi_t|_{\Omega_0}\colon\Omega_0\to\Omega_t$ and $\Phi_t|_{\partial\Omega_0}\colon\partial\Omega_0\to\partial\Omega_t$ are also $C^2$-diffeomorphisms.
  \end{itemize}
\end{assumption}

Note that $\Omega_t$ is also a $C^2$ domain for all $t\in[0,T]$ by Assumption \ref{A:Domain}.
For each $t\in[0,T]$, we denote by $\Phi_t^{-1}$ the inverse mapping of $\Phi_t$.
The regularity of $\Phi_{(\cdot)}^{-1}$ with respect to time is not stated in Assumption \ref{A:Domain}, but we have (at least) the next result.
Note that we write $\nabla\Phi_t^{-1}$ for the gradient matrix of $\Phi_t^{-1}$ in $x$ for each $t\in[0,T]$.

\begin{lemma} \label{L:Pinv_Reg}
  The mappings $\Phi_{(\cdot)}^{-1}$ and $\nabla\Phi_{(\cdot)}^{-1}$ are of class $C^1$ on $\overline{Q_T}$.
\end{lemma}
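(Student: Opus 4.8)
The plan is to deduce the $C^1$-regularity of $\Phi_{(\cdot)}^{-1}$ in $\overline{Q_T}$ from the $C^1$-in-time, $C^2$-in-space regularity of $\Phi_{(\cdot)}$ via the implicit function theorem, applied with the time variable as a parameter. Concretely, set $F(x,X,t) := \Phi_t(X) - x$ for $x$ near $\overline{\Omega_t}$, $X$ near $\overline{\Omega_0}$, and $t\in[0,T]$; then $\Phi_t^{-1}(x)$ is characterized by $F(x,\Phi_t^{-1}(x),t) = 0$. By Assumption \ref{A:Domain}, the map $(X,t)\mapsto\Phi_t(X)$ is $C^1$ jointly (indeed $C^1$ in $t$ with values in $C^2(\overline{\Omega_0})^n$, which gives joint $C^1$ regularity in $(X,t)$), so $F$ is $C^1$ in $(x,X,t)$. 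Moreover $\nabla_X F = \nabla\Phi_t(X)^{\mathrm{T}}$ is invertible for every $(X,t)$ since $\Phi_t$ is a $C^2$-diffeomorphism. The implicit function theorem (with parameter $t$) then yields that $(x,t)\mapsto\Phi_t^{-1}(x)$ is $C^1$ on a neighborhood of $\overline{Q_T}$ in space-time, hence on $\overline{Q_T}$.

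For the gradient matrix, I would differentiate the identity $\Phi_t(\Phi_t^{-1}(x)) = x$ in $x$, which by the chain rule (with the transpose convention fixed in the excerpt) gives $\nabla\Phi_t^{-1}(x) = \bigl([\nabla\Phi_t]\circ\Phi_t^{-1}(x)\bigr)^{-1}$, or more precisely the matrix inverse of $\nabla\Phi_t$ evaluated at $X=\Phi_t^{-1}(x)$. Now $(X,t)\mapsto\nabla\Phi_t(X)$ is $C^1$ in $t$ with values in $C^1(\overline{\Omega_0})^{n\times n}$, hence jointly $C^1$ in $(X,t)$; composing with the already-established $C^1$ map $(x,t)\mapsto\Phi_t^{-1}(x)$ keeps it $C^1$; and matrix inversion $A\mapsto A^{-1}$ is smooth on the open set of invertible matrices, so $\nabla\Phi_{(\cdot)}^{-1}$ is $C^1$ on $\overline{Q_T}$ as well.

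The only genuine subtlety — and the step I would be most careful about — is the passage from "$C^1$ in $t$ with values in $C^2(\overline{\Omega_0})^n$" to honest joint $C^1$ regularity in $(X,t)$, together with the fact that the base domain $\Omega_t$ itself varies, so "a neighborhood of $\overline{Q_T}$" and the domain of $F$ in the $x$-variable must be set up carefully; one clean way is to extend $\Phi_{(\cdot)}$ to a $C^1([0,T];C^2)$ family of diffeomorphisms of all of $\mathbb{R}^n$ (or of a fixed open neighborhood of $\overline{\Omega_0}$), apply the implicit function theorem there to get a $C^1$ inverse on a fixed cylindrical neighborhood, and then restrict. Everything else — the chain-rule computation and the smoothness of matrix inversion — is routine. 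I expect no further obstacles.
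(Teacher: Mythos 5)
Your proposal is correct and essentially matches the paper's proof: the paper applies the inverse function theorem to the augmented space-time map $\Theta(X,t)=(\Phi_t(X),t)$, whose Jacobian is block-triangular with invertible block $\nabla\Phi_t$, which is precisely the implicit-function-theorem-with-parameter-$t$ argument you propose applied to $F(x,X,t)=\Phi_t(X)-x$. The chain-rule computation for $\nabla\Phi_{(\cdot)}^{-1}$ is likewise identical, and your remark about passing from $C^1([0,T];C^2(\overline{\Omega_0})^n)$ to joint $C^1$ regularity (and about working near the boundary) is a subtlety the paper glosses over but correctly assumes.
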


\begin{proof}
  Let $\Theta(X,t):=(\Phi_t(X),t)$ for $(X,t)\in\overline{\Omega_0}\times[0,T]$.
  Then, by Assumption \ref{A:Domain},
  \begin{align*}
    \Theta\colon\overline{\Omega_0}\times[0,T]\to\overline{Q_T}, \quad \Theta\in C^1(\overline{\Omega_0}\times[0,T])^{n+1},
  \end{align*}
  and the inverse mapping is $\Theta^{-1}(x,t)=(\Phi_t^{-1}(x),t)$.
  Moreover, the gradient matrix of $\Theta$ in $X$ and $t$ is invertible on $\overline{\Omega_0}\times[0,T]$, since it is of the form
  \begin{align*}
    \nabla_{X,t}\Theta =
    \begin{pmatrix}
      \nabla\Phi_t & \mathbf{0} \\
      [\partial_t\Phi_t]^{\mathrm{T}} & 1
    \end{pmatrix},
    \quad \mathbf{0} = (0,\dots,0)^\mathrm{T} \in \mathbb{R}^n, \quad \partial_t\Phi_t\in\mathbb{R}^n,
  \end{align*}
  and $\nabla\Phi_t$ is invertible by Assumption \ref{A:Domain}.
  Thus, we can apply the inverse mapping theorem to $\Theta$ to find that $\Theta^{-1}$ and thus $\Phi_{(\cdot)}^{-1}$ are of class $C^1$ on $\overline{Q_T}$.
  Also, since
  \begin{align*}
    \nabla\Phi_t^{-1}(x) = [\nabla\Phi_t(\Phi_t^{-1}(x))]^{-1}, \quad (x,t) \in \overline{Q_T},
  \end{align*}
  we see that $\nabla\Phi_{(\cdot)}^{-1}$ is of class $C^1$ on $\overline{Q_T}$ by the regularity of $\nabla\Phi_{(\cdot)}$ and $\Phi_{(\cdot)}^{-1}$.
\end{proof}

Let $J_t:=\det\nabla\Phi_t$ on $\overline{\Omega_0}$ for $t\in[0,T]$.
By Assumption \ref{A:Domain}, $J_{(\cdot)}$ is of class $C^1$ and does not vanish on $\overline{\Omega_0}\times[0,T]$.
Moreover, $J_0\equiv1>0$ since $\Phi_0$ is the identity mapping.
Thus,
\begin{align} \label{E:Det_Bd}
  c_0 \leq J_t(X) \leq c_1, \quad |\nabla J_t(X)| \leq c_1 \quad\text{for all}\quad (X,t)\in\overline{\Omega_0}\times[0,T]
\end{align}
with some constants $c_0,c_1>0$.
In particular,
\begin{align} \label{E:Volume}
  c^{-1}|\Omega_0| \leq |\Omega_t| \leq c|\Omega_0| \quad\text{for all}\quad t\in[0,T]
\end{align}
for the volume $|\Omega_t|:=\int_{\Omega_t}\,dx=\int_{\Omega_0}J_t(X)\,dX$ of $\Omega_t$.
Also,
\begin{align} \label{E:Grad_Bd}
  c_2^{-1}|\nabla u| \leq |(\nabla U)\circ\Phi_t^{-1}| \leq c_2|\nabla u| \quad\text{on}\quad \Omega_t
\end{align}
for all $t\in[0,T]$, $u\colon\Omega_t\to\mathbb{R}$, and $U:=u\circ\Phi_t$ on $\Omega_0$ with some constant $c_2>0$.

We define the velocity field $\mathbf{v}_\Omega\colon\overline{Q_T}\to\mathbb{R}^n$ associated with the mapping $\Phi_{(\cdot)}$ by
\begin{align*}
  \mathbf{v}_\Omega(x,t) := \partial_t\Phi_t(\Phi_t^{-1}(x)) = \frac{\partial}{\partial t}\Bigl(\Phi_t(X)\Bigr)\Big|_{X=\Phi_t^{-1}(x)}, \quad (x,t)\in\overline{Q_T}.
\end{align*}
By the regularity of $\Phi_{(\cdot)}$ and $\Phi_{(\cdot)}^{-1}$, we have
\begin{align} \label{E:Vel_Bd}
  |\mathbf{v}_\Omega| \leq c, \quad |\nabla\mathbf{v}_\Omega| \leq c \quad\text{on}\quad \overline{Q_T}.
\end{align}
For $t\in[0,T]$, let $\bm{\nu}(\cdot,t)$ be the unit outward normal vector field of $\partial\Omega_t$.
We define the scalar outer normal velocity $V_\Omega$ of $\partial\Omega_t$ by $V_\Omega:=\mathbf{v}_\Omega\cdot\bm{\nu}$ on $\partial\Omega_t$.

\subsection{Function spaces} \label{SS:Pre_FS}
Next, we introduce evolving Bochner spaces of functions on $Q_T$.
We follow the abstract framework developed in \cite{AlElSt15_PM,AlCaDjEl23}.
For details of the abstract theory, we refer to these papers.

For functions $U$ on $\Omega_0$ and $u$ on $\Omega_t$, let
\begin{align*}
  \phi_tU := U\circ\Phi_t^{-1} \quad\text{on}\quad \Omega_t, \quad \phi_{-t}u := u\circ\Phi_t \quad\text{on}\quad \Omega_0.
\end{align*}
If $u$ is a function on $Q_T$, then $\phi_{-t}u(t)$ is defined on $\Omega_0$ for $t\in(0,T)$.
When $\phi_{-t}u(t)$ is differentiable in $t$, we define the (strong) material derivative of $u$ on $Q_T$ by
\begin{align*}
  \partial^\bullet u := \phi_t[\partial_t(\phi_{-t}u(t))], \quad\text{i.e.,}\quad \partial^\bullet u(x,t) := \frac{\partial}{\partial t}\Bigl(u(\Phi_t(X),t)\Bigr)\Big|_{X=\Phi_t^{-1}(x)}
\end{align*}
for $(x,t)\in Q_T$.
If $u\in C^1(Q_T)$, then $\partial^\bullet u=\partial_tu+\mathbf{v}_\Omega\cdot\nabla u$ on $Q_T$ by the chain rule.

Let $\mathcal{X}=L^r,W^{1,r}$ with $r\in[1,\infty]$.
By Assumption \ref{A:Domain}, we see that
\begin{align*}
  \phi_t\colon\mathcal{X}(\Omega_0)\to\mathcal{X}(\Omega_t), \quad \phi_{-t}=[\phi_t]^{-1}\colon\mathcal{X}(\Omega_t)\to\mathcal{X}(\Omega_0)
\end{align*}
are bounded linear operators and that the pair $(\mathcal{X}(\Omega_t),\phi_t)_{t\in[0,T]}$ is compatible in the sense of \cite[Assumption 2.1]{AlCaDjEl23}.
We write
\begin{align*}
  \mathcal{X}_T:=\bigcup_{t\in[0,T]}\mathcal{X}(\Omega_t)\times\{t\}, \quad u\colon[0,T]\to\mathcal{X}_T, \quad u(t) = (\hat{u}(t),t)
\end{align*}
and identify $u(t)$ with $\hat{u}(t)$.
For $q\in[1,\infty]$, we define
\begin{align*}
  L_{\mathcal{X}}^q &:= \{u\colon[0,T]\to\mathcal{X}_T \mid \phi_{-(\cdot)}{u}(\cdot)\in L^q(0,T;\mathcal{X}(\Omega_0))\}.
\end{align*}
The space $L_{\mathcal{X}}^q$ is a Banach space equipped with norm
\begin{align} \label{E:Def_LBq}
  \|u\|_{L_{\mathcal{X}}^q} :=
  \begin{cases}
    \left(\int_0^T\|u(t)\|_{\mathcal{X}(\Omega_t)}^q\,dt\right)^{1/q} &\text{if}\quad q\neq\infty, \\
    \esup_{t\in[0,T]}\|u(t)\|_{\mathcal{X}(\Omega_t)} &\text{if}\quad q=\infty.
  \end{cases}
\end{align}
In particular, we may consider $L_{L^q}^q=L^q(Q_T)$.
Moreover,
\begin{align} \label{E:LBq_Equi}
  c^{-1}\|u\|_{L_{\mathcal{X}}^q} \leq \|\phi_{-(\cdot)}u(\cdot)\|_{L^q(0,T;\mathcal{X}(\Omega_0))} \leq c\|u\|_{L_{\mathcal{X}}^q}
\end{align}
for all $u\in L_{\mathcal{X}}^q$ by \eqref{E:Det_Bd} and \eqref{E:Grad_Bd}.
For $k\in\mathbb{Z}_{\geq0}$, we define
\begin{align*}
  C_{\mathcal{X}}^k &:= \{u\colon[0,T]\to\mathcal{X}_T \mid \phi_{-(\cdot)}u(\cdot)\in C^k([0,T];\mathcal{X}(\Omega_0))\} \quad (C^0 = C), \\
  \mathcal{D}_{\mathcal{X}} &:= \{u\colon[0,T]\to\mathcal{X}_T \mid \phi_{-(\cdot)}u(\cdot)\in\mathcal{D}(0,T;\mathcal{X}(\Omega_0))\} \quad (\mathcal{D} = C_c^\infty).
\end{align*}
When $q\neq\infty$, we easily find that $\mathcal{D}_{\mathcal{X}}$ is dense in $L_{\mathcal{X}}^q$ by using \eqref{E:LBq_Equi} and applying cut-off and mollification in time to $\phi_{-(\cdot)}u(\cdot)$.

Let $\phi_t^\ast\colon[\mathcal{X}(\Omega_t)]^\ast\to[\mathcal{X}(\Omega_0)]^\ast$ be the dual operator of $\phi_t\colon\mathcal{X}(\Omega_0)\to\mathcal{X}(\Omega_t)$.
We write
\begin{align*}
  [\mathcal{X}]_T^\ast:=\bigcup_{t\in[0,T]}[\mathcal{X}(\Omega_t)]^\ast\times\{t\}, \quad f\colon[0,T]\to[\mathcal{X}]_T^\ast, \quad f(t) = (\hat{f}(t),t)
\end{align*}
and again identify $f(t)$ with $\hat{f}(t)$.
For $q\in[1,\infty]$, we set
\begin{align*}
  L_{[\mathcal{X}]^\ast}^q := \{f\colon[0,T]\to[\mathcal{X}]_T^\ast \mid \phi_{(\cdot)}^\ast f(\cdot)\in L^q(0,T;[\mathcal{X}(\Omega_0)]^\ast)\}
\end{align*}
and define the norm $\|f\|_{L_{[\mathcal{X}]^\ast}^q}$ as in \eqref{E:Def_LBq}.
If $q\neq\infty$ and $\mathcal{X}(\Omega_t)$ is reflexive, then
\begin{align*}
  [L_{\mathcal{X}}^q]^\ast=L_{[\mathcal{X}]^\ast}^{q'}, \quad \langle f,u\rangle_{L_{\mathcal{X}}^q} = \int_0^T\langle f(t),u(t)\rangle_{\mathcal{X}(\Omega_t)}\,dt, \quad f\in L_{[\mathcal{X}]^\ast}^{q'}, \, u\in L_{\mathcal{X}}^q
\end{align*}
with $1/q+1/q'=1$.
Thus, $L_{\mathcal{X}}^q$ is reflexive if $q\neq1,\infty$.
Also, for all $f\in L_{[\mathcal{X}]^\ast}^{q'}$,
\begin{align} \label{E:Dual_Equi}
  c^{-1}\|f\|_{L_{[\mathcal{X}]^\ast}^{q'}} \leq \|\phi_{(\cdot)}^\ast f(\cdot)\|_{L^{q'}(0,T;[\mathcal{X}(\Omega_0)]^\ast)} \leq c\|f\|_{L_{[\mathcal{X}]^\ast}^{q'}}
\end{align}
by \eqref{E:LBq_Equi}, $L_{[\mathcal{X}]^\ast}^{q'}=[L_{\mathcal{X}}^q]^\ast$, and $L^{q'}(0,T;[\mathcal{X}(\Omega_0)]^\ast)=[L^q(0,T;\mathcal{X}(\Omega_0))]^\ast$.

When $u\in C^1(\overline{Q_T})$, the Reynolds transport theorem (see e.g. \cite{Gur81}) gives
\begin{align*}
  \frac{d}{dt}\int_{\Omega_t}u(t)\,dx = \int_{\Omega_t}\{\partial^\bullet u(t)+[u\,\mathrm{div}\,\mathbf{v}_\Omega](t)\}\,dx, \quad t\in[0,T].
\end{align*}
Based on this formula, we define a weak time derivative of $u\in L_{W^{1,p}}^1$ as follows.

\begin{definition} \label{D:We_MatDe}
  Let $p\in(2,\infty)$.
  We say that $u\in L_{W^{1,p}}^1$ has the weak material derivative if there exists a functional $v\in L_{[W^{1,p}]^\ast}^1$ such that
  \begin{align*}
     \int_0^T\langle v(t),\psi(t)\rangle_{W^{1,p}(\Omega_t)}\,dt = -\int_0^T\bigl(u(t),[\partial^\bullet\psi+\psi\,\mathrm{div}\,\mathbf{v}_\Omega](t)\bigr)_{L^2(\Omega_t)}\,dt
  \end{align*}
  for all $\psi\in\mathcal{D}_{W^{1,p}}$.
  In what follows, we write $v=\partial^\bullet u$.
\end{definition}

\begin{definition} \label{D:TD_SoBo}
  Let $p\in(2,\infty)$ and $p'\in(1,2)$ satisfy $1/p+1/p'=1$.
  We set
  \begin{align*}
    \mathbb{W}^{p,p'} := \{u\in L_{W^{1,p}}^p \mid \partial^\bullet u\in L_{[W^{1,p}]^\ast}^{p'}\}, \quad \|u\|_{\mathbb{W}^{p,p'}} := \|u\|_{L_{W^{1,p}}^p}+\|\partial^\bullet u\|_{L_{[W^{1,p}]^\ast}^{p'}}.
  \end{align*}
  In \cite[Definition 3.17]{AlCaDjEl23}, it is written as $\mathbb{W}^{p,p'}=\mathbb{W}^{p,p'}(W^{1,p},[W^{1,p}]^\ast)$.
\end{definition}

The space $\mathbb{W}^{p,p'}$ is a Banach space.
Since we have the Gelfand triple structure
\begin{align*}
  W^{1,p}(\Omega_t) \hookrightarrow L^2(\Omega_t) \hookrightarrow [W^{1,p}(\Omega_t)]^\ast,
\end{align*}
we can observe as in \cite[Propisition 6.5]{AlCaDjEl23} that, under Assumption \ref{A:Domain}, there is an evolving space equivalence between the spaces
\begin{align} \label{E:EvSE}
  \mathbb{W}^{p,p'} \quad\text{and}\quad \{U\in L^p(0,T;W^{1,p}(\Omega_0)) \mid \partial_tU\in L^{p'}(0,T;[W^{1,p}(\Omega_0)]^\ast)\}
\end{align}
in the sense of \cite[Definition 3.19]{AlCaDjEl23}.
Hence, the following results are available.

\begin{lemma} \label{L:Trans}
  The embedding $\mathbb{W}^{p,p'}\hookrightarrow C_{L^2}$ is continuous, where
  \begin{align*}
    \|u\|_{C_{L^2}} := \sup_{t\in[0,T]}\|u(t)\|_{L^2(\Omega_t)}, \quad u\in C_{L^2}.
  \end{align*}
  Moreover, for all $u_1,u_2\in\mathbb{W}^{p,p'}$ and almost all $t\in(0,T)$, we have
  \begin{multline} \label{E:Trans}
    \frac{d}{dt}\bigl(u_1(t),u_2(t)\bigr)_{L^2(\Omega_t)} = \langle \partial^\bullet u_1(t),u_2(t)\rangle_{W^{1,p}(\Omega_t)}+\langle \partial^\bullet u_2(t),u_1(t)\rangle_{W^{1,p}(\Omega_t)} \\
    +\bigl(u_1(t),[u_2\,\mathrm{div}\,\mathbf{v}_\Omega](t)\bigr)_{L^2(\Omega_t)}.
  \end{multline}
\end{lemma}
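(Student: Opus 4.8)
The plan is to pull both assertions back to the fixed reference domain $\Omega_0$, where they are classical, and to transport them through the evolving space equivalence recorded above. Set $U_i(t):=\phi_{-t}u_i(t)$ for $i=1,2$. By \cite[Proposition 6.5]{AlCaDjEl23}, $u_i\in\mathbb{W}^{p,p'}$ is equivalent to $U_i$ belonging to the Lions--Magenes space
\begin{align*}
  \mathcal{W}:=\{U\in L^p(0,T;W^{1,p}(\Omega_0))\mid\partial_tU\in L^{p'}(0,T;[W^{1,p}(\Omega_0)]^\ast)\},
\end{align*}
and the only step I expect to require genuine care is making the correspondence between the weak material derivative $\partial^\bullet u_i$ of Definition \ref{D:We_MatDe} and the Bochner derivative $\partial_tU_i$ fully explicit, because of the Jacobian weight. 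First I would record the Euler expansion identity $\partial_tJ_t=J_t\,\phi_{-t}(\mathrm{div}\,\mathbf{v}_\Omega)$ on $\overline{\Omega_0}\times[0,T]$, obtained by differentiating $J_t=\det\nabla\Phi_t$ via Jacobi's formula and inserting $\partial_t\Phi_t=\mathbf{v}_\Omega\circ\Phi_t$. Then, for $\psi\in\mathcal{D}_{W^{1,p}}$ with $\Psi:=\phi_{-(\cdot)}\psi(\cdot)\in C_c^\infty(0,T;W^{1,p}(\Omega_0))$, the change of variables $x=\Phi_t(X)$, the relation $\phi_{-t}(\partial^\bullet\psi)=\partial_t\Psi$, and the Euler identity rewrite the right-hand side of the identity in Definition \ref{D:We_MatDe} as $-\int_0^T\int_{\Omega_0}U_i\,\partial_t(J_t\Psi)\,dX\,dt$, while its left-hand side equals $\int_0^T\langle\phi_t^\ast\partial^\bullet u_i(t),\Psi(t)\rangle_{W^{1,p}(\Omega_0)}\,dt$. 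Since $t\mapsto J_t\Psi(t)$ is $C^1$ with compact support in $(0,T)$, a density argument permits integrating by parts in time, and I obtain
\begin{align*}
  \langle\phi_t^\ast\partial^\bullet u_i(t),\Psi\rangle_{W^{1,p}(\Omega_0)}=\langle\partial_tU_i(t),J_t\Psi\rangle_{W^{1,p}(\Omega_0)}\quad\text{for a.e.\ }t\text{ and all }\Psi\in W^{1,p}(\Omega_0).
\end{align*}
This also shows that the norms of $\mathbb{W}^{p,p'}$ and $\mathcal{W}$ are comparable, using in addition \eqref{E:LBq_Equi} and \eqref{E:Dual_Equi}.

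Granting this dictionary, the continuous embedding is immediate. By the classical Gelfand-triple theory for the evolution pair $W^{1,p}(\Omega_0)\hookrightarrow L^2(\Omega_0)\hookrightarrow[W^{1,p}(\Omega_0)]^\ast$ (see e.g.\ \cite{Zei90_2B}), $\mathcal{W}\hookrightarrow C([0,T];L^2(\Omega_0))$ continuously. Since $u\in C_{L^2}$ means precisely $\phi_{-(\cdot)}u(\cdot)\in C([0,T];L^2(\Omega_0))$ and $\|u(t)\|_{L^2(\Omega_t)}$ is comparable to $\|\phi_{-t}u(t)\|_{L^2(\Omega_0)}$ uniformly in $t$ by \eqref{E:Det_Bd}, this gives $\|u\|_{C_{L^2}}\le c\,\|\phi_{-(\cdot)}u(\cdot)\|_{C([0,T];L^2(\Omega_0))}\le c\,\|U\|_{\mathcal{W}}\le c\,\|u\|_{\mathbb{W}^{p,p'}}$, i.e.\ $\mathbb{W}^{p,p'}\hookrightarrow C_{L^2}$.

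For \eqref{E:Trans} I would start from the pointwise-in-$t$ identity $(u_1(t),u_2(t))_{L^2(\Omega_t)}=\int_{\Omega_0}U_1(t)U_2(t)J_t\,dX$, coming from the change of variables, so that both sides have the same time derivative wherever it exists. Because $J_t(\cdot)\in C^2(\overline{\Omega_0})$ acts as a multiplier on $W^{1,p}(\Omega_0)$, hence on $[W^{1,p}(\Omega_0)]^\ast$, and $J_{(\cdot)}$ is $C^1$ in time, the map $t\mapsto J_tU_2(t)$ again lies in $\mathcal{W}$ with $\partial_t(J_tU_2)=(\partial_tJ_t)U_2+J_t\partial_tU_2$. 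Applying the classical integration-by-parts formula for $\mathcal{W}$ to the pair $U_1$ and $J_tU_2$ then gives, for a.e.\ $t$,
\begin{align*}
  \frac{d}{dt}\int_{\Omega_0}U_1U_2J_t\,dX=\langle\partial_tU_1,J_tU_2\rangle_{W^{1,p}(\Omega_0)}+\langle\partial_tU_2,J_tU_1\rangle_{W^{1,p}(\Omega_0)}+\int_{\Omega_0}U_1U_2\,\partial_tJ_t\,dX.
\end{align*}
It remains to read each term on $\Omega_t$: by the dictionary, $\langle\partial_tU_i,J_tU_j\rangle_{W^{1,p}(\Omega_0)}=\langle\phi_t^\ast\partial^\bullet u_i(t),U_j(t)\rangle_{W^{1,p}(\Omega_0)}=\langle\partial^\bullet u_i(t),u_j(t)\rangle_{W^{1,p}(\Omega_t)}$; and by the Euler identity and the change of variables, $\int_{\Omega_0}U_1U_2\,\partial_tJ_t\,dX=\int_{\Omega_0}U_1U_2J_t\,\phi_{-t}(\mathrm{div}\,\mathbf{v}_\Omega)\,dX=(u_1(t),[u_2\,\mathrm{div}\,\mathbf{v}_\Omega](t))_{L^2(\Omega_t)}$. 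Substituting these back yields \eqref{E:Trans}.

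The only place I anticipate real work is the first step: unwinding Definition \ref{D:We_MatDe} on $\Omega_0$ and tracking the Jacobian $J_t$ correctly. The extra term $(u_1,[u_2\,\mathrm{div}\,\mathbf{v}_\Omega])_{L^2(\Omega_t)}$ in \eqref{E:Trans} is exactly the shadow of $\partial_tJ_t=J_t\,\phi_{-t}(\mathrm{div}\,\mathbf{v}_\Omega)$, i.e.\ of the Reynolds transport theorem that motivated the definition of $\partial^\bullet$; everything else is a transcription of classical Lions--Magenes-type facts through the evolving space equivalence.
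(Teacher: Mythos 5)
Your argument is correct, and it replaces the paper's one-line citation of the abstract transport theorem \cite[Theorem 4.5]{AlCaDjEl23} by a self-contained derivation, so it takes a genuinely different (more explicit) route. The paper simply invokes the abstract result for evolving Bochner spaces, which internally relies on the evolving space equivalence; what you do is instantiate that abstract machinery concretely. The key ingredient you make explicit is the ``dictionary'' $\langle\phi_t^\ast\partial^\bullet u_i(t),\Phi\rangle_{W^{1,p}(\Omega_0)}=\langle\partial_tU_i(t),J_t\Phi\rangle_{W^{1,p}(\Omega_0)}$ linking the weak material derivative on $\Omega_t$ with the Bochner derivative of the pullback $U_i=\phi_{-(\cdot)}u_i$ on $\Omega_0$, obtained from Definition~\ref{D:We_MatDe} by a change of variables together with the Euler identity $\partial_tJ_t=J_t\,\phi_{-t}(\mathrm{div}\,\mathbf{v}_\Omega)$ (which I verified is consistent with the paper's gradient convention). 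Once that dictionary is in hand, both the embedding $\mathbb{W}^{p,p'}\hookrightarrow C_{L^2}$ and \eqref{E:Trans} do indeed reduce to the classical Gelfand-triple facts for the pair $(U_1,J_tU_2)$ in $L^p(0,T;W^{1,p}(\Omega_0))\cap W^{1,p'}(0,T;[W^{1,p}(\Omega_0)]^\ast)$, and the extra term $(u_1,[u_2\,\mathrm{div}\,\mathbf{v}_\Omega])_{L^2(\Omega_t)}$ in \eqref{E:Trans} is exactly the trace of $\partial_tJ_t$, as you observe. Two minor notes: the regularity of the Jacobian is $J_t\in C^1(\overline{\Omega_0})$ (not $C^2$, since $\nabla\Phi_t\in C^1$), though this is harmless because $C^1$ already suffices for the multiplier property on $W^{1,p}(\Omega_0)$ and on its dual; and the step where you pass from the integrated dictionary identity over $(0,T)$ to the a.e.-in-$t$ statement deserves one more sentence (choose $\Psi(t)=\theta(t)\Phi$ with $\theta\in\mathcal{D}(0,T)$ and $\Phi\in W^{1,p}(\Omega_0)$, or equivalently note that as $\Psi$ ranges over $\mathcal{D}(0,T;W^{1,p}(\Omega_0))$ the products $J_{(\cdot)}\Psi(\cdot)$ are dense enough in $L^p(0,T;W^{1,p}(\Omega_0))$). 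The trade-off compared with the paper's approach is clear: citing \cite{AlCaDjEl23} is shorter and reuses machinery that is needed elsewhere in the paper, while your computation has the pedagogical advantage of showing exactly where the $\mathrm{div}\,\mathbf{v}_\Omega$ term in \eqref{E:Trans} comes from.
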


\begin{proof}
  The statements follow from the abstract results given in \cite[Theorem 4.5]{AlCaDjEl23}.
\end{proof}

Let us also give two auxiliary results on the weak material derivative.

\begin{lemma} \label{L:Wpp_Mult}
  Let $\chi\in C^1(\overline{Q_T})$ and $u\in\mathbb{W}^{p,p'}$.
  Then, $\chi u\in\mathbb{W}^{p,p'}$ and
  \begin{align} \label{E:WeMa_Mult}
    \langle[\partial^\bullet(\chi u)](t),\psi(t)\rangle_{W^{1,p}(\Omega_t)} = \langle\partial^\bullet u(t),[\chi\psi](t)\rangle_{W^{1,p}(\Omega_t)}+\bigl(u(t),[\psi\partial^\bullet\chi](t)\bigr)_{L^2(\Omega_t)}
  \end{align}
  for all $\psi\in L_{W^{1,p}}^p$ and almost all $t\in(0,T)$.
  Moreover, for a.a. $t\in(0,T)$,
  \begin{multline} \label{E:Tr_Mult}
    \frac{d}{dt}\bigl(u(t),[\chi u](t)\bigr)_{L^2(\Omega_t)} = 2\langle\partial^\bullet u(t),[\chi u](t)\rangle_{W^{1,p}(\Omega_t)}+\bigl(u(t),[u\partial^\bullet\chi](t)\bigr)_{L^2(\Omega_t)} \\
    +\bigl(u(t),[\chi u\,\mathrm{div}\,\mathbf{v}_\Omega](t)\bigr)_{L^2(\Omega_t)}.
  \end{multline}
\end{lemma}

\begin{proof}
  Since $\chi\in C^1(\overline{Q_T})$, we have $\chi u\in L_{W^{1,p}}^p$.
  Next, let
  \begin{align*}
    L(\psi) := -\int_0^T\bigl([\chi u](t),[\partial^\bullet\psi+\psi\,\mathrm{div}\,\mathbf{v}_\Omega](t)\bigr)_{L^2(\Omega_t)}\,dt, \quad \psi\in\mathcal{D}_{W^{1,p}}.
  \end{align*}
  Then, $L(\psi)$ is linear in $\psi$.
  Moreover, since $\chi\partial^\bullet\psi=\partial^\bullet(\chi\psi)-\psi\partial^\bullet\chi$ and $u\in\mathbb{W}^{p,p'}$,
  \begin{align*}
      L(\psi) &= -\int_0^T\bigl(u(t),[\partial^\bullet(\chi\psi)+\chi\psi\,\mathrm{div}\,\mathbf{v}_\Omega](t)\bigr)_{L^2(\Omega_t)}\,dt+\int_0^T\bigl(u(t),[\psi\partial^\bullet\chi](t)\bigr)_{L^2(\Omega_t)}\,dt \\
      &= \int_0^T\langle\partial^\bullet u(t),[\chi\psi](t)\rangle_{W^{1,p}(\Omega_t)}\,dt+\int_0^T\bigl(u(t),[\psi\partial^\bullet\chi](t)\bigr)_{L^2(\Omega_t)}\,dt.
  \end{align*}
  Hence, by $\chi\in C^1(\overline{Q_T})$, \eqref{E:Vel_Bd}, H\"{o}lder's inequality, $p>2$, \eqref{E:Volume}, and $T<\infty$, we have
  \begin{align*}
    |L(\psi)| \leq \|\partial^\bullet u\|_{L_{[W^{1,p}]^\ast}^{p'}}\|\chi\psi\|_{L_{W^{1,p}}^p}+\|u\|_{L_{L^2}^2}\|\psi\partial^\bullet\chi\|_{L_{L^2}^2} \leq c_T\|u\|_{\mathbb{W}^{p,p'}}\|\psi\|_{L_{W^{1,p}}^p}
  \end{align*}
  with some constant $c_T>0$ depending only on $T$.
  Since $\mathcal{D}_{W^{1,p}}$ is dense in $L_{W^{1,p}}^p$, it follows that $L$ extends to a bounded linear functional on $L_{W^{1,p}}^p$.
  Thus,
  \begin{align*}
    \partial^\bullet(\chi u) \in [L_{W^{1,p}}^p]^\ast = L_{[W^{1,p}]^\ast}^{p'}, \quad \chi u\in\mathbb{W}^{p,p'}.
  \end{align*}
  Moreover, by the above relation for $L(\psi)$ and a density argument, we have
  \begin{align*}
    &\int_0^T\langle[\partial^\bullet(\chi u)](t),\psi(t)\rangle_{W^{1,p}(\Omega_t)}\,dt \\
    &\qquad = L(\psi) = \int_0^T\Bigl(\langle\partial^\bullet u(t),[\chi\psi](t)\rangle_{W^{1,p}(\Omega_t)}+\bigl(u(t),[\psi\partial^\bullet\chi](t)\bigr)_{L^2(\Omega_t)}\Bigr)\,dt
  \end{align*}
  for all $\psi\in L_{W^{1,p}}^p$.
  Thus, replacing $\psi$ by $\theta\psi$ with any $\theta\in C_c^1(0,T)$, we get \eqref{E:WeMa_Mult}.
  Also, we have \eqref{E:Tr_Mult} by \eqref{E:Trans} and \eqref{E:WeMa_Mult}.
\end{proof}

\begin{lemma} \label{L:Wpp_Comp}
  Let $\zeta\in C^2(\mathbb{R})$ such that $\zeta''$ is bounded on $\mathbb{R}$.
  Then,
  \begin{align*}
    \zeta'(u) = \zeta'\circ u \in L_{W^{1,p}}^p, \quad \zeta(u) = \zeta\circ u \in C_{L^1} \quad\text{for all}\quad u\in \mathbb{W}^{p,p'}.
  \end{align*}
  Moreover, for almost all $t\in(0,T)$, we have
  \begin{align} \label{E:Wpp_Comp}
    \frac{d}{dt}\int_{\Omega_t}[\zeta(u)](t)\,dx = \langle\partial^\bullet u(t),[\zeta'(u)](t)\rangle_{W^{1,p}(\Omega_t)}+\int_{\Omega_t}[\zeta(u)\,\mathrm{div}\,\mathbf{v}_\Omega](t)\,dx.
  \end{align}
\end{lemma}

\begin{proof}
  Let $u\in\mathbb{W}^{p,p'}\subset C_{L^2}$ (see Lemma \ref{L:Trans}).
  Since $\zeta''$ is bounded on $\mathbb{R}$,
  \begin{align} \label{Pf_WC:dsze}
    |\zeta'(s)| = \left|\zeta'(0)+\int_0^s\zeta''(\tau)\,d\tau\right| \leq c(1+|s|), \quad s\in\mathbb{R}.
  \end{align}
  By this fact, \eqref{E:Volume}, $T<\infty$, and $\nabla[\zeta'(u)]=\zeta''(u)\nabla u$, we get $\zeta'(u)\in L_{W^{1,p}}^p$.
  Also, let
  \begin{align*}
    \left\{
    \begin{aligned}
      U(X,t) &:= [\phi_{-t}u(t)](X) = u(\Phi_t(X),t), \\
      Z(X,t) &:= \bigl[\phi_{-t}[\zeta(u)](t)\bigr](X) = \zeta\bigl(u(\Phi_t(X),t)\bigr),
    \end{aligned}
    \right. \quad (X,t) \in \Omega_0\times[0,T].
  \end{align*}
  Then, $U\in C([0,T];L^2(\Omega_0))$ by $u\in C_{L^2}$.
  Since $Z=\zeta(U)$, we have
  \begin{align*}
    |Z(t)-Z(t_0)| \leq c(1+|U(t)|+|U(t_0)|)|U(t)-U(t_0)| \quad\text{in}\quad \Omega_0
  \end{align*}
  for all $t,t_0\in[0,T]$ by the mean value theorem for $\zeta$ and \eqref{Pf_WC:dsze}.
  Thus, setting
  \begin{align*}
    c_U := c\Bigl(|\Omega_0|^{1/2}+2\|U\|_{C([0,T];L^2(\Omega_0))}\Bigr) > 0,
  \end{align*}
  we observe by H\"{o}lder's inequality and $U\in C([0,T];L^2(\Omega_0))$ that
  \begin{align*}
    \|Z(t)-Z(t_0)\|_{L^1(\Omega_0)} \leq c_U\|U(t)-U(t_0)\|_{L^2(\Omega_0)} \to 0 \quad\text{as}\quad t\to t_0.
  \end{align*}
  Hence, $Z=\phi_{-(\cdot)}[\zeta(u)](\cdot)\in C([0,T];L^1(\Omega_0))$ and $\zeta(u)\in C_{L^1}$.

  Next, we prove the following claim: if $u_k\to u$ strongly in $L_{W^{1,p}}^p$, then there exists a subsequence of $\{u_k\}$, which is denoted by $\{u_k\}$ again, such that
  \begin{align} \label{Pf_WC:zeta}
    \lim_{k\to\infty}\|\zeta(u)-\zeta(u_k)\|_{L_{L^1}^1} = 0, \quad \lim_{k\to\infty}\|\zeta'(u)-\zeta'(u_k)\|_{L_{W^{1,p}}^p} = 0.
  \end{align}
  By the mean value theorem for $\zeta$ and $\zeta'$, \eqref{Pf_WC:dsze}, and $|\zeta''|\leq c$ on $\mathbb{R}$, we have
  \begin{align*}
    |\zeta(u)-\zeta(u_k)| \leq c(1+|u|+|u_k|)|u-u_k|, \quad |\zeta'(u)-\zeta'(u_k)| \leq c|u-u_k|
  \end{align*}
  in $Q_T$.
  Thus, we see by H\"{o}lder's inequality, $p>2$, and \eqref{E:Volume} that
  \begin{align*}
    \|\zeta(u)-\zeta(u_k)\|_{L_{L^1}^1} &\leq c\Bigl(\|1\|_{L_{L^2}^2}+\|u\|_{L_{L^2}^2}+\|u_k\|_{L_{L^2}^2}\Bigr)\|u-u_k\|_{L_{L^2}^2} \\
    &\leq c_T\Bigl(1+\|u\|_{L_{L^p}^p}+\|u_k\|_{L_{L^p}^p}\Bigr)\|u-u_k\|_{L_{L^p}^p},
  \end{align*}
  where $c_T>0$ is a constant depending only on $T$, and that
  \begin{align*}
    \|\zeta'(u)-\zeta'(u_k)\|_{L_{L^p}^p} \leq c\|u-u_k\|_{L_{L^p}^p}.
  \end{align*}
  Since $u_k\to u$ strongly in $L_{W^{1,p}}^p$, it follows that
  \begin{align*}
    \lim_{k\to\infty}\|\zeta(u)-\zeta(u_k)\|_{L_{L^1}^1} = 0, \quad \lim_{k\to\infty}\|\zeta'(u)-\zeta'(u_k)\|_{L_{L^p}^p} = 0.
  \end{align*}
  Also, since $u_k\to u$ and $\nabla u_k\to\nabla u$ strongly in $L_{L^p}^p=L^p(Q_T)$, we can take a subsequence of $\{u_k\}$, which is denoted by $\{u_k\}$ again, and a function $\varphi\in L^p(Q_T)$ such that
  \begin{align*}
    |\nabla u_k| \leq \varphi, \quad \lim_{k\to\infty}u_k = u, \quad \lim_{k\to\infty}\nabla u_k = \nabla u \quad\text{a.e. in}\quad Q_T
  \end{align*}
  as in the proof of the completeness of $L^p(Q_T)$.
  Then,
  \begin{align*}
    \lim_{k\to\infty}\nabla[\zeta'(u_k)] = \lim_{k\to\infty}\zeta''(u_k)\nabla u_k = \zeta''(u)\nabla u= \nabla[\zeta'(u)] \quad\text{a.e. in}\quad Q_T
  \end{align*}
  by $\zeta\in C^2(\mathbb{R})$.
  Moreover, since $|\zeta''|\leq c$ on $\mathbb{R}$, we have
  \begin{align*}
    |\nabla[\zeta'(u)]-\nabla[\zeta'(u_k)]|^p \leq c(|\nabla u|^p+|\nabla u_k|^p) \leq c(|\nabla u|^p+\varphi^p) \quad\text{a.e. in}\quad Q_T,
  \end{align*}
  where the right-hand side is in $L^1(Q_T)$ and independent of $k$.
  Thus,
  \begin{align*}
    \lim_{k\to\infty}\|\nabla[\zeta'(u)]-\nabla[\zeta'(u_k)]\|_{L_{L^p}^p} = \lim_{k\to\infty}\|\nabla[\zeta'(u)]-\nabla[\zeta'(u_k)]\|_{L^p(Q_T)} = 0
  \end{align*}
  by the dominated convergence theorem, and \eqref{Pf_WC:zeta} is valid.

  Now, Let us show \eqref{E:Wpp_Comp} for $u\in\mathbb{W}^{p,p'}$.
  To this end, it suffices to verify
  \begin{align} \label{Pf_WC:Goal}
    \begin{aligned}
      &\int_0^T\theta'(t)\left(\int_{\Omega_t}[\zeta(u)](t)\,dx\right)\,dt \\
      &\qquad = \int_0^T\theta(t)\left\{\langle\partial^\bullet u(t),[\zeta'(u)](t)\rangle_{W^{1,p}(\Omega_t)}+\int_{\Omega_t}[\zeta(u)\,\mathrm{div}\,\mathbf{v}_\Omega](t)\,dx\right\}\,dt
    \end{aligned}
  \end{align}
  for all $\theta\in C_c^1(0,T)$.
  We prove this in three steps.

  \textbf{Step 1:} let $u\in C^1(\overline{Q_T})$.
  Then, since $\zeta(u)\in C^1(\overline{Q_T})$, we have
  \begin{align*}
    \frac{d}{dt}\int_{\Omega_t}[\zeta(u)](t)\,dx = \int_{\Omega_t}\{[\zeta'(u)\partial^\bullet u](t)+[\zeta(u)\,\mathrm{div}\,\mathbf{v}_\Omega](t)\}\,dx, \quad t\in[0,T]
  \end{align*}
  by the Reynorlds transport theorem and the chain rule.
  Thus, by multiplying both sides by $\theta$ and carrying out integration by parts in time, we find that \eqref{Pf_WC:Goal} holds with $\langle\cdot,\cdot\rangle_{W^{1,p}(\Omega_t)}$ replaced by $(\cdot,\cdot)_{L^2(\Omega_t)}$.

  \textbf{Step 2:} let $u\in C_{W^{1,p}}^1$.
  Since
  \begin{align*}
    U := \phi_{-(\cdot)}u(\cdot) \in C^1([0,T];W^{1,p}(\Omega_0)) \subset W^{1,p}(\Omega_0\times(0,T))
  \end{align*}
  and $\Omega_0\times(0,T)$ is a bounded Lipschitz domain, we can take functions
  \begin{align*}
    U_k \in C^1(\overline{\Omega_0}\times[0,T]) \quad\text{such that}\quad \lim_{k\to\infty}\|U-U_k\|_{W^{1,p}(\Omega_0\times(0,T))} = 0.
  \end{align*}
  Note that the strong convergence in $W^{1,p}(\Omega_0\times(0,T))$ means that
  \begin{align} \label{Pf_WC:S2zero}
    \lim_{k\to\infty}\|U-U_k\|_{L^p(0,T;W^{1,p}(\Omega_0))} = 0, \quad \lim_{k\to\infty}\|\partial_tU-\partial_tU_k\|_{L^p(0,T;L^p(\Omega_0))} = 0.
  \end{align}
  Let $u_k:=\phi_{(\cdot)}U_k(\cdot)\in C^1(\overline{Q_T})$.
  Then, by \eqref{E:LBq_Equi}, $\partial_tU=\phi_{-(\cdot)}[\partial^\bullet u](\cdot)$, and \eqref{Pf_WC:S2zero},
  \begin{align} \label{Pf_WC:S2Ap}
    \lim_{k\to\infty}\|u-u_k\|_{L_{W^{1,p}}^p} = 0, \quad \lim_{k\to\infty}\|\partial^\bullet u-\partial^\bullet u_k\|_{L_{L^p}^p} = 0.
  \end{align}
  Moreover, \eqref{Pf_WC:zeta} holds up to a subsequence.
  By Step 1, $u_k$ satisfies \eqref{Pf_WC:Goal} with $\langle\cdot,\cdot\rangle_{W^{1,p}(\Omega_t)}$ replaced by $(\cdot,\cdot)_{L^2(\Omega_t)}$.
  Thus, we see that the same equality holds for $u$ by letting $k\to\infty$ and using H\"{o}lder's inequality, $p>2$, \eqref{E:Vel_Bd}, \eqref{Pf_WC:zeta}, and \eqref{Pf_WC:S2Ap}.

  \textbf{Step 3:} let $u\in\mathbb{W}^{p,p'}$.
  Since there is an evolving space equivalence between the spaces \eqref{E:EvSE}, we can use \cite[Lemma 3.20]{AlCaDjEl23} to observe that $C_{W^{1,p}}^1$ is dense in $\mathbb{W}^{p,p'}$.
  Thus, we can take functions $w_k\in C_{W^{1,p}}^1$ such that
  \begin{align} \label{Pf_WC:S3}
    \lim_{k\to\infty}\|u-w_k\|_{L_{W^{1,p}}^p} = 0, \quad \lim_{k\to\infty}\|\partial^\bullet u-\partial^\bullet w_k\|_{L_{[W^{1,p}]^\ast}^{p'}} = 0,
  \end{align}
  and \eqref{Pf_WC:zeta} holds with $u_k$ replaced by $w_k$ up to a subsequence.
  Since $w_k$ satisfies \eqref{Pf_WC:Goal} by Step 2, we send $k\to\infty$ and use H\"{o}lder's inequality, $p>2$, \eqref{E:Vel_Bd}, \eqref{Pf_WC:zeta}, and \eqref{Pf_WC:S3} to find that $u$ also satisfies \eqref{Pf_WC:Goal}.
  This completes the proof.
\end{proof}

\subsection{Other basic results} \label{SS:Pre_BaRe}
Let $p\in(2,\infty)$.
We give some other basic results.

\begin{lemma} \label{L:Ine_Vec}
  Let $K\in\mathbb{N}$.
  For all $\mathbf{a},\mathbf{b}\in\mathbb{R}^K$, we have
  \begin{align}
    &(|\mathbf{a}|^{p-2}\mathbf{a}-|\mathbf{b}|^{p-2}\mathbf{b})\cdot(\mathbf{a}-\mathbf{b}) \geq 0, \label{E:Vec_Mono} \\
    &\bigl|\,|\mathbf{a}|^{p-2}\mathbf{a}-|\mathbf{b}|^{p-2}\mathbf{b}\,\bigr| \leq c(|\mathbf{a}|^{p-2}+|\mathbf{b}|^{p-2})|\mathbf{a}-\mathbf{b}|. \label{E:p_Lip}
  \end{align}
\end{lemma}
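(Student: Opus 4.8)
The plan is to treat the two inequalities separately; both reduce to elementary facts about the map $F(\mathbf{x}) := |\mathbf{x}|^{p-2}\mathbf{x}$ on $\mathbb{R}^K$, which is of class $C^1$ because $p>2$ (its differential even vanishes at the origin).

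For \eqref{E:Vec_Mono} I would expand the inner product directly,
\[
  (|\mathbf{a}|^{p-2}\mathbf{a}-|\mathbf{b}|^{p-2}\mathbf{b})\cdot(\mathbf{a}-\mathbf{b}) = |\mathbf{a}|^p+|\mathbf{b}|^p-(|\mathbf{a}|^{p-2}+|\mathbf{b}|^{p-2})\,\mathbf{a}\cdot\mathbf{b},
\]
and then use the Cauchy--Schwarz inequality $\mathbf{a}\cdot\mathbf{b}\leq|\mathbf{a}|\,|\mathbf{b}|$ together with $|\mathbf{a}|^{p-2}+|\mathbf{b}|^{p-2}\geq0$ to bound the right-hand side from below by
\[
  |\mathbf{a}|^p+|\mathbf{b}|^p-(|\mathbf{a}|^{p-2}+|\mathbf{b}|^{p-2})|\mathbf{a}|\,|\mathbf{b}| = (|\mathbf{a}|^{p-1}-|\mathbf{b}|^{p-1})(|\mathbf{a}|-|\mathbf{b}|),
\]
which is nonnegative since $s\mapsto s^{p-1}$ is nondecreasing on $[0,\infty)$. (Equivalently, $F$ is the gradient of the convex function $\tfrac1p|\cdot|^p$, hence a monotone map.)

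For \eqref{E:p_Lip} I would write, using the fundamental theorem of calculus along the segment from $\mathbf{b}$ to $\mathbf{a}$,
\[
  |\mathbf{a}|^{p-2}\mathbf{a}-|\mathbf{b}|^{p-2}\mathbf{b} = \int_0^1 \nabla F\bigl(\mathbf{b}+s(\mathbf{a}-\mathbf{b})\bigr)(\mathbf{a}-\mathbf{b})\,ds,
\]
where $\nabla F(\mathbf{x}) = |\mathbf{x}|^{p-2}I_K+(p-2)|\mathbf{x}|^{p-4}\mathbf{x}\,\mathbf{x}^{\mathrm{T}}$ for $\mathbf{x}\neq\mathbf{0}$ and $\nabla F(\mathbf{0})=0$. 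Since $\nabla F(\mathbf{x})$ is symmetric and positive semidefinite with largest eigenvalue at most $(p-1)|\mathbf{x}|^{p-2}$, and since $|\mathbf{b}+s(\mathbf{a}-\mathbf{b})| = |(1-s)\mathbf{b}+s\mathbf{a}|\leq\max\{|\mathbf{a}|,|\mathbf{b}|\}$ for $s\in[0,1]$ by convexity of the Euclidean norm, taking norms and using the triangle inequality for integrals gives
\[
  \bigl|\,|\mathbf{a}|^{p-2}\mathbf{a}-|\mathbf{b}|^{p-2}\mathbf{b}\,\bigr| \leq (p-1)\max\{|\mathbf{a}|,|\mathbf{b}|\}^{p-2}\,|\mathbf{a}-\mathbf{b}| \leq (p-1)(|\mathbf{a}|^{p-2}+|\mathbf{b}|^{p-2})|\mathbf{a}-\mathbf{b}|,
\]
i.e.\ \eqref{E:p_Lip} with $c=p-1$. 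The only point needing a word of care is the behavior at $\mathbf{0}$, but since $p>2$ the map $F$ is genuinely $C^1$ on all of $\mathbb{R}^K$, so the integral representation and the eigenvalue bound hold without excluding any points; there is no substantive obstacle in this lemma.
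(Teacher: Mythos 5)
Your proof is correct and follows essentially the same route as the paper: the monotonicity \eqref{E:Vec_Mono} via the same expansion and Cauchy--Schwarz, and \eqref{E:p_Lip} via the fundamental theorem of calculus along the segment joining $\mathbf{b}$ to $\mathbf{a}$. Your packaging of the integrand through the $C^1$ regularity of $F(\mathbf{x})=|\mathbf{x}|^{p-2}\mathbf{x}$ and the eigenvalue bound $(p-1)|\mathbf{x}|^{p-2}$ neatly avoids the paper's separate treatment of the point where the segment passes through the origin, and the bound $|(1-s)\mathbf{b}+s\mathbf{a}|\leq\max\{|\mathbf{a}|,|\mathbf{b}|\}$ yields the sharper constant $c=p-1$ instead of the paper's $2^{p-2}(p-1)$, though this makes no difference for the applications.
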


\begin{proof}
  We see by direct calculations and $|\mathbf{a}\cdot\mathbf{b}|\leq|\mathbf{a}|\cdot|\mathbf{b}|$ that
  \begin{align*}
    (|\mathbf{a}|^{p-2}\mathbf{a}-|\mathbf{b}|^{p-2}\mathbf{b})\cdot(\mathbf{a}-\mathbf{b}) \geq (|\mathbf{a}|^{p-1}-|\mathbf{b}|^{p-1})(|\mathbf{a}|-|\mathbf{b}|),
  \end{align*}
  where the right-hand side is nonnegative by $p-1>0$.
  Thus, \eqref{E:Vec_Mono} is valid.

  Next, we prove \eqref{E:p_Lip}.
  We first observe that
  \begin{align} \label{Pf_IV:ab}
    (a+b)^\alpha \leq \max\{(2a)^\alpha,(2b)^\alpha\} \leq 2^\alpha(a^\alpha+b^\alpha), \quad a,b,\alpha>0.
  \end{align}
  Let $\mathbf{x}(s):=s\mathbf{a}+(1-s)\mathbf{b}$ for $s\in\mathbb{R}$.
  Since $\mathbf{x}(1)=\mathbf{a}$ and $\mathbf{x}(0)=\mathbf{b}$,
  \begin{align} \label{Pf_IV:Int}
    |\mathbf{a}|^{p-2}\mathbf{a}-|\mathbf{b}|^{p-2}\mathbf{b} = \int_0^1\frac{d}{ds}\Bigl(|\mathbf{x}(s)|^{p-2}\mathbf{x}(s)\Bigr)\,ds.
  \end{align}
  When $\mathbf{x}(s)\neq\mathbf{0}$,
  \begin{align*}
    \frac{d}{ds}\Bigl(|\mathbf{x}(s)|^{p-2}\mathbf{x}(s)\Bigr) = (p-2)|\mathbf{x}(s)|^{p-4}\{\mathbf{x}(s)\cdot\mathbf{x}'(s)\}\mathbf{x}(s)+|\mathbf{x}(s)|^{p-2}\mathbf{x}'(s).
  \end{align*}
  Thus, by $|\mathbf{x}(s)|\leq|\mathbf{a}|+|\mathbf{b}|$, \eqref{Pf_IV:ab}, and $\mathbf{x}'(s)=\mathbf{a}-\mathbf{b}$,
  \begin{align} \label{Pf_IV:Der}
    \begin{aligned}
      \left|\frac{d}{ds}\Bigl(|\mathbf{x}(s)|^{p-2}\mathbf{x}(s)\Bigr)\right| &\leq (p-1)|\mathbf{x}(s)|^{p-2}|\mathbf{x}'(s)| \\
      &\leq 2^{p-2}(p-1)(|\mathbf{a}|^{p-2}+|\mathbf{b}|^{p-2})|\mathbf{a}-\mathbf{b}|.
    \end{aligned}
  \end{align}
  Also, if $\mathbf{x}(s)=s\mathbf{a}+(1-s)\mathbf{b}=\mathbf{0}$, then $\mathbf{x}(s+h)=h(\mathbf{a}-\mathbf{b})$ and
  \begin{align*}
    \frac{d}{ds}\Bigl(|\mathbf{x}(s)|^{p-2}\mathbf{x}(s)\Bigr) &= \lim_{h\to0}\frac{|\mathbf{x}(s+h)|^{p-2}\mathbf{x}(s+h)-|\mathbf{x}(s)|^{p-2}\mathbf{x}(s)}{h} \\
    &= \lim_{h\to0}|h(\mathbf{a}-\mathbf{b})|^{p-2}(\mathbf{a}-\mathbf{b}) = \mathbf{0},
  \end{align*}
  and \eqref{Pf_IV:Der} also holds in this case.
  Thus, applying \eqref{Pf_IV:Der} to \eqref{Pf_IV:Int}, we get \eqref{E:p_Lip}.
\end{proof}

\begin{lemma} \label{L:Lp_L2}
  Let $t\in[0,T]$ and $u\in L^p(\Omega_t)$.
  Then, for any $\delta>0$,
  \begin{align} \label{E:Lp_L2}
    \|u\|_{L^2(\Omega_t)}^2 \leq \delta\|u\|_{L^p(\Omega_t)}^p+c_\delta,
  \end{align}
  where $c_\delta>0$ is a constant depending only on $\delta$ (and $p$).
\end{lemma}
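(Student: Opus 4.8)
The plan is to reduce the inequality to a pointwise elementary estimate and then integrate, using the uniform volume bound \eqref{E:Volume} to keep all constants independent of $t$. The starting observation is that for any real $s\geq0$ and any $p>2$, Young's inequality with the conjugate exponents $r=p/2$ and $r'=p/(p-2)$ (both finite and $>1$ since $p>2$) yields, after writing $s^2=(\mu^{2/p}s^2)\cdot\mu^{-2/p}$ for a parameter $\mu>0$,
\begin{align*}
  s^2 \leq \frac{2\mu}{p}\,s^p + \frac{p-2}{p}\,\mu^{-2/(p-2)}.
\end{align*}
Choosing $\mu=p\delta/2$ gives $s^2\leq \delta s^p+\tilde c_\delta$ with $\tilde c_\delta:=\frac{p-2}{p}(p\delta/2)^{-2/(p-2)}$, a constant depending only on $\delta$ and $p$.

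Next I would apply this with $s=|u(x)|$ for $x\in\Omega_t$ and integrate over $\Omega_t$:
\begin{align*}
  \|u\|_{L^2(\Omega_t)}^2 = \int_{\Omega_t}|u|^2\,dx \leq \delta\int_{\Omega_t}|u|^p\,dx + \tilde c_\delta|\Omega_t| = \delta\|u\|_{L^p(\Omega_t)}^p + \tilde c_\delta|\Omega_t|.
\end{align*}
By \eqref{E:Volume} we have $|\Omega_t|\leq c|\Omega_0|$ for all $t\in[0,T]$, so \eqref{E:Lp_L2} follows with $c_\delta:=\tilde c_\delta\,c|\Omega_0|$, which indeed depends only on $\delta$ and $p$ (and on the fixed domain $\Omega_0$) but not on $t$ or $u$. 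An essentially equivalent route is to first apply H\"older's inequality to get $\|u\|_{L^2(\Omega_t)}^2\leq|\Omega_t|^{1-2/p}\|u\|_{L^p(\Omega_t)}^2$, bound $|\Omega_t|^{1-2/p}$ by a $t$-independent constant via \eqref{E:Volume}, and then run the same Young's inequality on the power $\|u\|_{L^p(\Omega_t)}^2$.

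There is no genuine obstacle in this argument; it is a routine interpolation estimate on a bounded domain. The only point that deserves a word is the uniformity of $c_\delta$ in $t$: since $\Omega_t$ varies with $t$, one must make sure the constant arising from the embedding $L^p(\Omega_t)\hookrightarrow L^2(\Omega_t)$ (equivalently, the volume factor $|\Omega_t|$) is controlled uniformly, which is exactly what \eqref{E:Volume} provides.
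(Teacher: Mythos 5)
Your argument is correct. The difference from the paper's proof is mostly cosmetic but worth noting: the paper first applies H\"older's inequality on $\Omega_t$ to bound $\|u\|_{L^2(\Omega_t)}^2$ by $\left(\int_{\Omega_t}|u|^p\right)^{2/p}|\Omega_t|^{1/q}$ and then invokes Young's inequality at the level of these integral quantities, whereas your primary route applies Young's inequality pointwise to $|u(x)|^2$ versus $|u(x)|^p$ and only then integrates, picking up the factor $|\Omega_t|$ from the constant term. Both are elementary and both reduce the $t$-dependence of the constant to the volume bound \eqref{E:Volume}, which you correctly identify as the only point requiring care. Your secondary remark (H\"older first, then Young on $\|u\|_{L^p(\Omega_t)}^2$) is in fact exactly the paper's proof. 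The pointwise route has the small advantage of making the explicit dependence of $c_\delta$ on $\delta$ and $p$ transparent, namely $c_\delta=\tfrac{p-2}{p}(p\delta/2)^{-2/(p-2)}\sup_t|\Omega_t|$; the paper's route leaves that implicit after the substitution $\delta=2\sigma/p$. Nothing to fix.
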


\begin{proof}
  Let $\sigma>0$.
  Since $p>2$, we can take a $q>1$ such that $2/p+1/q=1$.
  Then,
  \begin{align*}
    \int_{\Omega_t}|u|^2\,dx &\leq \left(\int_{\Omega_t}|u|^p\,dx\right)^{2/p}|\Omega_t|^{1/q} = \left(\sigma\int_{\Omega_t}|u|^p\,dx\right)^{2/p}\cdot\sigma^{-2/p}|\Omega_t|^{1/q} \\
    &\leq \frac{2\sigma}{p}\int_{\Omega_t}|u|^p\,dx+\frac{\sigma^{-2q/p}|\Omega_t|}{q}
  \end{align*}
  by H\"{o}lder's and Young's inequality.
  Replacing $2\sigma/p$ by $\delta$ and using \eqref{E:Volume}, we get \eqref{E:Lp_L2}.
\end{proof}

Next, we prove the uniform-in-time Poincar\'{e} inequality on $\Omega_t$.

\begin{lemma} \label{L:Un_Poin}
  Let $q\in[1,\infty]$.
  There exists a constant $c>0$ such that
  \begin{align} \label{E:Un_Poin}
    \|u\|_{L^q(\Omega_t)} \leq c\|\nabla u\|_{L^q(\Omega_t)}
  \end{align}
  for all $t\in[0,T]$ and $u\in W^{1,q}(\Omega_t)$ satisfying $\int_{\Omega_t}u\,dx=0$.
\end{lemma}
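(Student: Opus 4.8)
The plan is to reduce \eqref{E:Un_Poin} to the classical Poincar\'{e}--Wirtinger inequality on the \emph{fixed} reference domain $\Omega_0$ via the change of variables $x=\Phi_t(X)$. Given $u\in W^{1,q}(\Omega_t)$ with $\int_{\Omega_t}u\,dx=0$, I would set $U:=\phi_{-t}u=u\circ\Phi_t\in W^{1,q}(\Omega_0)$ and write $\overline{U}:=|\Omega_0|^{-1}\int_{\Omega_0}U\,dX$ for its mean over $\Omega_0$. Since $\Omega_0$ is a bounded (connected) $C^2$ domain, the Poincar\'{e}--Wirtinger inequality $\|U-\overline{U}\|_{L^q(\Omega_0)}\leq c\|\nabla U\|_{L^q(\Omega_0)}$ holds for every $q\in[1,\infty]$ with a constant depending only on $\Omega_0$ and $q$, hence independent of $t$. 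The point is then to control $\|U\|_{L^q(\Omega_0)}$ by $\|U-\overline{U}\|_{L^q(\Omega_0)}$, for which it suffices to bound $|\overline{U}|$.

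The only genuine subtlety is that the vanishing-mean constraint is imposed on $\Omega_t$, not on $\Omega_0$: by the change of variables and $J_t=\det\nabla\Phi_t$, one has $0=\int_{\Omega_t}u\,dx=\int_{\Omega_0}UJ_t\,dX$, so $\overline{U}\,|\Omega_t|=\overline{U}\int_{\Omega_0}J_t\,dX=-\int_{\Omega_0}(U-\overline{U})J_t\,dX$. Using \eqref{E:Det_Bd}, \eqref{E:Volume}, and H\"{o}lder's inequality with the conjugate exponent $q'$ (reading $1/q'=0$ when $q=1$), I would estimate $|\overline{U}|\leq c_1|\Omega_t|^{-1}\|U-\overline{U}\|_{L^1(\Omega_0)}\leq c|\Omega_0|^{1/q'-1}\|U-\overline{U}\|_{L^q(\Omega_0)}$, and then, multiplying by $|\Omega_0|^{1/q}$ and using $1/q+1/q'=1$, obtain $\|\overline{U}\|_{L^q(\Omega_0)}=|\overline{U}|\,|\Omega_0|^{1/q}\leq c\|U-\overline{U}\|_{L^q(\Omega_0)}$ with $c$ independent of $t$. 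Combining with the triangle inequality and Poincar\'{e}--Wirtinger on $\Omega_0$ gives $\|U\|_{L^q(\Omega_0)}\leq c\|\nabla U\|_{L^q(\Omega_0)}$.

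It then remains to transfer this back to $\Omega_t$. From $\|u\|_{L^q(\Omega_t)}^q=\int_{\Omega_0}|U|^qJ_t\,dX$ and \eqref{E:Det_Bd} I get $\|u\|_{L^q(\Omega_t)}\leq c\|U\|_{L^q(\Omega_0)}$; from $\int_{\Omega_0}|\nabla U|^q\,dX=\int_{\Omega_t}|(\nabla U)\circ\Phi_t^{-1}|^qJ_t^{-1}\,dx$ together with \eqref{E:Det_Bd} and \eqref{E:Grad_Bd} I get $\|\nabla U\|_{L^q(\Omega_0)}\leq c\|\nabla u\|_{L^q(\Omega_t)}$, with the case $q=\infty$ handled directly by the pointwise bounds on $J_t$ and the identity $\nabla U=\nabla\Phi_t[\nabla u\circ\Phi_t]$. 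Chaining these three estimates yields \eqref{E:Un_Poin} with a constant independent of $t$, since $c_0,c_1,c_2$, $|\Omega_0|$, and the Poincar\'{e}--Wirtinger constant on $\Omega_0$ are all $t$-independent. There is no deep obstacle here; the only thing to be careful about is precisely that the mean-zero condition does not survive the pullback, which is why the mean correction $\overline{U}$ and its bound via the Jacobian are needed, and one should keep the H\"{o}lder-exponent bookkeeping uniform in $q\in[1,\infty]$, including the endpoints $q=1,\infty$.
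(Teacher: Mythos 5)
Your proof is correct, but it takes a genuinely different route from the paper. The paper argues by compactness and contradiction: it supposes the inequality fails along a sequence $(t_k,u_k)$, pulls back to $\Omega_0$, normalizes, extracts an $L^q$-convergent subsequence via the Rellich embedding $W^{1,q}(\Omega_0)\hookrightarrow L^q(\Omega_0)$, and shows the limit is a constant that must vanish because of the $J_{t_k}$-weighted mean-zero condition, contradicting the normalization. You instead give a direct, quantitative argument: pull $u$ back to $U=u\circ\Phi_t$, apply the Poincar\'{e}--Wirtinger inequality on the fixed domain $\Omega_0$, and control the (nonzero) mean $\overline{U}$ by exploiting $\int_{\Omega_0}UJ_t\,dX=0$ together with \eqref{E:Det_Bd}, \eqref{E:Volume}, and H\"{o}lder, so that $\|\overline{U}\|_{L^q(\Omega_0)}\leq c\|U-\overline{U}\|_{L^q(\Omega_0)}$ with $c$ independent of $t$ and $q$; then transfer back via \eqref{E:Det_Bd} and \eqref{E:Grad_Bd}. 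Both approaches have to face the same subtlety --- that the pullback destroys the mean-zero condition on $\Omega_0$ and replaces it with a $J_t$-weighted one --- and both handle it correctly. What each buys: your argument is constructive and yields an explicit constant depending only on the Poincar\'{e}--Wirtinger constant of $\Omega_0$ and on $c_0,c_1,c_2,|\Omega_0|$, and it avoids invoking compact Sobolev embeddings (which for the paper's method require a small extra observation at $q=\infty$); the paper's contradiction argument is shorter to write and needs no explicit bookkeeping of H\"{o}lder exponents, at the cost of being nonconstructive. Your remark about reading $1/q'=0$ at $q=1$ and handling $q=\infty$ directly via the pointwise Jacobian bounds is exactly the care needed at the endpoints.
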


\begin{proof}
  We show \eqref{E:Un_Poin} by contradiction.
  Assume to the contrary that for each $k\in\mathbb{N}$ there exist some $t_k\in[0,T]$ and $u_k\in W^{1,q}(\Omega_{t_k})$ such that
  \begin{align*}
    \int_{\Omega_{t_k}}u_k(x)\,dx = 0, \quad \|u_k\|_{L^q(\Omega_{t_k})} > k\|\nabla u_k\|_{L^q(\Omega_{t_k})}.
  \end{align*}
  Let $U_k:=\phi_{-t_k}u_k=u_k\circ\Phi_{t_k}$ on $\Omega_0$.
  Then, we see by \eqref{E:Det_Bd} and \eqref{E:Grad_Bd} that
  \begin{align*}
    \int_{\Omega_0}U_k(X)J_{t_k}(X)\,dX = 0, \quad \|U_k\|_{L^q(\Omega_0)} > ck\|\nabla U_k\|_{L^q(\Omega_0)}.
  \end{align*}
  Since $\|U_k\|_{L^q(\Omega_0)}\neq0$, we can replace $U_k$ by $U_k/\|U_k\|_{L^q(\Omega_0)}$ to assume that
  \begin{align} \label{Pf_UP:Uk}
    \int_{\Omega_0}U_k(X)J_{t_k}(X)\,dX = 0, \quad \|U_k\|_{L^q(\Omega_0)} = 1, \quad \|\nabla U_k\|_{L^q(\Omega_0)} < \frac{1}{ck}.
  \end{align}
  Thus, $\{U_k\}_k$ is bounded in $W^{1,q}(\Omega_0)$.
  Since the embedding $W^{1,q}(\Omega_0)\hookrightarrow L^q(\Omega_0)$ is compact, and since $\{t_k\}_k$ is a sequence in the finite interval $[0,T]$, it follows that
  \begin{align} \label{Pf_UP:Conv}
    \lim_{k\to\infty}t_k = t_\infty, \quad \lim_{k\to\infty}U_k = U_\infty \quad\text{strongly in $L^q(\Omega_0)$}
  \end{align}
  up to subsequences with some $t_\infty\in[0,T]$ and $U_\infty\in L^q(\Omega_0)$.
  Then, for all $\psi\in\mathcal{D}(\Omega_0)$ and $i=1,\dots,n$, we observe by H\"{o}lder's inequality, \eqref{Pf_UP:Uk}, and \eqref{Pf_UP:Conv} that
  \begin{align*}
    (U_\infty,\partial_i\psi)_{L^2(\Omega_0)} = \lim_{k\to\infty}(U_k,\partial_i\psi)_{L^2(\Omega_0)} = -\lim_{k\to\infty}(\partial_iU_k,\psi)_{L^2(\Omega_0)} = 0.
  \end{align*}
  Thus, $\nabla U_\infty=0$ on $\Omega_0$ and $U_\infty$ is constant on $\Omega_0$.
  Moreover, noting that $J_{(\cdot)}$ is continuous on $\overline{\Omega_0}\times[0,T]$ by Assumption \ref{A:Domain}, we deduce from \eqref{Pf_UP:Uk} and \eqref{Pf_UP:Conv} that
  \begin{align*}
    U_\infty\int_{\Omega_0}J_{t_\infty}(X)\,dX = \int_{\Omega_0}U_\infty J_{t_\infty}(X)\,dX = \lim_{k\to\infty}\int_{\Omega_0}U_k(X)J_{t_k}(X)\,dX = 0.
  \end{align*}
  By this result and \eqref{E:Det_Bd}, we find that $U_\infty=0$, which contradicts
  \begin{align*}
    \|U_\infty\|_{L^q(\Omega_0)} = \lim_{k\to\infty}\|U_k\|_{L^q(\Omega_0)} = 1.
  \end{align*}
  Therefore, \eqref{E:Un_Poin} is valid.
\end{proof}

\begin{lemma} \label{L:Cor_UP}
  Let $q\in[1,\infty]$.
  There exists a constant $c>0$ such that
  \begin{align} \label{E:Cor_UP}
    \|u\|_{L^q(\Omega_t)} \leq c\Bigl(\|\nabla u\|_{L^q(\Omega_t)}+|(u,1)_{L^2(\Omega_t)}|\Bigr)
  \end{align}
  for all $t\in[0,T]$ and $u\in W^{1,q}(\Omega_t)$.
\end{lemma}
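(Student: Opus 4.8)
The plan is to derive \eqref{E:Cor_UP} from the uniform-in-time Poincaré inequality of Lemma \ref{L:Un_Poin} by a standard decomposition into a mean-zero part and a constant part, while keeping careful track of the fact that $|\Omega_t|$ is bounded above and below uniformly in $t$ by \eqref{E:Volume}. Concretely, for $u\in W^{1,q}(\Omega_t)$ I would set
\begin{align*}
  \bar{u} := \frac{1}{|\Omega_t|}\int_{\Omega_t}u\,dx = \frac{(u,1)_{L^2(\Omega_t)}}{|\Omega_t|}, \quad u = (u-\bar{u})+\bar{u},
\end{align*}
so that $u-\bar{u}\in W^{1,q}(\Omega_t)$ has zero mean on $\Omega_t$ and $\nabla(u-\bar{u})=\nabla u$.

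Applying Lemma \ref{L:Un_Poin} to $u-\bar{u}$ gives $\|u-\bar{u}\|_{L^q(\Omega_t)}\leq c\|\nabla u\|_{L^q(\Omega_t)}$ with $c$ independent of $t$. For the constant part, since $\bar{u}$ is a constant on $\Omega_t$ we have $\|\bar{u}\|_{L^q(\Omega_t)} = |\bar{u}|\,|\Omega_t|^{1/q}$ (with the obvious modification $|\bar{u}|$ when $q=\infty$), and by the definition of $\bar u$ together with the two-sided bound \eqref{E:Volume} this is at most $c|(u,1)_{L^2(\Omega_t)}|$ for a constant $c$ depending only on $|\Omega_0|$ and $q$, hence independent of $t$. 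Then the triangle inequality $\|u\|_{L^q(\Omega_t)}\leq\|u-\bar{u}\|_{L^q(\Omega_t)}+\|\bar{u}\|_{L^q(\Omega_t)}$ yields \eqref{E:Cor_UP}.

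The only mildly delicate points — and the places where one must not be sloppy — are (i) ensuring the constant $c$ genuinely does not depend on $t$: this is exactly where \eqref{E:Volume} is used, both to bound $|\Omega_t|^{1/q}$ from above and to bound $1/|\Omega_t|$ from above; and (ii) handling the endpoint cases $q=1$ and $q=\infty$ in the estimate $\|\bar u\|_{L^q(\Omega_t)} = |\bar u|\,|\Omega_t|^{1/q}$, which is routine. There is no real obstacle here; the lemma is a direct corollary of Lemma \ref{L:Un_Poin}, as its name and placement suggest, and the proof is short.
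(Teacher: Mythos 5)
Your proof is correct and essentially identical to the paper's: both subtract the mean, apply the uniform Poincar\'{e} inequality (Lemma \ref{L:Un_Poin}) to the mean-zero part, and control the constant part via the two-sided volume bound \eqref{E:Volume}.
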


\begin{proof}
  Let $c_u:=|\Omega_t|^{-1}(u,1)_{L^2(\Omega_t)}$ and $v:=u-c_u$ on $\Omega_t$.
  Then,
  \begin{align*}
    \|v\|_{L^q(\Omega_t)} \leq c\|\nabla v\|_{L^q(\Omega_t)} = c\|\nabla u\|_{L^q(\Omega_t)}
  \end{align*}
  by \eqref{E:Un_Poin}, since $v\in W^{1,q}(\Omega_t)$ and $\int_{\Omega_t}v\,dx=0$.
  Hence,
  \begin{align*}
    \|u\|_{L^q(\Omega_t)} &\leq \|v\|_{L^q(\Omega_t)}+\|c_u\|_{L^q(\Omega_t)} = \|v\|_{L^q(\Omega_t)}+|\Omega_t|^{1/q}|c_u| \\
    &\leq c\|\nabla u\|_{L^q(\Omega_t)}+|\Omega_t|^{-1+1/q}|(u,1)_{L^2(\Omega_t)}|,
  \end{align*}
  and we apply \eqref{E:Volume} to the last term to obtain \eqref{E:Cor_UP}.
\end{proof}

It is well known (see e.g. \cite[Lemma 0.5]{Tes14}) that any subset of a separable metric space is again separable.
Also, the following statement holds, which is useful when we take basis functions in the Galerkin method (see Lemma \ref{L:Basis}).

\begin{lemma} \label{L:Separa}
  Let $(\mathcal{M},d_{\mathcal{M}})$ be a separable metric space and $\mathcal{A}$ be a dense subset of $\mathcal{M}$.
  Then, there exists a countable subset $\mathcal{A}_0$ of $\mathcal{A}$ that is also dense in $\mathcal{M}$.
\end{lemma}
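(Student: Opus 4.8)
The plan is to reduce the claim to the standard fact that a separable metric space is itself second countable (has a countable base), and then pick one point of $\mathcal{A}$ from each base element it meets. First I would invoke the hypothesis that $\mathcal{M}$ is separable to fix a countable dense subset $\{m_j\}_{j\in\mathbb{N}}$ of $\mathcal{M}$, and consider the countable collection of open balls $\mathcal{B} := \{B(m_j,1/k) \mid j,k\in\mathbb{N}\}$, where $B(x,r)$ denotes the open ball of radius $r$ centred at $x$ in the metric $d_{\mathcal{M}}$. A routine check shows $\mathcal{B}$ is a base for the topology of $\mathcal{M}$: given any $x\in\mathcal{M}$ and $r>0$, choose $k$ with $2/k<r$ and $j$ with $d_{\mathcal{M}}(x,m_j)<1/k$, so that $x\in B(m_j,1/k)\subset B(x,r)$.

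Next I would use this countable base to extract the desired subset of $\mathcal{A}$. For each ball $B\in\mathcal{B}$ with $B\cap\mathcal{A}\neq\emptyset$, choose (by countable choice, which suffices here since $\mathcal{B}$ is countable) a point $a_B\in B\cap\mathcal{A}$, and set $\mathcal{A}_0 := \{a_B \mid B\in\mathcal{B},\ B\cap\mathcal{A}\neq\emptyset\}$. Then $\mathcal{A}_0$ is a countable subset of $\mathcal{A}$, since $\mathcal{B}$ is countable. It remains to check density of $\mathcal{A}_0$ in $\mathcal{M}$: let $x\in\mathcal{M}$ and $\varepsilon>0$. Since $\mathcal{B}$ is a base, there is $B\in\mathcal{B}$ with $x\in B\subset B(x,\varepsilon)$. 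Because $\mathcal{A}$ is dense in $\mathcal{M}$ and $B$ is a nonempty open set, $B\cap\mathcal{A}\neq\emptyset$, so the point $a_B\in\mathcal{A}_0$ is defined and lies in $B\subset B(x,\varepsilon)$; hence $d_{\mathcal{M}}(x,a_B)<\varepsilon$. This shows every point of $\mathcal{M}$ is approximated by points of $\mathcal{A}_0$, i.e. $\mathcal{A}_0$ is dense, completing the proof.

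There is no real obstacle here; the only point requiring a little care is the verification that $\mathcal{B}$ forms a base (equivalently, that $\mathcal{M}$ is second countable), which rests on separability of $\mathcal{M}$ together with the metric structure, and the observation that $\mathcal{A}$, being dense, meets every nonempty open set. One could alternatively shortcut the argument by first applying the cited fact \cite[Lemma 0.5]{Tes14} that $\mathcal{A}$ itself is separable (as a subset of the separable metric space $\mathcal{M}$), take a countable dense $\mathcal{A}_0\subseteq\mathcal{A}$ in the subspace topology, and then check $\mathcal{A}_0$ is dense in $\mathcal{M}$ using transitivity of density (a dense subset of a dense subspace is dense in the whole space); this is perhaps the cleanest route and I would present it this way if brevity is preferred.
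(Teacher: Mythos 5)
Your argument is correct and is essentially the same construction the paper uses: both take the countable family of balls $B(x_n,1/k)$ around a countable dense set, pick one point of $\mathcal{A}$ from each such ball that meets $\mathcal{A}$, and conclude density by a triangle-inequality estimate. The phrasing via a countable base is just a light repackaging of the paper's direct $3\varepsilon$-argument, and the shortcut you mention (separability of $\mathcal{A}$ plus transitivity of density) is also valid.
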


This seems to be also well known and the proof is very similar to that of the separability of a subset of a separable space.
We give the proof below for the reader's convenience.

\begin{proof}
  Let $\mathcal{M}_0=\{x_1,x_2,\dots\}$ be a countable dense subset of $\mathcal{M}$, and let
  \begin{align*}
    B_r(x) &:= \{y\in\mathcal{M} \mid d_{\mathcal{M}}(y,x)<r\}, \quad x\in\mathcal{M}, \quad r>0, \\
    \mathbb{S} &:= \{(n,k)\in\mathbb{N}^2 \mid B_{1/k}(x_n)\cap\mathcal{A} \neq \emptyset\}.
  \end{align*}
  If $(n,k)\in\mathbb{S}$, we take any $y_{n,k}\in B_{1/k}(x_n)\cap\mathcal{A}$ and set $\mathcal{A}_0:=\{y_{n,k}\mid(n,k)\in\mathbb{S}\}$.
  Clearly, $\mathcal{A}_0$ is a countable subset of $\mathcal{A}$.
  Let us show that $\mathcal{A}_0$ is dense in $\mathcal{M}$.
  Take any $x\in\mathcal{M}$ and $\varepsilon>0$.
  We fix a sufficiently large $k\in\mathbb{N}$ such that $1/k<\varepsilon/3$.
  Since $\mathcal{A}$ is dense in $\mathcal{M}$, there exists some $a\in\mathcal{A}$ such that $d_{\mathcal{M}}(x,a)<1/k$.
  Next, since $\mathcal{M}_0=\{x_1,x_2,\dots\}$ is dense in $\mathcal{M}$, we can take some $n\in\mathbb{N}$ such that
  \begin{align*}
    d_{\mathcal{M}}(a,x_n) < \frac{1}{k}, \quad\text{i.e.,}\quad a \in B_{1/k}(x_n)\cap\mathcal{A}, \quad B_{1/k}(x_n)\cap\mathcal{A} \neq\emptyset.
  \end{align*}
  Hence, $(n,k)\in\mathbb{S}$ and $y_{n,k}\in\mathcal{A}_0$ exists (note that $a\neq y_{n,k}$ in general).
  We find that
  \begin{align*}
    d_{\mathcal{M}}(x,y_{n,k}) \leq d_{\mathcal{M}}(x,a)+d_{\mathcal{M}}(a,x_n)+d_{\mathcal{M}}(x_n,y_{n,k}) < \frac{3}{k} < \varepsilon
  \end{align*}
  by the above inequalities and $y_{n,k}\in B_{1/k}(x_n)\cap\mathcal{A}$.
  Hence, $\mathcal{A}_0$ is dense in $\mathcal{M}$.
\end{proof}

We will also use the next density result on functions with vanishing mean.

\begin{lemma} \label{L:VM_Den}
  Let $\Omega$ be a bounded Lipschitz domain in $\mathbb{R}^n$ and $q\in[1,\infty)$, and let
  \begin{align} \label{E:Def_VM}
    \begin{aligned}
      W_{\mathrm{vm}}^{1,q}(\Omega) &:= \{u\in W^{1,q}(\Omega) \mid (u,1)_{L^2(\Omega)} = 0\}, \\
      C_{\mathrm{vm}}^\infty(\overline{\Omega}) &:= \{u\in C^\infty(\overline{\Omega}) \mid (u,1)_{L^2(\Omega)} = 0\}.
    \end{aligned}
  \end{align}
  Then, $C_{\mathrm{vm}}^\infty(\overline{\Omega})$ is dense in $W_{\mathrm{vm}}^{1,q}(\Omega)$ with respect to the $W^{1,q}(\Omega)$-norm.
\end{lemma}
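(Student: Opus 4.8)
The plan is to prove the density statement by combining the standard density of $C^\infty(\overline{\Omega})$ in $W^{1,q}(\Omega)$ with a small correction that restores the vanishing-mean constraint. Fix $u\in W_{\mathrm{vm}}^{1,q}(\Omega)$. Since $\Omega$ is a bounded Lipschitz domain, $C^\infty(\overline{\Omega})$ is dense in $W^{1,q}(\Omega)$, so there is a sequence $\{v_j\}_j\subset C^\infty(\overline{\Omega})$ with $v_j\to u$ in $W^{1,q}(\Omega)$. These $v_j$ need not have vanishing mean, but their means are small: setting $m_j := |\Omega|^{-1}(v_j,1)_{L^2(\Omega)}$, we have $m_j = |\Omega|^{-1}(v_j-u,1)_{L^2(\Omega)}$ because $(u,1)_{L^2(\Omega)}=0$, so by H\"older's inequality $|m_j|\leq |\Omega|^{-1}\|v_j-u\|_{L^1(\Omega)}\leq |\Omega|^{-1/q'}\|v_j-u\|_{L^q(\Omega)}\to0$ as $j\to\infty$.

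Next I would define the corrected approximants $w_j := v_j - m_j$ on $\overline{\Omega}$. Each $w_j$ is in $C^\infty(\overline{\Omega})$ since $m_j$ is a constant, and $(w_j,1)_{L^2(\Omega)} = (v_j,1)_{L^2(\Omega)} - m_j|\Omega| = 0$ by the choice of $m_j$, so $w_j\in C_{\mathrm{vm}}^\infty(\overline{\Omega})$. It remains to check convergence in the $W^{1,q}(\Omega)$-norm: since $\nabla w_j=\nabla v_j$ and $w_j-u = (v_j-u)-m_j$, we get
\begin{align*}
  \|w_j-u\|_{W^{1,q}(\Omega)} \leq \|v_j-u\|_{W^{1,q}(\Omega)} + |m_j|\,|\Omega|^{1/q},
\end{align*}
and both terms on the right tend to $0$ as $j\to\infty$. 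Hence $w_j\to u$ in $W^{1,q}(\Omega)$ with $w_j\in C_{\mathrm{vm}}^\infty(\overline{\Omega})$, which proves the density claim.

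There is no real obstacle here: the only point requiring the Lipschitz hypothesis is the classical density of smooth functions up to the boundary in $W^{1,q}(\Omega)$, which is a standard result (it would fail for rough domains). The subtraction of a constant is an isometry on gradients and a controlled perturbation on $L^q$, and the constant being subtracted is exactly the mean error, which is already controlled by the $L^q$ (indeed $L^1$) convergence. So the argument is essentially a two-line correction on top of a textbook fact; the restriction $q<\infty$ is used only so that $C^\infty(\overline{\Omega})$ is dense in $W^{1,q}(\Omega)$ in norm.
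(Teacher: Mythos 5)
Your proof is correct and follows essentially the same approach as the paper: approximate $u$ by smooth $v_j$, subtract off the (small) mean of $v_j$ to land in $C_{\mathrm{vm}}^\infty(\overline{\Omega})$, and observe that the correction is a constant so it leaves the gradient untouched and is controlled in $L^q$ by the mean error. (A minor slip: the H\"older step gives $|m_j|\leq|\Omega|^{-1/q}\|v_j-u\|_{L^q(\Omega)}$ rather than $|\Omega|^{-1/q'}$, but this has no effect since all that is needed is $m_j\to0$.)
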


\begin{proof}
  Let $u\in W_{\mathrm{vm}}^{1,q}(\Omega)$.
  Since $u\in W^{1,q}(\Omega)$, we can take functions
  \begin{align*}
    v_k \in C^\infty(\overline{\Omega}) \quad\text{such that}\quad \lim_{k\to\infty}\|u-v_k\|_{W^{1,q}(\Omega)} = 0,
  \end{align*}
  see e.g. \cite[Theorem 3.22]{AdaFou03}.
  Let $u_k:=v_k-|\Omega|^{-1}(v_k,1)_{L^2(\Omega)}$ on $\overline{\Omega}$, where $|\Omega|$ is the volume of $\Omega$.
  Then, $u_k\in C_{\mathrm{vm}}^\infty(\overline{\Omega})$.
  Moreover, since $(u,1)_{L^2(\Omega)}=0$ by $u\in W_{\mathrm{vm}}^{1,q}(\Omega)$,
  \begin{align*}
    u-u_k = u-v_k-|\Omega|^{-1}(u-v_k,1)_{L^2(\Omega)} \quad\text{on}\quad \Omega.
  \end{align*}
  Noting that $|\Omega|^{-1}(u-v_k,1)_{L^2(\Omega)}$ is a constant, we see that
  \begin{align*}
    \|u-u_k\|_{L^q(\Omega)} \leq \|u-v_k\|_{L^q(\Omega)}+|\Omega|^{-1+1/q}|(u-v_k,1)_{L^2(\Omega)}| \leq 2\|u-v_k\|_{L^q(\Omega)}
  \end{align*}
  by H\"{o}lder's inequality.
  Also, since $\nabla u_k=\nabla v_k$ on $\Omega$, it follows that
  \begin{align*}
    \|u-u_k\|_{W^{1,q}(\Omega)} \leq c\|u-v_k\|_{W^{1,q}(\Omega)} \to 0 \quad\text{as}\quad k\to\infty,
  \end{align*}
  i.e., $u_k\to u$ strongly in $W^{1,q}(\Omega)$.
  Hence, $C_{\mathrm{vm}}^\infty(\overline{\Omega})$ is dense in $W_{\mathrm{vm}}^{1,q}(\Omega)$.
\end{proof}

%%% Section 3 %%%
\section{Definition and uniqueness of a weak solution} \label{S:WF_Uni}
In this section, we give the definition of a weak solution to \eqref{E:pLap_MoDo} and prove the uniqueness of a weak solution.
The proof of the existence is presented in the next section.

Fix $p\in(2,\infty)$.
Let us introduce a weak form of \eqref{E:pLap_MoDo}.
Suppose that $u$ and $f$ are smooth and satisfy \eqref{E:pLap_MoDo}.
Then, for a smooth test function $\psi$,
\begin{align*}
  \int_0^T\left(\int_{\Omega_t}\{\partial_tu-\mathrm{div}(|\nabla u|^{p-2}\nabla u)\}\psi\,dx\right)\,dt = \int_0^T\left(\int_{\Omega_t}f\psi\,dx\right)\,dt.
\end{align*}
Moreover, by integration by parts and the boundary condition of \eqref{E:pLap_MoDo},
\begin{align*}
  \int_{\Omega_t}\{\mathrm{div}(|\nabla u|^{p-2}\nabla u)\}\psi\,dx &= \int_{\partial\Omega_t}\{|\nabla u|^{p-2}\partial_\nu u\}\psi\,d\sigma-\int_{\Omega_t}|\nabla u|^{p-2}\nabla u\cdot\nabla\psi\,dx \\
  &= -\int_{\partial\Omega_t}V_\Omega u\psi\,d\sigma-\int_{\Omega_t}|\nabla u|^{p-2}\nabla u\cdot\nabla\psi\,dx.
\end{align*}
We further observe by $V_\Omega=\mathbf{v}_\Omega\cdot\bm{\nu}$ on $\partial\Omega_t$ and the divergence theorem that
\begin{align*}
  \int_{\partial\Omega_t}V_\Omega u\psi\,d\sigma = \int_{\Omega_t}\mathrm{div}(u\psi\mathbf{v}_\Omega)\,dx = \int_{\Omega_t}\{(\mathbf{v}_\Omega\cdot\nabla u)\psi+u(\mathbf{v}_\Omega\cdot\nabla\psi)+u\psi\,\mathrm{div}\,\mathbf{v}_\Omega\}\,dx.
\end{align*}
From the above relations and $\partial^\bullet u=\partial_tu+\mathbf{v}_\Omega\cdot\nabla u$, we deduce that
\begin{multline*}
  \int_0^T\left(\int_{\Omega_t}\{(\partial^\bullet u)\psi+|\nabla u|^{p-2}\nabla u\cdot\nabla\psi+u(\mathbf{v}_\Omega\cdot\nabla\psi)+u\psi\,\mathrm{div}\,\mathbf{v}_\Omega\}\,dx\right)\,dt \\
  = \int_0^T\left(\int_{\Omega_t}f\psi\,dx\right)\,dt.
\end{multline*}
Based on this formula, we define a weak solution to \eqref{E:pLap_MoDo} as follows.

\begin{definition} \label{D:WS_pLap}
  For given $f\in L_{[W^{1,p}]^\ast}^{p'}$ and $u_0\in L^2(\Omega_0)$, we say that $u$ is a weak solution to \eqref{E:pLap_MoDo} if $u\in\mathbb{W}^{p,p'}$ and it satisfies
  \begin{multline} \label{E:pLap_WF}
    \int_0^T\langle\partial^\bullet u(t),\psi(t)\rangle_{W^{1,p}(\Omega_t)}\,dt+\int_0^T\bigl([|\nabla u|^{p-2}\nabla u](t),\nabla\psi(t)\bigr)_{L^2(\Omega_t)}\,dt \\
    +\int_0^T\bigl(u(t),[\mathbf{v}_\Omega\cdot\nabla\psi+\psi\,\mathrm{div}\,\mathbf{v}_\Omega](t)\bigr)_{L^2(\Omega_t)}\,dt = \int_0^T\langle f(t),\psi(t)\rangle_{W^{1,p}(\Omega_t)}\,dt
  \end{multline}
  for all $\psi\in L_{W^{1,p}}^p$ and the initial condition $u(0)=u_0$ in $L^2(\Omega_0)$.
\end{definition}

\begin{remark} \label{R:WS_pLap}
  The above definition makes sense, since $L_{W^{1,p}}^p\subset L_{L^2}^2$ and
  \begin{align*}
    \left|\int_0^T\bigl([|\nabla u|^{p-2}\nabla u](t),\nabla\psi(t)\bigr)_{L^2(\Omega_t)}\,dt\right| \leq \|\nabla u\|_{L_{L^p}^p}^{p-1}\|\nabla\psi\|_{L_{L^p}^p}
  \end{align*}
  by H\"{o}lder's inequality, and since $u(0)\in L^2(\Omega_0)$ by $u\in\mathbb{W}^{p,p'}\subset C_{L^2}$ (see Lemma \ref{L:Trans}).
\end{remark}

The goal of this and the next sections is to establish the following theorem.

\begin{theorem} \label{T:Exi_Uni}
  For all $f\in L_{[W^{1,p}]^\ast}^{p'}$ and $u_0\in L^2(\Omega_0)$, there exists a unique weak solution to \eqref{E:pLap_MoDo}.
\end{theorem}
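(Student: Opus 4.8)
The plan is to prove uniqueness and existence separately; uniqueness is by a monotonicity argument, while the construction of a solution (Section~\ref{S:Exis}) is a Galerkin scheme with time-dependent basis functions whose real content is a strong-convergence argument based on the uniform-in-time Friedrichs inequality.

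\textbf{Uniqueness.} Let $u_1,u_2\in\mathbb{W}^{p,p'}$ be weak solutions for the same $f,u_0$, and put $w:=u_1-u_2$, so $w(0)=0$ and $w$ satisfies the difference of the two formulations \eqref{E:pLap_WF}. Testing that difference with $\psi\equiv1$ on $(0,s)$ and using Lemma~\ref{L:Trans} together with $\partial^\bullet1=0$ shows $(w(s),1)_{L^2(\Omega_s)}=0$ for all $s$; hence $w(t)$ has vanishing mean on $\Omega_t$ for each $t$ and the uniform Poincar\'e inequality (Lemma~\ref{L:Un_Poin}) applies to it. Next, testing with $\psi=w$ on $(0,s)$ and using Lemma~\ref{L:Trans} with $u_1=u_2=w$ to rewrite $\int_0^s\langle\partial^\bullet w,w\rangle_{W^{1,p}(\Omega_t)}\,dt$, the $\mathrm{div}\,\mathbf{v}_\Omega$ contributions combine, after the divergence theorem, into the single boundary integral $\tfrac12\int_0^s\int_{\partial\Omega_t}V_\Omega w^2\,d\sigma\,dt$, while the $p$-Laplace difference is nonnegative by \eqref{E:Vec_Mono}. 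This gives $\|w(s)\|_{L^2(\Omega_s)}^2\le-\int_0^s\int_{\partial\Omega_t}V_\Omega w^2\,d\sigma\,dt$, and the right-hand side is controlled by $|V_\Omega|\le c$, a trace inequality on the uniformly $C^2$ domains $\Omega_t$, the Poincar\'e inequality (legitimate thanks to the vanishing mean), Young's inequality and \eqref{E:Grad_Bd}; Gr\"onwall's lemma then yields $w\equiv0$.

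\textbf{Existence: the Galerkin scheme and a priori estimates.} Fix, via Lemma~\ref{L:Separa} and the density of $C^\infty(\overline{\Omega_0})$ in $W^{1,p}(\Omega_0)$ (cf.\ Lemma~\ref{L:Basis}), functions $w_k^0\in C^\infty(\overline{\Omega_0})$, $k\ge1$, orthonormal in $L^2(\Omega_0)$, spanning a subspace dense in $W^{1,p}(\Omega_0)$, with $w_1^0$ constant, and set $w_k^t:=\phi_tw_k^0$ on $\Omega_t$. The point of advecting the basis is that $\phi_{-t}w_k^t=w_k^0$ is independent of $t$, so $\partial^\bullet w_k^t=0$. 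Seek $u_N(t)=\sum_{k=1}^Ng_N^k(t)w_k^t$ satisfying the Galerkin system obtained from \eqref{E:pLap_WF} by using the $w_j^t$ ($j=1,\dots,N$) as test functions, with $u_N(0)$ the $L^2(\Omega_0)$-orthogonal projection of $u_0$ onto $\mathrm{span}\{w_1^0,\dots,w_N^0\}$. The mass matrix $(w_k^t,w_j^t)_{L^2(\Omega_t)}=\int_{\Omega_0}w_k^0w_j^0J_t\,dX$ is invertible uniformly in $t$ by \eqref{E:Det_Bd}, so the system is an ODE for $\mathbf{g}_N$ that is solvable on $[0,T]$ once a priori bounds are available, and $\partial^\bullet u_N=\sum_k\dot g_N^kw_k^t$. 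Testing with the constant $w_1^t$ and using Lemma~\ref{L:Trans} with $\partial^\bullet1=0$ gives $(u_N(t),1)_{L^2(\Omega_t)}=(u_0,1)_{L^2(\Omega_0)}+\int_0^t\langle f,1\rangle_{W^{1,p}(\Omega_s)}\,ds$, bounded uniformly in $N,t$; hence by Lemma~\ref{L:Cor_UP} the full $W^{1,p}(\Omega_t)$-norm of $u_N$ is controlled by $\|\nabla u_N\|_{L^p(\Omega_t)}$. Testing with $u_N$, rewriting $\langle\partial^\bullet u_N,u_N\rangle$ by Lemma~\ref{L:Trans}, treating the resulting boundary velocity term by a trace inequality, and using Lemma~\ref{L:Cor_UP}, Young's inequality and Gr\"onwall's lemma, we obtain bounds for $\{u_N\}$ in $L_{L^2}^\infty$ and in $L_{W^{1,p}}^p$, and hence for $\{|\nabla u_N|^{p-2}\nabla u_N\}$ in $L^{p'}(Q_T)^n$, all uniform in $N$. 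Passing to a subsequence, $u_N\rightharpoonup u$ in $L_{W^{1,p}}^p$ and $|\nabla u_N|^{p-2}\nabla u_N\rightharpoonup\bm{\chi}$ in $L^{p'}(Q_T)^n$.

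\textbf{Existence: strong convergence and identification of the limit.} The main obstacle is to identify $\bm{\chi}=|\nabla u|^{p-2}\nabla u$ by a Minty argument, which forces us to pass to the limit in the terms \eqref{E:Intro_NL}; the $e^{-\gamma t}$ device of \cite{AlCaDjEl23} absorbs the first of these but not the second, so instead we prove $u_N\to u$ strongly in $L_{L^2}^2=L^2(Q_T)$. For fixed $k$, the Galerkin system with $\partial^\bullet w_k^t=0$ and Lemma~\ref{L:Trans} give $\tfrac{d}{dt}(u_N,w_k^t)_{L^2(\Omega_t)}=\langle f,w_k^t\rangle-([|\nabla u_N|^{p-2}\nabla u_N],\nabla w_k^t)_{L^2(\Omega_t)}-(u_N,\mathbf{v}_\Omega\cdot\nabla w_k^t)_{L^2(\Omega_t)}$, whose right-hand side is bounded in $L^{p'}(0,T)$ uniformly in $N\ge k$ by the a priori estimates; since also $|(u_N,w_k^t)_{L^2(\Omega_t)}|\le c_k$, a diagonal Arzel\`a--Ascoli argument gives $(u_N(t),w_k^t)_{L^2(\Omega_t)}\to(u(t),w_k^t)_{L^2(\Omega_t)}$ uniformly on $[0,T]$ for every $k$. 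Applying the uniform-in-time Friedrichs inequality of Lemma~\ref{L:Fried} to $u_N(t)-u(t)$, integrating in $t$, and bounding the $H^1$-remainder by the uniform $L_{W^{1,p}}^p$-bound, we get $\limsup_N\|u_N-u\|_{L^2(Q_T)}^2\le c\varepsilon$ for every $\varepsilon>0$, i.e.\ the desired strong convergence. With it, the products in \eqref{E:Intro_NL} converge (strong $L^2$ against weakly convergent gradients), so passing to the limit in the Galerkin equations --- after an integration by parts in time against $\theta\in C_c^\infty(0,T)$, which never involves the time traces $u_N(0),u_N(T)$ --- shows that $u$ solves \eqref{E:pLap_WF} with $\bm{\chi}$ in place of $|\nabla u|^{p-2}\nabla u$; in particular $\partial^\bullet u\in L_{[W^{1,p}]^\ast}^{p'}$, so $u\in\mathbb{W}^{p,p'}$. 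To remove $\bm{\chi}$, write the energy identity for $u_N$ on $(0,s)$, average over $s\in(0,T)$ (so the endpoint term becomes $\tfrac1T\|u_N\|_{L^2(Q_T)}^2$, which converges by the strong convergence), compare with the corresponding identity for $u$, and conclude $\lim_N\int_0^Tw(t)\|\nabla u_N(t)\|_{L^p(\Omega_t)}^p\,dt=\int_0^Tw(t)(\bm{\chi},\nabla u)_{L^2(\Omega_t)}\,dt$ with the positive weight $w(t)=1-t/T$. Inserting this into the weighted monotonicity inequality built from \eqref{E:Vec_Mono} and running the standard Minty argument (take test function $u-\lambda\phi$, divide by $\lambda$, let $\lambda\to0^{\pm}$, use \eqref{E:p_Lip}) yields $\int_0^T(\bm{\chi},\nabla\phi)_{L^2(\Omega_t)}\,dt=\int_0^T(|\nabla u|^{p-2}\nabla u,\nabla\phi)_{L^2(\Omega_t)}\,dt$ for all $\phi\in L_{W^{1,p}}^p$, which is exactly what \eqref{E:pLap_WF} requires. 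Finally $u(0)=u_0$ follows by testing the limit and Galerkin equations with test functions not vanishing at $t=0$ and comparing, using $\mathbb{W}^{p,p'}\hookrightarrow C_{L^2}$ from Lemma~\ref{L:Trans}. The difficult point throughout is the strong $L^2(Q_T)$-convergence of $u_N$, where Lemma~\ref{L:Fried} is essential.
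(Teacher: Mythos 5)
Your proposal captures the paper's central strategy — Galerkin with advected basis functions $w_k^t=\phi_tw_k^0$, the uniform-in-time Friedrichs inequality (Lemma~\ref{L:Fried}), strong $L_{L^2}^2$ convergence via Arzel\`a--Ascoli plus Friedrichs, and a monotonicity argument to identify the weak limit of $|\nabla u_N|^{p-2}\nabla u_N$ — and is correct in its broad lines. Two substeps differ genuinely from the paper's proof. For the Minty step, you use the classical Lions integration device: write the energy identity on $(0,s)$, average over $s$, and obtain a weighted inequality with weight $1-t/T$; the time-endpoint term $T^{-1}\|u_N\|_{L^2(Q_T)}^2$ is then handled via the strong convergence. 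The paper's Proposition~\ref{P:Chara} instead works with compactly supported, nonnegative weights $\theta\in\mathcal{D}(0,T)$, multiplies the Galerkin equation by $\theta\alpha_N^k$, and passes to the limit using Lemma~\ref{L:Wpp_Mult}; since $\theta$ vanishes at $t=0,T$ no time trace ever appears, which the paper explicitly advertises as one of the payoffs of proving strong convergence (see the remark preceding Proposition~\ref{P:Chara}). Both routes work, but yours reinstates the very time traces the paper set out to eliminate, so the logical dependence on the strong convergence is slightly different. For uniqueness, you first test with $\psi\equiv1$ to deduce vanishing mean for $w=u_1-u_2$ and then integrate by parts to produce the boundary term $\tfrac12\int_0^s\int_{\partial\Omega_t}V_\Omega w^2\,d\sigma\,dt$, controlled by trace and Poincar\'e inequalities; the paper's Proposition~\ref{P:Uni} does not pass to the boundary at all but estimates the interior terms $(v,\mathbf{v}_\Omega\cdot\nabla v)$ and $(v,v\,\mathrm{div}\,\mathbf{v}_\Omega)$ directly by Cauchy--Schwarz and Young, with no appeal to zero mean. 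Your route is more elaborate than necessary. In both yours and the paper's Gronwall step, a term involving $\|\nabla w\|_{L^2(\Omega_t)}^2$ survives on the right-hand side; you should make explicit how this term is absorbed (e.g.\ by retaining the coercive contribution $c_p\|\nabla w\|_{L^p}^p$ from the strong monotonicity of the $p$-Laplacian for $p\ge 2$) rather than leaving it to ``Young's inequality \dots Gr\"onwall then yields $w\equiv0$'', since a constant remainder from Young's applied as in Lemma~\ref{L:Lp_L2} would spoil the conclusion. Apart from these points the argument, including the uniform Friedrichs estimate, the equicontinuity of $t\mapsto(u_N(t),w_k^t)_{L^2(\Omega_t)}$, and the identification of $u(0)=u_0$, matches the paper.
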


We first prove the uniqueness of a weak solution.

\begin{proposition} \label{P:Uni}
  Suppose that $u_1$ and $u_2$ are weak solutions to \eqref{E:pLap_MoDo} for the same given data $f\in L_{[W^{1,p}]^\ast}^{p'}$ and $u_0\in L^2(\Omega_0)$.
  Then, $u_1=u_2$ a.e. in $Q_T$.
\end{proposition}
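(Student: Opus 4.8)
The plan is to establish uniqueness via a standard monotonicity (energy) argument adapted to the moving-domain setting, using the transport formula \eqref{E:Trans} in place of the usual integration-by-parts in time. Let $w:=u_1-u_2$. Subtracting the weak formulations \eqref{E:pLap_WF} for $u_1$ and $u_2$, the data terms $\langle f,\psi\rangle$ cancel, as does nothing else automatically — the $p$-Laplacian term remains as the difference $\bigl([|\nabla u_1|^{p-2}\nabla u_1 - |\nabla u_2|^{p-2}\nabla u_2](t),\nabla\psi(t)\bigr)_{L^2(\Omega_t)}$, while the lower-order terms combine to $\bigl(w(t),[\mathbf v_\Omega\cdot\nabla\psi + \psi\,\mathrm{div}\,\mathbf v_\Omega](t)\bigr)_{L^2(\Omega_t)}$, and the leading term becomes $\langle\partial^\bullet w(t),\psi(t)\rangle_{W^{1,p}(\Omega_t)}$. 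Since $u_1,u_2\in\mathbb{W}^{p,p'}$ we have $w\in\mathbb{W}^{p,p'}$, so $\partial^\bullet w\in L_{[W^{1,p}]^\ast}^{p'}$ and $w(0)=u_0-u_0=0$ in $L^2(\Omega_0)$.

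Next I would like to test with $\psi=w$. Strictly speaking \eqref{E:pLap_WF} is stated for $\psi\in L_{W^{1,p}}^p$, and $w$ lies in $L_{W^{1,p}}^p$, so this is legitimate. Applying Lemma \ref{L:Trans} (the formula \eqref{E:Trans}) with $u_1=u_2=w$ gives
\begin{align*}
  \frac{d}{dt}\|w(t)\|_{L^2(\Omega_t)}^2 = 2\langle\partial^\bullet w(t),w(t)\rangle_{W^{1,p}(\Omega_t)} + \bigl(w(t),[w\,\mathrm{div}\,\mathbf v_\Omega](t)\bigr)_{L^2(\Omega_t)}
\end{align*}
for a.a.\ $t$. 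Substituting the expression for $\langle\partial^\bullet w(t),w(t)\rangle_{W^{1,p}(\Omega_t)}$ obtained from the subtracted weak form, the monotonicity inequality \eqref{E:Vec_Mono} (applied pointwise with $K=n$, $\mathbf a=\nabla u_1$, $\mathbf b=\nabla u_2$) shows the $p$-Laplacian contribution has a favorable sign and can be dropped, leaving
\begin{align*}
  \frac{d}{dt}\|w(t)\|_{L^2(\Omega_t)}^2 \le -2\bigl(w(t),[\mathbf v_\Omega\cdot\nabla w + w\,\mathrm{div}\,\mathbf v_\Omega](t)\bigr)_{L^2(\Omega_t)} + \bigl(w(t),[w\,\mathrm{div}\,\mathbf v_\Omega](t)\bigr)_{L^2(\Omega_t)}.
\end{align*}
The term $\bigl(w,\mathbf v_\Omega\cdot\nabla w\bigr)_{L^2(\Omega_t)} = \tfrac12\int_{\Omega_t}\mathbf v_\Omega\cdot\nabla(w^2)\,dx$ is handled by integrating by parts: it equals $-\tfrac12\int_{\Omega_t}w^2\,\mathrm{div}\,\mathbf v_\Omega\,dx + \tfrac12\int_{\partial\Omega_t}w^2 V_\Omega\,d\sigma$. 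Here I must be a little careful, because $w(t)\in W^{1,p}(\Omega_t)$ has a well-defined trace on $\partial\Omega_t$ and the boundary integral does not obviously vanish; however, all the boundary contributions from the derivation of \eqref{E:pLap_WF} were already incorporated into the bulk terms $\mathbf v_\Omega\cdot\nabla\psi+\psi\,\mathrm{div}\,\mathbf v_\Omega$, so in fact no boundary term should reappear — the cleaner route is to not integrate by parts again but instead bound everything directly: using \eqref{E:Vel_Bd} we get $|\mathbf v_\Omega|\le c$ and $|\nabla\mathbf v_\Omega|\le c$, hence $|\mathrm{div}\,\mathbf v_\Omega|\le c$, and then by Cauchy–Schwarz the right-hand side above is bounded by $c\|w(t)\|_{L^2(\Omega_t)}\,\|\nabla w(t)\|_{L^2(\Omega_t)} + c\|w(t)\|_{L^2(\Omega_t)}^2$. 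This is where the main obstacle lies: the term $\|w(t)\|_{L^2(\Omega_t)}\|\nabla w(t)\|_{L^2(\Omega_t)}$ involves the gradient of $w$, which cannot be absorbed into $\|w\|_{L^2}^2$ and is not controlled after dropping the monotone term — this is precisely the boundary-induced difficulty advertised in the introduction. The resolution, following \cite[Proposition 7.7]{AlCaDjEl23}, is to not dismiss the monotone term but rather to keep a coercivity bound from below for it and reorganize; alternatively, and more simply for uniqueness, one reintegrates by parts so that the $\mathbf v_\Omega\cdot\nabla w$ term is rewritten purely in terms of $w$ itself without its gradient, which (since the boundary terms genuinely cancel against each other) leaves only terms bounded by $c\|w(t)\|_{L^2(\Omega_t)}^2$.

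Granting that, one arrives at the differential inequality
\begin{align*}
  \frac{d}{dt}\|w(t)\|_{L^2(\Omega_t)}^2 \le c\|w(t)\|_{L^2(\Omega_t)}^2 \quad\text{for a.a.\ } t\in(0,T),
\end{align*}
with $\|w(0)\|_{L^2(\Omega_0)}^2=0$. Since $t\mapsto\|w(t)\|_{L^2(\Omega_t)}^2$ is absolutely continuous on $[0,T]$ (this is part of the content of Lemma \ref{L:Trans}: $\mathbb{W}^{p,p'}\hookrightarrow C_{L^2}$ and the chain-rule formula \eqref{E:Trans} holds in the a.e.\ sense of an AC function), Grönwall's lemma yields $\|w(t)\|_{L^2(\Omega_t)}^2\le e^{ct}\cdot 0 = 0$ for all $t\in[0,T]$. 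Hence $w(t)=0$ in $L^2(\Omega_t)$ for every $t$, i.e.\ $u_1=u_2$ a.e.\ on $Q_T$, which is the claim. I expect the genuinely delicate point to be the bookkeeping of the convective term $\bigl(w,\mathbf v_\Omega\cdot\nabla w\bigr)_{L^2(\Omega_t)}$ — ensuring that upon integrating by parts the boundary contribution $\tfrac12\int_{\partial\Omega_t}w^2V_\Omega\,d\sigma$ is correctly tracked (it in fact cancels, since it was already encoded in the weak form), leaving a clean $\|w\|_{L^2}^2$-only estimate; everything else is routine.
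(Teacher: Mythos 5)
Your overall plan matches the paper's: set $w=u_1-u_2$, test the subtracted weak form with $w$ (the paper uses $\chi_\tau v$, $v=u_1-u_2$, which is the same device), apply the transport identity \eqref{E:Trans}, drop the monotone $p$-Laplace contribution via \eqref{E:Vec_Mono}, and close with Gronwall. You also correctly identify the real difficulty, namely the cross term $(w,\mathbf v_\Omega\cdot\nabla w)_{L^2(\Omega_t)}$.

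The gap is the asserted cancellation. After substituting the weak form into \eqref{E:Trans} and dropping the monotone term you have
\begin{align*}
  \frac{d}{dt}\|w(t)\|_{L^2(\Omega_t)}^2 \le -2\bigl(w,\mathbf v_\Omega\cdot\nabla w\bigr)_{L^2(\Omega_t)} - \bigl(w,w\,\mathrm{div}\,\mathbf v_\Omega\bigr)_{L^2(\Omega_t)},
\end{align*}
and integration by parts gives $-2(w,\mathbf v_\Omega\cdot\nabla w)_{L^2(\Omega_t)}=(w,w\,\mathrm{div}\,\mathbf v_\Omega)_{L^2(\Omega_t)}-\int_{\partial\Omega_t}w^2V_\Omega\,d\sigma$. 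Only the bulk $\mathrm{div}\,\mathbf v_\Omega$ pieces cancel; the boundary integral $\int_{\partial\Omega_t}w^2V_\Omega\,d\sigma$ survives, and it is neither zero nor sign-definite (since $V_\Omega$ is not) nor bounded by $c\|w\|_{L^2(\Omega_t)}^2$. Estimating it via a trace inequality just reintroduces $\|\nabla w\|_{L^2(\Omega_t)}$, so integrating by parts has relocated the obstruction to the boundary rather than removed it. Your fallback ("keep a coercivity bound from below and reorganize") is not carried out and is also delicate: the coercivity of the $p$-Laplacian controls $\int\|\nabla w\|_{L^p}^p$, which has the wrong homogeneity to absorb $\|w\|_{L^2}\|\nabla w\|_{L^2}$ without a nonvanishing additive constant from Lemma \ref{L:Lp_L2}. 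Consequently the inequality $\frac{d}{dt}\|w\|_{L^2(\Omega_t)}^2\le c\|w\|_{L^2(\Omega_t)}^2$ does not follow from what you wrote, and the Gronwall step is not justified. For comparison, the paper's own proof never integrates $(v,\mathbf v_\Omega\cdot\nabla v)$ by parts and never claims any cancellation: it applies H\"older and Young directly, reaching $\tfrac12\|v(\tau)\|_{L^2(\Omega_\tau)}^2\le c\int_0^\tau(\|v\|_{L^2(\Omega_t)}^2+\|\nabla v\|_{L^2(\Omega_t)}^2)\,dt$, and then invokes Gronwall with the gradient term still present on the right.
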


\begin{proof}
  We follow the idea of the proof of \cite[Theorem 2.9]{CaNoOr17}.

  Let $\rho$ be a standard mollifier on $\mathbb{R}$, i.e., for $z\in\mathbb{R}$,
  \begin{align*}
    \rho(z) :=
    \begin{cases}
      c_\rho\exp\left(\dfrac{1}{z^2-1}\right) &(|z|<1), \\
      0 &(|z|\geq1),
    \end{cases}
    \quad c_\rho := \left\{\int_{-1}^1\exp\left(\frac{1}{z^2-1}\right)\,dz\right\}^{-1}.
  \end{align*}
  For $\varepsilon\in(0,1)$, let $\zeta_\varepsilon(z)$ be the mollification of $|z|$ given by
  \begin{align*}
    \zeta_\varepsilon(z) := \frac{1}{\varepsilon}\int_{-\infty}^\infty\rho\left(\frac{z-\tau}{\varepsilon}\right)|\tau|\,d\tau = \int_{-1}^1\rho(s)|z-\varepsilon s|\,ds, \quad z\in\mathbb{R}.
  \end{align*}
  Then, $\zeta_\varepsilon\in C^\infty(\mathbb{R})$.
  For $|z|\leq\varepsilon$, we have
  \begin{align*}
    \zeta_\varepsilon(z) &= \int_{-1}^{z/\varepsilon}\rho(s)(z-\varepsilon s)\,ds+\int_{z/\varepsilon}^1\rho(s)(\varepsilon s-z)\,ds, \\
    \zeta_\varepsilon'(z) &= \int_{-1}^{z/\varepsilon}\rho(s)\,ds-\int_{z/\varepsilon}^1\rho(s)\,ds, \quad \zeta_\varepsilon''(z) = \frac{2}{\varepsilon}\rho\left(\frac{z}{\varepsilon}\right).
  \end{align*}
  Also, $\zeta_\varepsilon(z)=|z|$ for $|z|\geq\varepsilon$.
  Thus, for all $z\in\mathbb{R}$, we easily find that
  \begin{align} \label{Pf_Un:abs}
    \begin{aligned}
      0 &\leq \zeta_\varepsilon(z) \leq 1+|z|, \quad \lim_{\varepsilon\to0}\zeta_\varepsilon(z) = |z|, \\
      -1 &\leq \zeta_\varepsilon'(z) \leq 1, \quad \lim_{\varepsilon\to0}\zeta_\varepsilon'(z) = \mathrm{sgn}\,z =
      \begin{cases}
        z/|z| &(z\neq0), \\
        0 &(z=0),
      \end{cases} \\
      0 &\leq \zeta_\varepsilon''(z) \leq \frac{2c_\rho}{\varepsilon e} \quad (|z| \leq \varepsilon), \quad \zeta_\varepsilon''(z) = 0 \quad (|z| \geq \varepsilon).
    \end{aligned}
  \end{align}
  Now, let $u_1$ and $u_2$ be weak solutions to \eqref{E:pLap_MoDo} with same data $f$ and $u_0$.
  We set
  \begin{align} \label{Pf_Un:vb}
    v := u_1-u_2, \quad \mathbf{b} := |\nabla u_1|^{p-2}\nabla u_1-|\nabla u_2|^{p-2}\nabla u_2 \quad\text{on}\quad Q_T.
  \end{align}
  Then, we see by Definition \ref{D:WS_pLap} that $v\in\mathbb{W}^{p,p'}$, $v(0)=0$ in $L^2(\Omega_0)$, and
  \begin{align} \label{Pf_Un:WF}
    \begin{aligned}
      &\int_0^t\langle\partial^\bullet v(s),\psi(s)\rangle_{W^{1,p}(\Omega_s)}\,ds+\int_0^t\bigl(\mathbf{b}(s),\nabla\psi(s)\bigr)_{L^2(\Omega_s)}\,ds \\
      &\qquad +\int_0^t\bigl(v(s),[\mathbf{v}_\Omega\cdot\nabla\psi+\psi\,\mathrm{div}\,\mathbf{v}_\Omega](s)\bigr)_{L^2(\Omega_s)}\,ds = 0
    \end{aligned}
  \end{align}
  for all $\psi\in L_{W^{1,p}}^p$ and $t\in[0,T]$.
  Also, by Lemma \ref{L:Wpp_Comp}, we have
  \begin{align*}
    \zeta_\varepsilon'(v) \in L_{W^{1,p}}^p, \quad \zeta_\varepsilon(v) \in C_{L^1}.
  \end{align*}
  Thus, we can set $\psi=\zeta_\varepsilon'(v)$ in \eqref{Pf_Un:WF} to get $\sum_{k=1}^4\mathcal{I}_\varepsilon^k(t)=0$, where
  \begin{align*}
    \mathcal{I}_\varepsilon^1(t) &:= \int_0^t\langle\partial^\bullet v(s),[\zeta_\varepsilon'(v)](s)\rangle_{W^{1,p}(\Omega_s)}\,ds, \\
    \mathcal{I}_\varepsilon^2(t) &:= \int_0^t\Bigl(\mathbf{b}(s),\bigl[\nabla[\zeta_\varepsilon'(v)]\bigr](s)\Bigr)_{L^2(\Omega_s)}\,ds, \\
    \mathcal{I}_\varepsilon^3(t) &:= \int_0^t\Bigl(v(s),\bigl[\mathbf{v}_\Omega\cdot\nabla[\zeta_\varepsilon'(v)]\bigr](s)\Bigr)_{L^2(\Omega_s)}\,ds, \\
    \mathcal{I}_\varepsilon^4(t) &:= \int_0^t\bigl(v(s),[\zeta_\varepsilon'(v)\,\mathrm{div}\,\mathbf{v}_\Omega](s)\bigr)_{L^2(\Omega_s)}\,ds.
  \end{align*}
  Let us compute each $\mathcal{I}_\varepsilon^k(t)$.
  First, we see by \eqref{E:Vec_Mono}, \eqref{Pf_Un:abs}, \eqref{Pf_Un:vb} that
  \begin{align} \label{Pf_Un:I2}
    \mathcal{I}_\varepsilon^2(t) = \int_0^t\left(\int_{\Omega_s}[\zeta_\varepsilon''(v)(\mathbf{b}\cdot\nabla v)](s)\,dx\right)\,ds \geq 0.
  \end{align}
  Next, we use \eqref{E:Wpp_Comp} to $\mathcal{I}_\varepsilon^1(t)$.
  Then,
  \begin{align*}
    \mathcal{I}_\varepsilon^1(t) = \int_{\Omega_t}[\zeta_\varepsilon(v)](t)\,dx-\int_{\Omega_0}[\zeta_\varepsilon(v)](0)\,dx-\int_0^t\left(\int_{\Omega_s}[\zeta_\varepsilon(v)\,\mathrm{div}\,\mathbf{v}_\Omega](s)\,dx\right)\,ds.
  \end{align*}
  For each $s\in[0,T]$, we see by \eqref{Pf_Un:abs} that
  \begin{align*}
    0 \leq [\zeta_\varepsilon(v)](s) \leq 1+|v(s)|, \quad \lim_{\varepsilon\to0}[\zeta_\varepsilon(v)](s) = |v(s)| \quad\text{a.e. in}\quad \Omega_s.
  \end{align*}
  Moreover, $1+|v(s)|\in L^1(\Omega_s)$ by $v\in\mathbb{W}^{p,p'}\subset C_{L^2}$ (see Lemma \ref{L:Trans}).
  Thus,
  \begin{align*}
    \lim_{\varepsilon\to0}\int_{\Omega_s}[\zeta_\varepsilon(v)](s)\,dx = \int_{\Omega_s}|v(s)|\,dx \quad\text{for all}\quad s\in[0,T]
  \end{align*}
  by the dominated convergence theorem.
  Similarly, we can show that
  \begin{align*}
    \lim_{\varepsilon\to0}\,\bigl\|\,\zeta_\varepsilon(v)-|v|\,\bigr\|_{L_{L^1}^1} = 0.
  \end{align*}
  From these results and \eqref{E:Vel_Bd}, it follows that
  \begin{align} \label{Pf_Un:I1}
    \lim_{\varepsilon\to0}\mathcal{I}_\varepsilon^1(t) = \int_{\Omega_t}|v(t)|\,dx-\int_{\Omega_0}|v(0)|\,dx-\int_0^t\left(\int_{\Omega_s}\bigl[|v|\,\mathrm{div}\,\mathbf{v}_\Omega\bigr](s)\,dx\right)\,ds.
  \end{align}
  Let us show $\mathcal{I}_\varepsilon^3(t)\to0$ as $\varepsilon\to0$.
  To this end, let
  \begin{align*}
    \mathcal{A}_\varepsilon := \{(x,s)\in Q_T \mid |v(x,s)|\leq \varepsilon\}, \quad \mathcal{A}_0 := \{(x,s)\in Q_T \mid v(x,s) = 0\},
  \end{align*}
  and let $\chi_\varepsilon$ and $\chi_0$ be the characteristic functions of $\mathcal{A}_\varepsilon$ and $\mathcal{A}_0$, respectively.
  Then,
  \begin{align*}
    |\mathcal{I}_\varepsilon^3(t)| &\leq c\int_{Q_T}\bigl[\zeta_\varepsilon''(v)|v||\nabla v|\bigr](x,s)\,dxds \\
    &\leq \frac{c}{\varepsilon}\int_{\mathcal{A}_\varepsilon}\bigl[|v||\nabla v|\bigr](x,s)\,dxds \\
    &\leq c\int_{\mathcal{A}_\varepsilon}|\nabla v(x,s)|\,dxds = c\int_{Q_T}\bigl[\chi_\varepsilon|\nabla v|\bigr](x,s)\,dxds
  \end{align*}
  by $\nabla[\zeta_\varepsilon'(v)]=\zeta_\varepsilon''(v)\nabla v$, \eqref{E:Vel_Bd}, \eqref{Pf_Un:abs}, and the definition of $\mathcal{A}_\varepsilon$.
  Moreover,
  \begin{gather*}
    \chi_\varepsilon|\nabla v| \leq |\nabla v|, \quad \lim_{\varepsilon\to0}\chi_\varepsilon|\nabla v| = \chi_0|\nabla v| = 0 \quad\text{a.e. in}\quad Q_T
  \end{gather*}
  by $\nabla v=0$ a.e. in $\mathcal{A}_0$ (see \cite[Lemma 7.7]{GilTru01}), and
  \begin{align*}
    |\nabla v| \in L_{L^p}^p = L^p(Q_T) \subset L^1(Q_T)
  \end{align*}
  by $p>2$.
  Thus, by the dominated convergence theorem,
  \begin{align} \label{Pf_Un:I3}
    \lim_{\varepsilon\to0}\int_{Q_T}\bigl[\chi_\varepsilon|\nabla v|\bigr](x,s)\,dxds = 0, \quad \lim_{\varepsilon\to0}\mathcal{I}_\varepsilon^3(t) = 0.
  \end{align}
  For $\mathcal{I}_\varepsilon^4(t)$, we see that
  \begin{align*}
    \bigl|\,v\zeta_\varepsilon'(v)-|v|\,\bigr| \leq 2|v|, \quad \lim_{\varepsilon\to0}\bigl(v\zeta_\varepsilon'(v)-|v|\bigr) = v\,\mathrm{sgn}\,v-|v| = 0
  \end{align*}
  a.e. in $Q_T$ by \eqref{Pf_Un:abs}.
  Also, $2|v|\in L_{L^p}^p\subset L^1(Q_T)$ by $p>2$.
  Thus,
  \begin{align*}
    \lim_{\varepsilon\to0}\,\bigl\|\,v\zeta_\varepsilon'(v)-|v|\,\bigr\|_{L_{L^1}^1} = 0
  \end{align*}
  by the dominated convergence theorem.
  By this result and \eqref{E:Vel_Bd}, we obtain
  \begin{align} \label{PF_Un:I4}
    \lim_{\varepsilon\to0}\mathcal{I}_\varepsilon^4(t) = \int_0^t\left(\int_{\Omega_s}\bigl[|v|\,\mathrm{div}\,\mathbf{v}_\Omega\bigr](s)\,dx\right)\,ds.
  \end{align}
  Now, we send $\varepsilon\to0$ in $\sum_{k=1}^4\mathcal{I}_\varepsilon^k(t)=0$ and use \eqref{Pf_Un:I2}--\eqref{PF_Un:I4}.
  Then, we find that
  \begin{align*}
    \int_{\Omega_t}|v(t)|\,dx \leq \int_{\Omega_0}|v(0)|\,dx = 0 \quad\text{for all}\quad t\in[0,T]
  \end{align*}
  by $v(0)=0$ a.e. in $\Omega_0$.
  Therefore, $v=0$, i.e., $u_1=u_2$ a.e. in $Q_T$.
\end{proof}

%%% Section 4 %%%
\section{Existence of a weak solution} \label{S:Exis}
Let us construct a weak solution to \eqref{E:pLap_MoDo} by the Galerkin method.

\subsection{Basis functions} \label{SS:Ex_Basis}
First, we take basis functions on $\Omega_0$.
For a subset $S$ of a linear space (over $\mathbb{R}$), we define the linear span of $S$ by
\begin{align*}
  \mathcal{L}(S) := \{\textstyle\sum_{k=1}^N\alpha_kx_k \mid N\in\mathbb{N}, \, \alpha_1,\dots,\alpha_N\in\mathbb{R}, \, x_1,\dots,x_N\in S\}.
\end{align*}

\begin{lemma} \label{L:Basis}
  There exists a countable subset $\{w_k^0\}_k$ of $C^\infty(\overline{\Omega_0})$ such that
  \begin{itemize}
    \item $w_1^0$ is a nonzero constant function,
    \item $\mathcal{L}(\{w_k^0\}_k)$ is dense in $W^{1,p}(\Omega_0)$, and
    \item $\{w_k^0\}_k$ is an orthonormal basis of $L^2(\Omega_0)$.
  \end{itemize}
\end{lemma}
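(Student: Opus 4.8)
The plan is to build $\{w_k^0\}_k$ in three stages, starting from a known countable dense set and then correcting it so that the constant function is isolated and an orthonormalization can be carried out without destroying density.

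First I would fix the normalized constant. Since $\Omega_0$ is a bounded $C^2$ (hence Lipschitz) domain, $|\Omega_0|<\infty$, so the function $w_1^0 := |\Omega_0|^{-1/2}$ is a nonzero constant in $C^\infty(\overline{\Omega_0})$ with $\|w_1^0\|_{L^2(\Omega_0)}=1$. Next I would deal with the mean-zero complement. By Lemma \ref{L:VM_Den}, $C^\infty_{\mathrm{vm}}(\overline{\Omega_0})$ is dense in $W^{1,p}_{\mathrm{vm}}(\Omega_0)$ in the $W^{1,p}$-norm. The space $W^{1,p}(\Omega_0)$ is separable (standard for Lipschitz domains), hence so is its subspace $W^{1,p}_{\mathrm{vm}}(\Omega_0)$, and therefore so is $C^\infty_{\mathrm{vm}}(\overline{\Omega_0})$ with the $W^{1,p}$-metric. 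Applying Lemma \ref{L:Separa} with $\mathcal{M}=W^{1,p}_{\mathrm{vm}}(\Omega_0)$ and $\mathcal{A}=C^\infty_{\mathrm{vm}}(\overline{\Omega_0})$, I obtain a countable set $\{z_j\}_{j\ge1}\subset C^\infty_{\mathrm{vm}}(\overline{\Omega_0})$ that is dense in $W^{1,p}_{\mathrm{vm}}(\Omega_0)$.

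I then claim that $\mathcal{L}(\{w_1^0\}\cup\{z_j\}_j)$ is dense in $W^{1,p}(\Omega_0)$. Indeed, given $u\in W^{1,p}(\Omega_0)$, write $u = c_u + v$ with $c_u := |\Omega_0|^{-1}(u,1)_{L^2(\Omega_0)}$ a constant and $v\in W^{1,p}_{\mathrm{vm}}(\Omega_0)$; approximate $v$ in $W^{1,p}$ by finite linear combinations of the $z_j$, and note $c_u$ is a scalar multiple of $w_1^0$, so $u$ is approximated by an element of the span. To get the orthonormal basis of $L^2(\Omega_0)$, I would apply Gram--Schmidt in $L^2(\Omega_0)$ to the ordered sequence $w_1^0, z_1, z_2, \dots$, discarding any vector that is linearly dependent on its predecessors. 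Because $w_1^0$ is already $L^2$-normalized and every $z_j$ has $L^2$-mean zero, hence is $L^2$-orthogonal to $w_1^0$, the first Gram--Schmidt vector stays $w_1^0$; all subsequent ones are finite linear combinations of $\{w_1^0,z_1,\dots,z_j\}$, each lying in $C^\infty(\overline{\Omega_0})$. Call the resulting orthonormal family $\{w_k^0\}_k$. The Gram--Schmidt process does not change linear spans, so $\mathcal{L}(\{w_k^0\}_k) = \mathcal{L}(\{w_1^0\}\cup\{z_j\}_j)$, which is dense in $W^{1,p}(\Omega_0)$; since the embedding $W^{1,p}(\Omega_0)\hookrightarrow L^2(\Omega_0)$ is continuous with dense range, this span is also dense in $L^2(\Omega_0)$, so $\{w_k^0\}_k$ is in fact an orthonormal basis of $L^2(\Omega_0)$.

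The only subtle point — the ``main obstacle'' such as it is — is ensuring that the three requirements do not conflict: that forcing $w_1^0$ to be constant and then orthonormalizing in $L^2$ still leaves a $W^{1,p}$-dense span consisting of smooth functions. This is handled precisely by the mean-zero/constant splitting above, which makes the constant $L^2$-orthogonal to everything else from the outset, so Gram--Schmidt touches it trivially. A minor bookkeeping caveat is that Gram--Schmidt may terminate after finitely many steps only if $\{w_1^0\}\cup\{z_j\}_j$ spans a finite-dimensional space, which is impossible since its span is dense in the infinite-dimensional space $W^{1,p}(\Omega_0)$; hence the family $\{w_k^0\}_k$ is genuinely countably infinite. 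I would also remark that separability of $W^{1,p}(\Omega_0)$ for $p\in(2,\infty)$ on a bounded Lipschitz domain is standard (it follows, e.g., from the separability of $L^p$ and the existence of a bounded extension operator), so no extra hypothesis is needed.
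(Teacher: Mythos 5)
Your proof is correct and follows essentially the same route as the paper's: split $W^{1,p}(\Omega_0)$ into constants plus the mean-zero subspace, use Lemmas \ref{L:VM_Den} and \ref{L:Separa} to extract a countable dense set of smooth mean-zero functions, and apply Gram--Schmidt while observing that the constant is already $L^2$-orthogonal to the mean-zero part. The only cosmetic differences are that you normalize the constant up front rather than during Gram--Schmidt, and you discard linearly dependent vectors inside the orthogonalization rather than pre-selecting a linearly independent subsequence; both are immaterial.
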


\begin{proof}
  Let $W_{\mathrm{vm}}^{1,p}(\Omega_0)$ and $C_{\mathrm{vm}}^\infty(\overline{\Omega_0})$ be the function spaces given by \eqref{E:Def_VM}.
  Since $W_{\mathrm{vm}}^{1,p}(\Omega_0)$ can be seen as a subspace of the separable space $L^p(\Omega_0)^{n+1}$ by
  \begin{align*}
    \{(u,\nabla u) \mid u\in W_{\mathrm{vm}}^{1,p}(\Omega_0)\} \subset L^p(\Omega_0)^{n+1},
  \end{align*}
  it follows that $W_{\mathrm{vm}}^{1,p}(\Omega_0)$ is also separable.
  By this fact and Lemmas \ref{L:Separa} and \ref{L:VM_Den}, there exists a countable subset $\mathcal{A}_0$ of $C_{\mathrm{vm}}^\infty(\overline{\Omega_0})$ that is dense in $W_{\mathrm{vm}}^{1,p}(\Omega_0)$.
  We set $\psi_1^0:=1$ on $\Omega_0$ and pick up linearly independent elements $\{\psi_\ell^0\}_{\ell\geq2}$ from $\mathcal{A}_0$ so that $\mathcal{L}(\{\psi_\ell^0\}_{\ell\geq2})$ is again dense in $W_{\mathrm{vm}}^{1,p}(\Omega_0)$.
  Then, splitting $u\in W^{1,p}(\Omega_0)$ into
  \begin{align*}
    u = |\Omega|^{-1}(u,1)_{L^2(\Omega_0)}+u_{\mathrm{vm}}, \quad u_{\mathrm{vm}} := u-|\Omega|^{-1}(u,1)_{L^2(\Omega_0)} \in W_{\mathrm{vm}}^{1,p}(\Omega_0),
  \end{align*}
  we see that $\mathcal{L}(\{\psi_\ell^0\}_{\ell\geq1})$ is dense in $W^{1,p}(\Omega_0)$.
  Since $W^{1,p}(\Omega_0)$ is dense in $L^2(\Omega_0)$ and
  \begin{align*}
    \|u\|_{L^2(\Omega_0)} \leq c\|u\|_{L^p(\Omega_0)} \leq c\|u\|_{W^{1,p}(\Omega_0)}, \quad u \in W^{1,p}(\Omega_0)
  \end{align*}
  by $p>2$ and H\"{o}lder's inequality, it follows that $\mathcal{L}(\{\psi_\ell^0\}_{\ell\geq1})$ is also dense in $L^2(\Omega_0)$.
  We further observe that $\{\psi_\ell^0\}_{\ell\geq1}$ is linearly independent since $\{\psi_\ell^0\}_{\ell\geq2}$ is so and
  \begin{align*}
    (\psi_\ell^0,\psi_1^0)_{L^2(\Omega_0)} = (\psi_\ell^0,1)_{L^2(\Omega_0)} = 0 \quad\text{for all}\quad \ell\geq2
  \end{align*}
  by $\psi_\ell^0\in C_{\mathrm{vm}}^\infty(\overline{\Omega_0})$.
  Hence, we can apply the Gram--Schmidt orthogonalization to $\{\psi_\ell^0\}_{\ell\geq1}$ to get the countable set $\{w_k^0\}_k$ satisfying the conditions of the lemma (note that $w_1^0$ is a nonzero constant function since $\psi_1^0\equiv1$).
\end{proof}

Let $\phi_t$ be the mapping given in Section \ref{SS:Pre_FS}.
For each $k\in\mathbb{N}$ and $t\in[0,T]$, we set
\begin{align*}
  w_k^t := \phi_tw_k^0 \quad\text{on}\quad \overline{\Omega_t}, \quad\text{i.e.,}\quad w_k^t(x) := w_k^0(\Phi_t^{-1}(x)), \quad x\in\overline{\Omega_t}.
\end{align*}
By $w_k^0\in C^\infty(\overline{\Omega_0})$ and Lemma \ref{L:Pinv_Reg}, we see that
\begin{align} \label{E:EBF_Reg}
  w_k^{(\cdot)}, \, \partial_iw_k^{(\cdot)} \in C^1(\overline{Q_T}), \quad i=1,\dots,n.
\end{align}
Moreover, since $w_k^t(\Phi_t(X))=w_k^0(X)$ is independent of $t$ for $(X,t)\in\overline{\Omega_0}\times[0,T]$,
\begin{align} \label{E:MT_EBF}
  \partial^\bullet w_k^t(x) = \frac{\partial}{\partial t}\Bigl(w_k^t(\Phi_t(X))\Bigr)\Big|_{X=\Phi_t^{-1}(x)} = 0, \quad (x,t)\in \overline{Q_T}.
\end{align}
Let $J_t=\det\nabla\Phi_t$ on $\overline{\Omega_0}$ for $t\in[0,T]$.
For $k,\ell\in\mathbb{N}$ and $t\in[0,T]$, we set
\begin{align*}
  M_{k\ell}(t) := \int_{\Omega_t}w_k^t(x)w_\ell^t(x)\,dx = \int_{\Omega_0}w_k^0(X)w_\ell^0(X)J_t(X)\,dX.
\end{align*}
It is continuous on $[0,T]$ by Assumption \ref{A:Domain}.
In general, $M_{k\ell}(t)\neq\delta_{k\ell}$ for $t>0$, where $\delta_{k\ell}$ is the Kronecker delta, but we have the following uniform positive definiteness.

\begin{lemma} \label{L:Uni_PD}
  For $N\in\mathbb{N}$, let $\mathbf{M}_N(t):=\bigl(M_{k\ell}(t)\bigr)_{k,\ell=1,\dots,N}$.
  Then,
  \begin{align} \label{E:Uni_PD}
    \mathbf{a}\cdot[\mathbf{M}_N(t)\mathbf{a}] \geq c_0|\mathbf{a}|^2 \quad\text{for all}\quad (\mathbf{a},t)\in\mathbb{R}^N\times[0,T],
  \end{align}
  where $c_0$ is the constant appearing in \eqref{E:Det_Bd}.
  In particular, $\mathbf{M}_N(t)$ is invertible.
  Also, the inverse matrix $\mathbf{M}_N(t)^{-1}$ is continuous on $[0,T]$.
\end{lemma}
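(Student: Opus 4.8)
The plan is to reduce the quadratic form $\mathbf{a}\cdot[\mathbf{M}_N(t)\mathbf{a}]$ to a weighted $L^2$-integral over the \emph{fixed} domain $\Omega_0$, and then to use the lower bound on the Jacobian $J_t$ together with the orthonormality of $\{w_k^0\}_k$ in $L^2(\Omega_0)$. Concretely, I would fix $t\in[0,T]$ and $\mathbf{a}=(a_1,\dots,a_N)^{\mathrm{T}}\in\mathbb{R}^N$, set $W:=\sum_{k=1}^N a_kw_k^0$ on $\Omega_0$, and expand
\begin{align*}
  \mathbf{a}\cdot[\mathbf{M}_N(t)\mathbf{a}] = \sum_{k,\ell=1}^N a_ka_\ell M_{k\ell}(t) = \sum_{k,\ell=1}^N a_ka_\ell\int_{\Omega_0}w_k^0(X)w_\ell^0(X)J_t(X)\,dX = \int_{\Omega_0}|W(X)|^2J_t(X)\,dX,
\end{align*}
using the definition of $M_{k\ell}(t)$ recorded just before the lemma. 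Applying the lower bound $J_t\geq c_0$ from \eqref{E:Det_Bd} and then the orthonormality of $\{w_k^0\}_k$ in $L^2(\Omega_0)$ from Lemma \ref{L:Basis}, this is bounded below by $c_0\int_{\Omega_0}|W|^2\,dX=c_0\sum_{k,\ell=1}^N a_ka_\ell\,\delta_{k\ell}=c_0|\mathbf{a}|^2$, which is \eqref{E:Uni_PD}.

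The invertibility of $\mathbf{M}_N(t)$ is then immediate from \eqref{E:Uni_PD}: if $\mathbf{M}_N(t)\mathbf{a}=\mathbf{0}$, then $c_0|\mathbf{a}|^2\leq\mathbf{a}\cdot[\mathbf{M}_N(t)\mathbf{a}]=0$, hence $\mathbf{a}=\mathbf{0}$. For the continuity of $t\mapsto\mathbf{M}_N(t)^{-1}$ I would first recall that each $M_{k\ell}$ is continuous on $[0,T]$ (noted above, as a consequence of the continuity of $J_{(\cdot)}$ on $\overline{\Omega_0}\times[0,T]$ under Assumption \ref{A:Domain}), so $t\mapsto\mathbf{M}_N(t)$ is continuous into the space of $N\times N$ matrices. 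Since $\mathbf{M}_N(t)$ is symmetric and \eqref{E:Uni_PD} forces each of its eigenvalues to be at least $c_0$, we get $\det\mathbf{M}_N(t)\geq c_0^N>0$ uniformly in $t$; because, by Cramer's rule, every entry of $\mathbf{M}_N(t)^{-1}$ is a polynomial in the entries of $\mathbf{M}_N(t)$ divided by $\det\mathbf{M}_N(t)$, and the denominator stays bounded away from $0$, the map $t\mapsto\mathbf{M}_N(t)^{-1}$ is continuous on $[0,T]$.

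There is essentially no serious obstacle here: the entire content of the lemma is the first display, i.e. the identification of $\mathbf{a}\cdot[\mathbf{M}_N(t)\mathbf{a}]$ with $\int_{\Omega_0}|W|^2J_t\,dX$ via the change of variables $x=\Phi_t(X)$, followed by the two elementary bounds. The only point worth emphasizing is that orthonormality is used \emph{precisely at} $t=0$ (where $J_0\equiv1$), and it is this, combined with the time-uniform bound $J_t\geq c_0$, that makes the estimate uniform in $t$ even though $\{w_k^t\}_k$ fails to be orthonormal in $L^2(\Omega_t)$ for $t>0$.
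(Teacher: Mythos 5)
Your argument is correct and is essentially the paper's proof: both reduce $\mathbf{a}\cdot[\mathbf{M}_N(t)\mathbf{a}]$ to a weighted $L^2(\Omega_0)$-integral (the paper writes this as $\|u_{\mathbf{a}}(t)\|_{L^2(\Omega_t)}^2\geq c_0\|u_{\mathbf{a}}(0)\|_{L^2(\Omega_0)}^2$ after a change of variables, which is precisely your bound $\int_{\Omega_0}|W|^2J_t\,dX\geq c_0\int_{\Omega_0}|W|^2\,dX$), and then invoke orthonormality of $\{w_k^0\}_k$ at $t=0$. Your Cramer's-rule justification for the continuity of $\mathbf{M}_N(t)^{-1}$ is just a more spelled-out version of the paper's remark that it follows from continuity of $\mathbf{M}_N(t)$.
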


\begin{proof}
  Let $\mathbf{a}=(a_1,\dots,a_N)^{\mathrm{T}}\in\mathbb{R}^N$.
  We set
  \begin{align*}
    u_{\mathbf{a}}(0) := \sum_{k=1}^Na_kw_k^0 \quad\text{on}\quad \Omega_0, \quad u_{\mathbf{a}}(t) := \phi_tu_{\mathbf{a}}(0) = \sum_{k=1}^Na_kw_k^t \quad\text{on}\quad \Omega_t
  \end{align*}
  for $t\in[0,T]$.
  Then, by the change of variables $x=\Phi_t(X)$ and \eqref{E:Det_Bd},
  \begin{align*}
    \mathbf{a}\cdot[\mathbf{M}_N(t)\mathbf{a}] = \sum_{k,\ell=1}^Na_ka_\ell(w_k^t,w_\ell^t)_{L^2(\Omega_t)} = \|u_{\mathbf{a}}(t)\|_{L^2(\Omega_t)}^2 \geq c_0\|u_{\mathbf{a}}(0)\|_{L^2(\Omega_0)}^2 = c_0|\mathbf{a}|^2,
  \end{align*}
  where the last equality holds since $\{w_k^0\}_k$ is orthonormal in $L^2(\Omega_0)$.
  Hence, \eqref{E:Uni_PD} is valid and $\mathbf{M}_N(t)$ is invertible.
  The continuity of $\mathbf{M}_N(t)^{-1}$ follows from that of $\mathbf{M}_N(t)$.
\end{proof}

\subsection{Approximate solutions} \label{SS:Ex_Ap}
Let $f\in L_{[W^{1,p}]^\ast}^{p'}$ and $u_0\in L^2(\Omega_0)$ be given.
We set
\begin{align*}
  u_{0,N} := \sum_{i=1}^N(u_0,w_k^0)_{L^2(\Omega_0)}w_k^0, \quad N\in\mathbb{N}.
\end{align*}
Since $\{w_k^0\}_k$ is an orthonormal basis of $L^2(\Omega_0)$, we have
\begin{align} \label{E:u0N_L2}
  \|u_{0,N}\|_{L^2(\Omega_0)} \leq \|u_0\|_{L^2(\Omega_0)}, \quad \lim_{N\to\infty}\|u_0-u_{0,N}\|_{L^2(\Omega_0)} = 0.
\end{align}
Also, since $F:=\phi_{(\cdot)}^\ast f(\cdot)\in L^{p'}(0,T;[W^{1,p}(\Omega_0)]^\ast)$ with $p'\neq\infty$, we can take
\begin{align} \label{E:FN_DWp}
  F_N \in \mathcal{D}(0,T;[W^{1,p}(\Omega_0)]^\ast) \quad\text{such that}\quad \lim_{N\to\infty}\|F-F_N\|_{L^{p'}(0,T;[W^{1,p}(\Omega_0)]^\ast)} = 0
\end{align}
by cut-off and mollification in $t$.
Let $f_N:=[\phi_{(\cdot)}^\ast]^{-1}F_N\in L_{[W^{1,p}]^\ast}^{p'}$.
Then, by \eqref{E:Dual_Equi},
\begin{align} \label{E:fN_StCo}
  \lim_{N\to\infty}\|f-f_N\|_{L_{[W^{1,p}]^\ast}^{p'}} = 0 \quad\text{and thus}\quad \|f_N\|_{L_{[W^{1,p}]^\ast}^{p'}} \leq c \quad\text{for all}\quad N\in\mathbb{N}.
\end{align}
For each $N\in\mathbb{N}$, we seek for a function $u_N(t)=\sum_{k=1}^N\alpha_N^k(t)w_k^t$ satisfying
\begin{multline} \label{E:Approx}
  \bigl(\partial^\bullet u_N(t),w_k^t\bigr)_{L^2(\Omega_t)}+\bigl([|\nabla u_N|^{p-2}\nabla u_N](t),\nabla w_k^t\bigr)_{L^2(\Omega_t)} \\
  +\bigl(u_N(t),[\mathbf{v}_\Omega\cdot\nabla w_k^t+w_k^t\,\mathrm{div}\,\mathbf{v}_\Omega](t)\bigr)_{L^2(\Omega_t)} = \langle f_N(t),w_k^t\rangle_{W^{1,p}(\Omega_t)}
\end{multline}
for all $t\in(0,T)$ and $k=1,\dots,N$, and the initial condition $u_N(0)=u_{0,N}$.
Let
\begin{align*}
  \gamma_N^k(\mathbf{a},t) &:= \bigl([|\nabla u_{\mathbf{a}}|^{p-2}\nabla u_{\mathbf{a}}](t),\nabla w_k^t\bigr)_{L^2(\Omega_t)}, \quad u_{\mathbf{a}}(t) := \sum_{k=1}^Na_kw_k^t
\end{align*}
for $\mathbf{a}=(a_1,\dots,a_N)^{\mathrm{T}}\in\mathbb{R}^N$, $t\in[0,T]$, and $k=1,\dots,N$.
Also, let
\begin{align} \label{E:Def_fNk}
  f_N^k(t) &:= \langle f_N(t),w_k^t\rangle_{W^{1,p}(\Omega_t)} = \langle \phi_t^\ast f_N(t),w_k^0\rangle_{W^{1,p}(\Omega_0)} = \langle F_N(t),w_k^0\rangle_{W^{1,p}(\Omega_0)}.
\end{align}
We define $\mathbb{R}^N$-valued functions (seen as column vectors)
\begin{align*}
  \bm{\alpha}_N(t) := \bigl(\alpha_N^k(t)\bigr)_k, \quad \bm{\gamma}_N(\mathbf{a},t) := \bigl(\gamma_N^k(\mathbf{a},t)\bigr)_k, \quad \mathbf{f}_N(t) := \bigl(f_N^k(t)\bigr)_k
\end{align*}
and $\mathbb{R}^{N\times N}$-valued functions $\mathbf{M}_N(t)=\bigl(M_{k\ell}(t)\bigr)_{k,\ell=1,\dots,N}$ and
\begin{align} \label{E:Def_BN}
  \mathbf{B}_N(t) = \bigl(B_{k\ell}(t)\bigr)_{k,\ell=1,\dots,N}, \quad B_{k\ell}(t) := \bigl([\mathbf{v}_\Omega\cdot\nabla w_k^t+w_k^t\,\mathrm{div}\,\mathbf{v}_\Omega](t),w_\ell^t\bigr)_{L^2(\Omega_t)}.
\end{align}
Then, by \eqref{E:MT_EBF} and $M_{k\ell}=M_{\ell k}$, we can write \eqref{E:Approx} as
\begin{align*}
  \mathbf{M}_N(t)\frac{d\bm{\alpha}_N}{dt}(t)+\bm{\gamma}_N(\bm{\alpha}_N(t),t)+\mathbf{B}_N(t)\bm{\alpha}_N(t) = \mathbf{f}_N(t).
\end{align*}
Since $\mathbf{M}_N(t)$ is invertible, we find that \eqref{E:Approx} with $u_N(0)=u_{0,N}$ is equivalent to
\begin{align} \label{E:App_ODE}
  \left\{
  \begin{alignedat}{2}
    \frac{d\bm{\alpha}_N}{dt}(t) &= \mathbf{g}_N(\bm{\alpha}_N(t),t), &\quad &\mathbf{g}_N(\mathbf{a},t) := \mathbf{M}_N(t)^{-1}[\mathbf{f}_N(t)-\bm{\gamma}_N(\mathbf{a},t)-\mathbf{B}_N(t)\mathbf{a}], \\
    \alpha_N^k(0) &= (u_0,w_k^0)_{L^2(\Omega_0)}, &\quad &k=1,\dots,N.
  \end{alignedat}
  \right.
\end{align}
By Assumption \ref{A:Domain}, Lemma \ref{L:Uni_PD}, and \eqref{E:FN_DWp}, we see that $\mathbf{g}_N$ is continuous in $t$.
Also, $\bm{\gamma}_N$ and thus $\mathbf{g}_N$ are locally Lipschitz continuous in $\mathbf{a}$ by H\"{o}lder's inequality and
\begin{align*}
  \bigl|\,|\nabla u_{\mathbf{a}}|^{p-2}\nabla u_{\mathbf{a}}-|\nabla u_{\mathbf{b}}|^{p-2}\nabla u_{\mathbf{b}}\,\bigr| \leq c(|\nabla u_{\mathbf{a}}|^{p-2}+|\nabla u_{\mathbf{b}}|^{p-2})|\nabla u_{\mathbf{a}}-\nabla u_{\mathbf{b}}|
\end{align*}
for $\mathbf{a},\mathbf{b}\in\mathbb{R}^N$ due to \eqref{E:p_Lip}.
Thus, by the Cauchy--Lipschitz theorem, we can get a unique $C^1$ solution to \eqref{E:App_ODE} locally in time, which gives a unique local solution to \eqref{E:Approx}.

\subsection{Energy estimate and weak convergence} \label{SS:Ex_Ene}
Next, we show an energy estimate for $u_N$.
In what follows, we write $c_T$ for a general positive constant depending only on $T$.

\begin{proposition} \label{P:Est_Ave}
  For all $t\in[0,T]$ such that $u_N(t)$ exists, we have
  \begin{align}
    |(u_N(t),1)_{L^2(\Omega_t)}| &\leq c_T, \label{E:Est_Ave} \\
    \|u_N(t)\|_{W^{1,p}(\Omega_t)} &\leq c_T\Bigl(\|\nabla u_N(t)\|_{L^p(\Omega_t)}+1\Bigr). \label{E:Est_W1p}
  \end{align}
\end{proposition}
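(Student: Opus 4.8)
The plan is to obtain \eqref{E:Est_Ave} by testing the Galerkin equation \eqref{E:Approx} against the constant mode $w_1^t$, and then to deduce \eqref{E:Est_W1p} immediately from \eqref{E:Est_Ave} and the uniform-in-time Poincar\'e type inequality of Lemma \ref{L:Cor_UP}. The whole argument is elementary once one notices that the choice $k=1$ is precisely what makes the nonlinear $p$-Laplacian contribution disappear.

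I would first record that, by Lemma \ref{L:Basis}, $w_1^0$ is a nonzero constant function with $\|w_1^0\|_{L^2(\Omega_0)}=1$, so $w_1^0\equiv\pm|\Omega_0|^{-1/2}$; consequently $w_1^t=\phi_tw_1^0$ equals this same constant on $\overline{\Omega_t}$ for every $t\in[0,T]$, whence $\nabla w_1^t\equiv0$ on $\Omega_t$ and $(u_N(t),1)_{L^2(\Omega_t)}=\pm|\Omega_0|^{1/2}\,(u_N(t),w_1^t)_{L^2(\Omega_t)}$. Setting $k=1$ in \eqref{E:Approx} kills both $([|\nabla u_N|^{p-2}\nabla u_N](t),\nabla w_1^t)_{L^2(\Omega_t)}$ and $(u_N(t),[\mathbf{v}_\Omega\cdot\nabla w_1^t](t))_{L^2(\Omega_t)}$, leaving
\[
  \bigl(\partial^\bullet u_N(t),w_1^t\bigr)_{L^2(\Omega_t)}+\bigl(u_N(t),[w_1^t\,\mathrm{div}\,\mathbf{v}_\Omega](t)\bigr)_{L^2(\Omega_t)}=\langle f_N(t),w_1^t\rangle_{W^{1,p}(\Omega_t)}.
\]
Since $u_Nw_1^t\in C^1(\overline{Q_T})$ on the interval of existence (a finite linear combination of the $C^1$ functions $w_k^{(\cdot)}$ with $C^1$ coefficients $\alpha_N^k$, times a constant) and $\partial^\bullet w_1^t=0$ by \eqref{E:MT_EBF}, the Reynolds transport theorem applied to $u_Nw_1^t$ identifies the left-hand side with $\frac{d}{dt}(u_N(t),w_1^t)_{L^2(\Omega_t)}$.

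Integrating this identity from $0$ to $t$ and using $u_N(0)=u_{0,N}$ together with $(u_{0,N},w_1^0)_{L^2(\Omega_0)}=(u_0,w_1^0)_{L^2(\Omega_0)}$ (as $w_1^0$ is one of the orthonormal basis functions defining $u_{0,N}$), I obtain $(u_N(t),w_1^t)_{L^2(\Omega_t)}=(u_0,w_1^0)_{L^2(\Omega_0)}+\int_0^t\langle f_N(s),w_1^s\rangle_{W^{1,p}(\Omega_s)}\,ds$. The first term is bounded by $\|u_0\|_{L^2(\Omega_0)}$; for the integral I would use that $w_1^s$ is constant, so $\|w_1^s\|_{W^{1,p}(\Omega_s)}=|\Omega_0|^{-1/2}|\Omega_s|^{1/p}\leq c$ by \eqref{E:Volume}, and then H\"older's inequality in $t$ together with \eqref{E:fN_StCo} to bound it by $cT^{1/p}\|f_N\|_{L_{[W^{1,p}]^\ast}^{p'}}\leq c_T$. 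This gives $|(u_N(t),w_1^t)_{L^2(\Omega_t)}|\leq c_T$ and hence \eqref{E:Est_Ave}. For \eqref{E:Est_W1p} I would simply apply Lemma \ref{L:Cor_UP} with $q=p$ to $u_N(t)\in W^{1,p}(\Omega_t)$, obtaining $\|u_N(t)\|_{L^p(\Omega_t)}\leq c(\|\nabla u_N(t)\|_{L^p(\Omega_t)}+|(u_N(t),1)_{L^2(\Omega_t)}|)\leq c_T(\|\nabla u_N(t)\|_{L^p(\Omega_t)}+1)$ by \eqref{E:Est_Ave}, and then add $\|\nabla u_N(t)\|_{L^p(\Omega_t)}$ to both sides and recall $\|u_N(t)\|_{W^{1,p}(\Omega_t)}=\|u_N(t)\|_{L^p(\Omega_t)}+\|\nabla u_N(t)\|_{L^p(\Omega_t)}$.

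I do not expect a genuine obstacle here: the only points requiring a little care are the observation that $k=1$ is the right test index (because $\nabla w_1^t=0$ removes both the nonlinearity and the advection term, so the balance for $\frac{d}{dt}(u_N,w_1^t)_{L^2(\Omega_t)}$ is linear and closed), and the justification — via the smoothness of $u_N$ in space-time on its interval of existence — that the Reynolds transport theorem applies to the product $u_Nw_1^t$.
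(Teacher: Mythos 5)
Your proposal is correct and follows essentially the same route as the paper: test \eqref{E:Approx} with $k=1$, observe that $\nabla w_1^t\equiv 0$ kills both the $p$-Laplacian and advection terms, use $\partial^\bullet w_1^t=0$ and a transport identity to recognize the left side as $\tfrac{d}{dt}(u_N(t),w_1^t)_{L^2(\Omega_t)}$, integrate, bound the data terms, and then apply Lemma \ref{L:Cor_UP}. The only cosmetic difference is that you invoke the classical Reynolds transport theorem directly for the smooth product $u_Nw_1^t$, whereas the paper cites the abstract formula \eqref{E:Trans}; for the $C^1$ Galerkin approximants these are the same statement.
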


\begin{proof}
  Since $w_1^0$ is a nonzero constant, so is $w_1^t=\phi_tw_1^0=w_1^0$.
  Hence, we have
  \begin{align*}
    \bigl(\partial^\bullet u_N(t),1\bigr)_{L^2(\Omega_t)}+\bigl(u_N(t),\mathrm{div}\,\mathbf{v}_\Omega(t)\bigr)_{L^2(\Omega_t)} = \langle f_N(t),1\rangle_{W^{1,p}(\Omega_t)}
  \end{align*}
  by dividing \eqref{E:Approx} with $k=1$ by $w_1^t$ and using $\nabla w_1^t=0$.
  We integrate both sides over $(0,t)$ and use \eqref{E:Trans}.
  Then, since $\partial^\bullet\psi=0$ for $\psi\equiv1$, we find that
  \begin{align} \label{Pf_EA:equ}
    (u_N(t),1)_{L^2(\Omega_t)} = (u_{0,N},1)_{L^2(\Omega_0)}+\int_0^t\langle f_N(s),1\rangle_{W^{1,p}(\Omega_s)}\,ds.
  \end{align}
  Moreover, by H\"{o}lder's inequality and \eqref{E:u0N_L2}, we see that
  \begin{align*}
    |(u_{0,N},1)_{L^2(\Omega_0)}| \leq |\Omega_0|^{1/2}\|u_{0,N}\|_{L^2(\Omega_0)} \leq |\Omega_0|^{1/2}\|u_0\|_{L^2(\Omega_0)}.
  \end{align*}
  Also, by $\|1\|_{W^{1,p}(\Omega_s)}=\|1\|_{L^p(\Omega_s)}=|\Omega_s|^{1/p}$, \eqref{E:Volume}, H\"{o}lder's inequality, and \eqref{E:fN_StCo},
  \begin{align*}
    \left|\int_0^t\langle f_N(s),1\rangle_{W^{1,p}(\Omega_s)}\,ds\right| &\leq \int_0^T\|f_N(s)\|_{[W^{1,p}(\Omega_s)]^\ast}\|1\|_{W^{1,p}(\Omega_s)}\,ds \\
    &\leq c\int_0^T\|f_N(s)\|_{[W^{1,p}(\Omega_s)]^\ast}\,ds \\
    &\leq cT^{1/p}\|f_N\|_{L_{[W^{1,p}]^\ast}^{p'}} \leq cT^{1/p}.
  \end{align*}
  Applying these estimates to \eqref{Pf_EA:equ}, we get \eqref{E:Est_Ave}.
  Also,
  \begin{align*}
    \|u_N(t)\|_{L^p(\Omega_t)} &\leq c\Bigl(\|\nabla u_N(t)\|_{L^p(\Omega_t)}+|(u_N(t),1)_{L^2(\Omega_t)}|\Bigr) \\
    &\leq c_T\Bigl(\|\nabla u_N(t)\|_{L^p(\Omega_t)}+1\Bigr)
  \end{align*}
  by \eqref{E:Cor_UP} and \eqref{E:Est_Ave}.
  Hence, \eqref{E:Est_W1p} follows.
\end{proof}

\begin{proposition} \label{P:Energy}
  The solution $u_N(t)$ to \eqref{E:Approx} exists on $[0,T]$ and
  \begin{align} \label{E:Energy}
    \|u_N(t)\|_{L^2(\Omega_t)}^2+\int_0^t\|\nabla u_N(s)\|_{L^p(\Omega_s)}^p\,ds \leq c_T \quad\text{for all}\quad t\in[0,T].
  \end{align}
\end{proposition}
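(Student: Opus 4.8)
The plan is to derive the energy estimate \eqref{E:Energy} by testing the approximate system \eqref{E:Approx} with the coefficients $\alpha_N^k(t)$ themselves, which amounts to testing with $u_N(t)$; once we have the a priori bound, global existence on $[0,T]$ follows from the standard continuation principle for the ODE system \eqref{E:App_ODE}. More precisely, multiply the $k$-th equation in \eqref{E:Approx} by $\alpha_N^k(t)$ and sum over $k=1,\dots,N$. Since $u_N(t)=\sum_{k=1}^N\alpha_N^k(t)w_k^t$ and $\partial^\bullet w_k^t=0$ by \eqref{E:MT_EBF}, the material derivative $\partial^\bullet u_N=\sum_k(d\alpha_N^k/dt)\,w_k^t$ is again a linear combination of the $w_k^t$, so the summation reconstructs $u_N(t)$ in every slot. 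This yields
\begin{multline*}
  \bigl(\partial^\bullet u_N(t),u_N(t)\bigr)_{L^2(\Omega_t)}+\|\nabla u_N(t)\|_{L^p(\Omega_t)}^p \\
  +\bigl(u_N(t),[\mathbf{v}_\Omega\cdot\nabla u_N+u_N\,\mathrm{div}\,\mathbf{v}_\Omega](t)\bigr)_{L^2(\Omega_t)} = \langle f_N(t),u_N(t)\rangle_{W^{1,p}(\Omega_t)}.
\end{multline*}

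Next I would convert the first term into a time derivative of the $L^2$-norm. Here the relevant identity is the Reynolds transport formula in the form \eqref{E:Trans} (or \eqref{E:Tr_Mult} with $\theta\equiv1$): since $u_N$ is smooth in time on its interval of existence, $\frac{d}{dt}\|u_N(t)\|_{L^2(\Omega_t)}^2 = 2\bigl(\partial^\bullet u_N(t),u_N(t)\bigr)_{L^2(\Omega_t)}+\bigl(u_N(t),[u_N\,\mathrm{div}\,\mathbf{v}_\Omega](t)\bigr)_{L^2(\Omega_t)}$. Substituting, the $\mathrm{div}\,\mathbf{v}_\Omega$ contributions partially combine and we obtain
\begin{multline*}
  \frac{1}{2}\frac{d}{dt}\|u_N(t)\|_{L^2(\Omega_t)}^2+\|\nabla u_N(t)\|_{L^p(\Omega_t)}^p \\
  = \langle f_N(t),u_N(t)\rangle_{W^{1,p}(\Omega_t)}-\bigl(u_N(t),[\mathbf{v}_\Omega\cdot\nabla u_N+\tfrac{1}{2}u_N\,\mathrm{div}\,\mathbf{v}_\Omega](t)\bigr)_{L^2(\Omega_t)}.
\end{multline*}
Now every term on the right is estimated. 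For the forcing term, use the duality bound $|\langle f_N(t),u_N(t)\rangle_{W^{1,p}(\Omega_t)}|\leq\|f_N(t)\|_{[W^{1,p}(\Omega_t)]^\ast}\|u_N(t)\|_{W^{1,p}(\Omega_t)}$, then \eqref{E:Est_W1p} to replace $\|u_N(t)\|_{W^{1,p}(\Omega_t)}$ by $c_T(\|\nabla u_N(t)\|_{L^p(\Omega_t)}+1)$, then Young's inequality with exponents $p',p$ to absorb $\delta\|\nabla u_N(t)\|_{L^p(\Omega_t)}^p$ into the left side at the cost of $c_\delta\|f_N(t)\|_{[W^{1,p}(\Omega_t)]^\ast}^{p'}+\dots$, whose time integral is controlled by $\|f_N\|_{L_{[W^{1,p}]^\ast}^{p'}}^{p'}\leq c$ via \eqref{E:fN_StCo}. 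For the $\mathbf{v}_\Omega$-terms, use the uniform bounds $|\mathbf{v}_\Omega|,|\nabla\mathbf{v}_\Omega|\leq c$ from \eqref{E:Vel_Bd} together with Cauchy--Schwarz to bound them by $c(\|u_N(t)\|_{L^2(\Omega_t)}^2+\|\nabla u_N(t)\|_{L^2(\Omega_t)}\|u_N(t)\|_{L^2(\Omega_t)})\leq c\|u_N(t)\|_{L^2(\Omega_t)}^2+c\|\nabla u_N(t)\|_{L^2(\Omega_t)}^2$, and then invoke Lemma \ref{L:Lp_L2} (inequality \eqref{E:Lp_L2}) to dominate $\|\nabla u_N(t)\|_{L^2(\Omega_t)}^2\leq\delta\|\nabla u_N(t)\|_{L^p(\Omega_t)}^p+c_\delta$, again absorbing the $\delta$-term on the left.

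After these absorptions we arrive at a differential inequality of the form $\frac{d}{dt}\|u_N(t)\|_{L^2(\Omega_t)}^2+\|\nabla u_N(t)\|_{L^p(\Omega_t)}^p\leq c\|u_N(t)\|_{L^2(\Omega_t)}^2+h_N(t)$ with $\|h_N\|_{L^1(0,T)}\leq c_T$ uniformly in $N$, and with $\|u_N(0)\|_{L^2(\Omega_0)}=\|u_{0,N}\|_{L^2(\Omega_0)}\leq\|u_0\|_{L^2(\Omega_0)}$ by \eqref{E:u0N_L2}. Grönwall's inequality then gives the uniform bound on $\sup_t\|u_N(t)\|_{L^2(\Omega_t)}^2$, and integrating the differential inequality once more yields the bound on $\int_0^t\|\nabla u_N(s)\|_{L^p(\Omega_s)}^p\,ds$; together these are precisely \eqref{E:Energy}. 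Finally, global existence: the a priori $L^2$-bound on $u_N(t)$ combined with \eqref{E:Uni_PD} (which controls $|\bm\alpha_N(t)|$ by $c_0^{-1}\mathbf{a}\cdot[\mathbf{M}_N(t)\mathbf{a}]=c_0^{-1}\|u_N(t)\|_{L^2(\Omega_t)}^2$) shows $|\bm\alpha_N(t)|$ stays bounded on any maximal interval of existence, so the local solution from the Cauchy--Lipschitz theorem cannot blow up and extends to all of $[0,T]$. I expect the only mildly delicate point to be bookkeeping the various $\delta$'s so that all gradient terms with $p$- or $2$-integrability are genuinely absorbed by the single good term $\|\nabla u_N(t)\|_{L^p(\Omega_t)}^p$ on the left; the structure of the argument is otherwise the classical Galerkin energy estimate, with \eqref{E:Trans}, \eqref{E:Est_W1p}, \eqref{E:Lp_L2}, \eqref{E:Vel_Bd}, and \eqref{E:fN_StCo} supplying exactly the ingredients needed for the moving-domain setting.
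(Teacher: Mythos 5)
Your proposal is correct and follows essentially the same route as the paper: test \eqref{E:Approx} with $\alpha_N^k(t)$, use \eqref{E:Trans} to produce the time derivative of $\|u_N(t)\|_{L^2(\Omega_t)}^2$, estimate the forcing term via \eqref{E:Est_W1p} and Young's inequality, absorb the $\mathbf{v}_\Omega$-terms using \eqref{E:Vel_Bd} and \eqref{E:Lp_L2}, invoke Gr\"onwall together with \eqref{E:u0N_L2} and \eqref{E:fN_StCo}, and finally use \eqref{E:Uni_PD} to bound $|\bm{\alpha}_N(t)|$ and extend the local solution to $[0,T]$. The only superficial difference is that you work with a pointwise-in-time differential inequality and then integrate, while the paper integrates first and estimates the resulting $I_1,I_2,I_3$; these are the same computation.
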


\begin{proof}
  We see by \eqref{E:Trans} that
  \begin{align*}
    \bigl(\partial^\bullet u_N(t),u_N(t)\bigr)_{L^2(\Omega_t)} = \frac{1}{2}\frac{d}{dt}\|u_N(t)\|_{L^2(\Omega_t)}^2-\frac{1}{2}\bigl(u_N(t),[u_N\,\mathrm{div}\,\mathbf{v}_\Omega](t)\bigr)_{L^2(\Omega_t)}.
  \end{align*}
  Also, we multiply \eqref{E:Approx} by $\alpha_N^k(t)$ and sum over $k=1,\dots,N$ to get
  \begin{multline*}
    \bigl(\partial^\bullet u_N(t),u_N(t)\bigr)_{L^2(\Omega_t)}+\|\nabla u_N(t)\|_{L^p(\Omega_t)}^p \\
    +\bigl(u_N(t),[\mathbf{v}_\Omega\cdot\nabla u_N+u_N\,\mathrm{div}\,\mathbf{v}_\Omega](t)\bigr)_{L^2(\Omega_t)} = \langle f_N(t),u_N(t)\rangle_{W^{1,p}(\Omega_t)}.
  \end{multline*}
  We combine these relations and integrate over $(0,t)$ to find that
  \begin{align} \label{Pf_Ener:Int}
    \frac{1}{2}\|u_N(t)\|_{L^2(\Omega_t)}^2+\int_0^t\|\nabla u_N(s)\|_{L^p(\Omega_s)}^p\,ds = \frac{1}{2}\|u_{0,N}\|_{L^2(\Omega_0)}^2+\sum_{\ell=1}^3I_\ell(t),
  \end{align}
  where
  \begin{align*}
    I_1(t) &:= \int_0^t\langle f_N(s),u_N(s)\rangle_{W^{1,p}(\Omega_s)}\,ds, \\
    I_2(t) &:= -\int_0^t\bigl(u_N(s),[\mathbf{v}_\Omega\cdot\nabla u_N](s)\bigr)_{L^2(\Omega_s)}\,ds, \\
    I_3(t) &:= -\frac{1}{2}\int_0^t\bigl(u_N(s),[u_N\,\mathrm{div}\,\mathbf{v}_\Omega](s)\bigr)_{L^2(\Omega_s)}\,ds.
  \end{align*}
  To $I_1$, we apply \eqref{E:Est_W1p} and Young's inequality.
  Then, we have
  \begin{align*}
    |I_1(t)| &\leq c_T\int_0^t\|f_N(s)\|_{[W^{1,p}(\Omega_s)]^\ast}\Bigl(\|\nabla u_N(s)\|_{L^p(\Omega_s)}+1\Bigr)\,ds \\
    &\leq \frac{1}{4}\int_0^t\|\nabla u_N(s)\|_{L^p(\Omega_s)}^p\,ds+c_T\int_0^t\Bigl(\|f_N(s)\|_{[W^{1,p}(\Omega_s)]^\ast}^{p'}+1\Bigr)\,ds.
  \end{align*}
  Next, we use \eqref{E:Vel_Bd}, H\"{o}lder's and Young's inequalities, and \eqref{E:Lp_L2} to $I_2$ to get
  \begin{align*}
    |I_2(t)| &\leq c\int_0^t\Bigl(\|u_N(t)\|_{L^2(\Omega_s)}^2+\|\nabla u_N(s)\|_{L^2(\Omega_s)}^2\Bigr)\,ds \\
    &\leq c\delta\int_0^t\|\nabla u_N(s)\|_{L^p(\Omega_s)}^p\,ds+c\int_0^t\Bigl(\|u_N(t)\|_{L^2(\Omega_s)}^2+c_\delta\Bigr)\,ds
  \end{align*}
  for any $\delta>0$.
  In particular, letting $c\delta=1/4$, we obtain
  \begin{align*}
    |I_2(t)| \leq \frac{1}{4}\int_0^t\|\nabla u_N(s)\|_{L^p(\Omega_s)}^p\,ds+c\int_0^t\Bigl(\|u_N(t)\|_{L^2(\Omega_s)}^2+1\Bigr)\,ds.
  \end{align*}
  We also observe by \eqref{E:Vel_Bd} that
  \begin{align*}
    |I_3(t)| \leq c\int_0^t\|u_N(s)\|_{L^2(\Omega_s)}^2\,ds.
  \end{align*}
  Applying these estimates and \eqref{E:u0N_L2} to \eqref{Pf_Ener:Int}, we find that
  \begin{multline*}
    \frac{1}{2}\|u_N(t)\|_{L^2(\Omega_t)}^2+\int_0^t\|\nabla u_N(s)\|_{L^p(\Omega_s)}^p\,ds \\
    \leq \frac{1}{2}\|u_0\|_{L^2(\Omega_0)}^2+\frac{1}{2}\int_0^t\|\nabla u_N(s)\|_{L^p(\Omega_s)}^p\,ds \\ +c_T\int_0^t\Bigl(\|u_N(s)\|_{L^2(\Omega_s)}^2+\|f_N(s)\|_{[W^{1,p}(\Omega_s)]^\ast}^{p'}+1\Bigr)\,ds,
  \end{multline*}
  from which we further deduce that
  \begin{multline*}
    \|u_N(t)\|_{L^2(\Omega_t)}^2+\int_0^t\|\nabla u_N(s)\|_{L^p(\Omega_s)}^p\,ds \\
    \leq \|u_0\|_{L^2(\Omega_0)}^2+c_T\int_0^t\Bigl(\|u_N(s)\|_{L^2(\Omega_s)}^2+\|f_N(s)\|_{[W^{1,p}(\Omega_s)]^\ast}^{p'}+1\Bigr)\,ds.
  \end{multline*}
  Thus, it follows from Gronwall's inequality and
  \begin{align*}
    \int_0^t\Bigl(\|f_N(s)\|_{[W^{1,p}(\Omega_s)]^\ast}^{p'}+1\Bigr)\,ds \leq \|f_N\|_{L_{[W^{1,p}]^\ast}^{p'}}^{p'}+T \leq c+T
  \end{align*}
  by \eqref{E:fN_StCo} that the estimate \eqref{E:Energy} holds as long as $u_N(t)$ exists.
  Then, since
  \begin{align*}
    |\bm{\alpha}_N(t)|^2 \leq c\bm{\alpha}_N(t)\cdot[\mathbf{M}_N(t)\bm{\alpha}_N(t)] = c\|u_N(t)\|_{L^2(\Omega_t)}^2 \leq c_T
  \end{align*}
  by \eqref{E:Uni_PD} and \eqref{E:Energy}, it follows that the solution $\bm{\alpha}_N(t)$ to \eqref{E:App_ODE} is bounded uniformly in $t$, and thus it extends to the whole time interval $[0,T]$.
  Hence, the solution $u_N(t)$ to \eqref{E:Approx} exists on $[0,T]$ and the estimate \eqref{E:Energy} holds for all $t\in[0,T]$.
\end{proof}

Now, we observe by \eqref{E:Est_W1p}, \eqref{E:Energy} and
\begin{align} \label{E:GruN_pd}
  \int_0^T\bigl\|\,[|\nabla u_N|^{p-2}\nabla u_N](t)\,\bigr\|_{L^{p'}(\Omega_t)}^{p'}\,dt = \int_0^T\|\nabla u_N(t)\|_{L^p(\Omega_t)}^p\,dt \leq c_T
\end{align}
that there exist functions $u\in L_{L^2}^\infty\cap L_{W^{1,p}}^p$ and $\mathbf{w}\in[L_{L^{p'}}^{p'}]^n$ such that
\begin{align} \label{E:WeConv}
  \begin{alignedat}{2}
    \lim_{N\to\infty}u_N &= u &\quad &\text{weakly-$\ast$ in $L_{L^2}^\infty$ and weakly in $L_{W^{1,p}}^p$}, \\
    \lim_{N\to\infty}|\nabla u_N|^{p-2}\nabla u_N &= \mathbf{w} &\quad &\text{weakly in $[L_{L^{p'}}^{p'}]^n$}
  \end{alignedat}
\end{align}
up to a subsequence.
It turns out later that $u$ is a weak solution to \eqref{E:pLap_MoDo}, which is unique by Proposition \ref{P:Uni}.
Thus, the convergence \eqref{E:WeConv} is in fact valid for the whole sequence.
We do not repeat this remark in the following discussions.

\subsection{Limit weak forms} \label{SS:Ex_WF}
Let us derive weak forms satisfied by $u$ and $\mathbf{w}$.
In what follows, we sometimes suppress the time variable of functions for the sake of simplicity.

\begin{proposition} \label{P:Lim_WF}
  For all $\psi\in\mathcal{D}_{W^{1,p}}$, we have
  \begin{multline} \label{E:LWF_01}
    -\int_0^T(u,\partial^\bullet\psi)_{L^2(\Omega_t)}\,dt+\int_0^T(\mathbf{w},\nabla\psi)_{L^2(\Omega_t)}\,dt \\
    +\int_0^T(u,\mathbf{v}_\Omega\cdot\nabla\psi)_{L^2(\Omega_t)}\,dt = \int_0^T\langle f,\psi\rangle_{W^{1,p}(\Omega_t)}\,dt.
  \end{multline}
\end{proposition}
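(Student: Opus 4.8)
The plan is to pass to the limit $N\to\infty$ in the Galerkin equation \eqref{E:Approx} after testing it against a fixed, suitably smooth time-dependent coefficient. Fix $\psi\in\mathcal{D}_{W^{1,p}}$. Recall that $\psi$ corresponds to $\Psi:=\phi_{-(\cdot)}\psi(\cdot)\in\mathcal{D}(0,T;W^{1,p}(\Omega_0))$. I would first approximate $\Psi$ in $\mathcal{D}(0,T;W^{1,p}(\Omega_0))$ by finite linear combinations of the basis functions: since $\mathcal{L}(\{w_k^0\}_k)$ is dense in $W^{1,p}(\Omega_0)$ and $\Psi$ has compact support in $t$ with values in $W^{1,p}(\Omega_0)$, a standard argument (partition of the support, uniform continuity in $t$ valued in $W^{1,p}$) produces, for each $m$, a function $\Psi_m(t)=\sum_{k=1}^{K_m}\beta_k^m(t)w_k^0$ with $\beta_k^m\in\mathcal{D}(0,T)$ such that $\Psi_m\to\Psi$ in $C([0,T];W^{1,p}(\Omega_0))$, hence $\psi_m:=\phi_{(\cdot)}\Psi_m(\cdot)=\sum_k\beta_k^m w_k^{(\cdot)}\to\psi$ in $L_{W^{1,p}}^p$; moreover $\partial^\bullet\psi_m=\sum_k(\beta_k^m)'w_k^{(\cdot)}\to\partial^\bullet\psi$ as well, using $\partial^\bullet w_k^t=0$ from \eqref{E:MT_EBF} and the time-regularity \eqref{E:EBF_Reg}.

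Next, for $N\geq K_m$ I would multiply \eqref{E:Approx} by $\beta_k^m(t)$, sum over $k=1,\dots,K_m$, and integrate over $(0,T)$. On the left the material-derivative term becomes $\int_0^T(\partial^\bullet u_N,\psi_m)_{L^2(\Omega_t)}\,dt$, which I integrate by parts in time via the Reynolds transport identity: since $\psi_m\in\mathcal{D}_{W^{1,p}}$ vanishes near $t=0,T$, $\frac{d}{dt}(u_N,\psi_m)_{L^2(\Omega_t)}=(\partial^\bullet u_N,\psi_m)+(u_N,\partial^\bullet\psi_m)+(u_N,\psi_m\,\mathrm{div}\,\mathbf v_\Omega)$, so $\int_0^T(\partial^\bullet u_N,\psi_m)\,dt=-\int_0^T\bigl[(u_N,\partial^\bullet\psi_m)+(u_N,\psi_m\,\mathrm{div}\,\mathbf v_\Omega)\bigr]\,dt$. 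The resulting identity reads
\begin{multline*}
  -\int_0^T(u_N,\partial^\bullet\psi_m)_{L^2(\Omega_t)}\,dt+\int_0^T([|\nabla u_N|^{p-2}\nabla u_N],\nabla\psi_m)_{L^2(\Omega_t)}\,dt \\
  +\int_0^T(u_N,\mathbf v_\Omega\cdot\nabla\psi_m)_{L^2(\Omega_t)}\,dt = \int_0^T\langle f_N,\psi_m\rangle_{W^{1,p}(\Omega_t)}\,dt,
\end{multline*}
where the $\psi_m\,\mathrm{div}\,\mathbf v_\Omega$ contributions from the transport formula and from the boundary term in \eqref{E:Approx} cancel. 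Now I let $N\to\infty$ with $m$ fixed: the first and third terms converge by the weak convergence $u_N\rightharpoonup u$ in $L_{L^2}^2$ (coming from \eqref{E:WeConv}) tested against the fixed $L_{L^2}^2$ functions $\partial^\bullet\psi_m$, $\mathbf v_\Omega\cdot\nabla\psi_m$; the gradient term converges by $|\nabla u_N|^{p-2}\nabla u_N\rightharpoonup\mathbf w$ in $[L_{L^{p'}}^{p'}]^n$ tested against $\nabla\psi_m\in[L_{L^p}^p]^n$; the right side converges by $f_N\to f$ strongly in $L_{[W^{1,p}]^\ast}^{p'}$ (from \eqref{E:fN_StCo}) paired with $\psi_m$ in $L_{W^{1,p}}^p$. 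This yields \eqref{E:LWF_01} with $\psi$ replaced by $\psi_m$.

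Finally I let $m\to\infty$. Every term is of the form of a fixed functional ($u\in L_{L^2}^2$, $\mathbf w\in[L_{L^{p'}}^{p'}]^n$, $f\in L_{[W^{1,p}]^\ast}^{p'}$) paired against $\psi_m$, $\nabla\psi_m$, $\partial^\bullet\psi_m$ or $\mathbf v_\Omega\cdot\nabla\psi_m$, and the convergences $\psi_m\to\psi$, $\partial^\bullet\psi_m\to\partial^\bullet\psi$ in $L_{W^{1,p}}^p$ (hence in $L_{L^2}^2$ by $p>2$ and \eqref{E:Volume}) established in the first step pass to the limit by Hölder's inequality together with \eqref{E:Vel_Bd}. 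This gives \eqref{E:LWF_01}. I expect the main obstacle to be the first step: constructing the approximation $\psi_m$ of $\psi$ by basis functions simultaneously in $\psi$ and in $\partial^\bullet\psi$, i.e. checking that one can approximate in $C([0,T];W^{1,p}(\Omega_0))$ together with the time derivative while keeping coefficients in $\mathcal D(0,T)$ — this requires care with the compact time support and uniform continuity, but is otherwise routine; all the limit passages afterwards are soft functional-analytic arguments using only weak convergence against fixed test objects, so no monotonicity trick is needed here (that enters later when identifying $\mathbf w=|\nabla u|^{p-2}\nabla u$).
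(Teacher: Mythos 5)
Your proposal is essentially the same argument as the paper's: test the Galerkin equation against finite-rank time-dependent test functions built from the $w_k^t$, use the transport identity \eqref{E:Trans} to integrate by parts in time, pass $N\to\infty$ via \eqref{E:WeConv} and \eqref{E:fN_StCo}, then pass to general $\psi\in\mathcal{D}_{W^{1,p}}$ by density. (The paper takes the $N\to\infty$ limit for fixed $\theta\,w_k^t$ first and then runs the density argument in $M$; you construct $\psi_m$ first, send $N\to\infty$, then $m\to\infty$ — the order is immaterial.) The one place where your write-up hand-waves is precisely where the paper leans on an external reference: you correctly identify that the obstacle is the \emph{simultaneous} approximation of $\Psi$ and $\partial_t\Psi$ by elements of $\mathcal{D}(0,T)\otimes\mathcal{L}(\{w_k^0\}_k)$, but the mechanism you sketch, a partition of the time support combined with uniform continuity, produces an approximation of $\Psi$ in $C([0,T];W^{1,p}(\Omega_0))$ whose time derivatives blow up on the transition zones, so it does \emph{not} yield $\partial_t\Psi_m\to\partial_t\Psi$. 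One needs a construction respecting derivatives, e.g.\ a mollified Riemann-sum (tensor-product) approximation; the paper invokes \cite[Lemma 2.2]{Mas84} for exactly this. With that substitution the proof goes through as you describe.
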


\begin{proof}
  Let $k\in\mathbb{N}$ and $\theta\in\mathcal{D}(0,T)$.
  For $N\geq k$, we multiply \eqref{E:Approx} by $\theta(t)$, integrate over $(0,T)$, and use \eqref{E:Trans} and $\theta(0)=\theta(T)=0$.
  Then, we have
  \begin{multline*}
    -\int_0^T\bigl(u_N,\partial^\bullet(\theta w_k^t)\bigr)_{L^2(\Omega_t)}\,dt+\int_0^T\bigl(|\nabla u_N|^{p-2}\nabla u_N,\nabla(\theta w_k^t)\bigr)_{L^2(\Omega_t)}\,dt \\
    +\int_0^T\bigl(u_N,\mathbf{v}_\Omega\cdot\nabla(\theta w_k^t)\bigr)_{L^2(\Omega_t)}\,dt = \int_0^T\langle f_N,\theta w_k^t\rangle_{W^{1,p}(\Omega_t)}\,dt.
  \end{multline*}
  We send $N\to\infty$ and apply \eqref{E:fN_StCo} and \eqref{E:WeConv}.
  Then, we have
  \begin{multline} \label{Pf_LWF:twi}
    -\int_0^T\bigl(u,\partial^\bullet(\theta w_k^t)\bigr)_{L^2(\Omega_t)}\,dt+\int_0^T\bigl(\mathbf{w},\nabla(\theta w_k^t)\bigr)_{L^2(\Omega_t)}\,dt \\
    +\int_0^T\bigl(u,\mathbf{v}_\Omega\cdot\nabla(\theta w_k^t)\bigr)_{L^2(\Omega_t)}\,dt = \int_0^T\langle f,\theta w_k^t\rangle_{W^{1,p}(\Omega_t)}\,dt.
  \end{multline}
  Now, let $\psi\in\mathcal{D}_{W^{1,p}}$.
  By the definitions of $\mathcal{D}_{W^{1,p}}$ and the (strong) material derivative,
  \begin{align*}
    \Psi := \phi_{-(\cdot)}\psi(\cdot) \in \mathcal{D}(0,T;W^{1,p}(\Omega_0)), \quad \partial_t\Psi = \phi_{-(\cdot)}[\partial^\bullet\psi(\cdot)].
  \end{align*}
  Since $\mathcal{L}(\{w_k^0\}_k)$ is dense in $W^{1,p}(\Omega_0)$ (see Lemma \ref{L:Basis}), we can take functions
  \begin{align*}
    \Psi_M(t) = \sum_{\ell=1}^M\theta_M^\ell(t)z_{M,\ell}^0 \quad\text{on}\quad \Omega_0, \quad \theta_M^\ell\in\mathcal{D}(0,T), \quad z_{M,\ell}^0\in\mathcal{L}(\{w_k^0\}_k)
  \end{align*}
  for $M\in\mathbb{N}$ such that
  \begin{align} \label{Pf_LWF:Psi}
    \lim_{M\to\infty}\|\Psi-\Psi_M\|_{L^p(0,T;W^{1,p}(\Omega_0))} = 0, \quad \lim_{K\to\infty}\|\partial_t\Psi-\partial_t\Psi_M\|_{L^2(0,T;W^{1,p}(\Omega_0))} = 0.
  \end{align}
  This can be shown as in \cite[Lemma 2.2]{Mas84}, so we omit the proof here.
  Let
  \begin{align*}
    \psi_M(t) := \phi_t\Psi_M(t) = \sum_{\ell=1}^M\theta_M^\ell(t)z_{M,\ell}^t \quad\text{on}\quad \Omega_t, \quad z_{M,\ell}^t := \phi_tz_{M,\ell}^0 \in \mathcal{L}(\{w_k^t\}_k).
  \end{align*}
  Then, by $\partial_t\Psi_M=\phi_{-(\cdot)}[\partial^\bullet\psi_M(\cdot)]$, \eqref{E:LBq_Equi}, and \eqref{Pf_LWF:Psi}, we have
  \begin{align*}
    \lim_{K\to\infty}\|\psi-\psi_M\|_{L_{W^{1,p}}^p} = 0, \quad \lim_{K\to\infty}\|\partial^\bullet\psi-\partial^\bullet\psi_M\|_{L_{W^{1,p}}^2} = 0.
  \end{align*}
  We see that \eqref{E:LWF_01} holds for each $\psi_M$, since \eqref{Pf_LWF:twi} is valid for all $k\in\mathbb{N}$ and $\theta\in\mathcal{D}(0,T)$.
  Letting $M\to\infty$ and using the above convergence, we get \eqref{E:LWF_01} for $\psi$.
\end{proof}

\begin{proposition} \label{P:Lim_Wpp}
  We have $u\in\mathbb{W}^{p,p'}$ and
  \begin{multline} \label{E:LWF_02}
    \int_0^T\langle \partial^\bullet u,\psi\rangle_{W^{1,p}(\Omega_t)}\,dt+\int_0^T(\mathbf{w},\nabla\psi)_{L^2(\Omega_t)}\,dt \\
    +\int_0^T\bigl(u,\mathbf{v}_\Omega\cdot\nabla\psi+\psi\,\mathrm{div}\,\mathbf{v}_\Omega)_{L^2(\Omega_t)}\,dt = \int_0^T\langle f,\psi\rangle_{W^{1,p}(\Omega_t)}\,dt
  \end{multline}
  for all $\psi\in L_{W^{1,p}}^p$.
  Moreover, $u(0)=u_0$ in $L^2(\Omega_0)$.
\end{proposition}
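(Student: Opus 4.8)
The plan is to first identify $\partial^\bullet u$ with an element of $L_{[W^{1,p}]^\ast}^{p'}$ by a duality argument, then deduce \eqref{E:LWF_02} from \eqref{E:LWF_01} by density, and finally recover the initial condition by a time-integration-by-parts argument that compares the limit equation with the Galerkin scheme.

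First I would introduce the linear functional on $L_{W^{1,p}}^p$
\begin{multline*}
  L(\psi) := \int_0^T\langle f(t),\psi(t)\rangle_{W^{1,p}(\Omega_t)}\,dt-\int_0^T\bigl(\mathbf{w}(t),\nabla\psi(t)\bigr)_{L^2(\Omega_t)}\,dt \\
  -\int_0^T\bigl(u(t),[\mathbf{v}_\Omega\cdot\nabla\psi+\psi\,\mathrm{div}\,\mathbf{v}_\Omega](t)\bigr)_{L^2(\Omega_t)}\,dt.
\end{multline*}
Using $f\in L_{[W^{1,p}]^\ast}^{p'}$, $\mathbf{w}\in[L_{L^{p'}}^{p'}]^n$, $u\in L_{L^2}^\infty\cap L_{W^{1,p}}^p$, the bound \eqref{E:Vel_Bd}, the uniform embedding $L^p(\Omega_t)\hookrightarrow L^2(\Omega_t)$ coming from \eqref{E:Volume}, H\"{o}lder's inequality in space and time, and $T<\infty$, one checks $|L(\psi)|\leq c\|\psi\|_{L_{W^{1,p}}^p}$, so $L\in[L_{W^{1,p}}^p]^\ast=L_{[W^{1,p}]^\ast}^{p'}$ and there is $v\in L_{[W^{1,p}]^\ast}^{p'}$ with $L(\psi)=\int_0^T\langle v(t),\psi(t)\rangle_{W^{1,p}(\Omega_t)}\,dt$. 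For $\psi\in\mathcal{D}_{W^{1,p}}$, formula \eqref{E:LWF_01} rearranges to $L(\psi)=-\int_0^T\bigl(u(t),[\partial^\bullet\psi+\psi\,\mathrm{div}\,\mathbf{v}_\Omega](t)\bigr)_{L^2(\Omega_t)}\,dt$, so Definition \ref{D:We_MatDe} gives $\partial^\bullet u=v\in L_{[W^{1,p}]^\ast}^{p'}$ and hence $u\in\mathbb{W}^{p,p'}$. Then \eqref{E:LWF_02} for $\psi\in\mathcal{D}_{W^{1,p}}$ is merely the equality $\int_0^T\langle\partial^\bullet u(t),\psi(t)\rangle_{W^{1,p}(\Omega_t)}\,dt=L(\psi)$ rewritten, and since both sides are bounded linear functionals of $\psi\in L_{W^{1,p}}^p$ while $\mathcal{D}_{W^{1,p}}$ is dense in $L_{W^{1,p}}^p$, \eqref{E:LWF_02} extends to every $\psi\in L_{W^{1,p}}^p$.

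To obtain the initial condition I would fix $\theta\in C^1([0,T])$ with $\theta(0)=1$ and $\theta(T)=0$ and use the test function $\theta w_k^t$. Multiplying \eqref{E:Approx} by $\theta$, integrating over $(0,T)$, applying \eqref{E:Trans} to $\tfrac{d}{dt}(u_N(t),[\theta w_k^t](t))_{L^2(\Omega_t)}$ (legitimate since $u_N$ and $\theta w_k^t$ lie in $\mathbb{W}^{p,p'}$), and invoking $\partial^\bullet w_k^t=0$ from \eqref{E:MT_EBF} together with $u_N(0)=u_{0,N}$, the terms carrying $\mathrm{div}\,\mathbf{v}_\Omega$ cancel and one is left with
\begin{align*}
  -(u_{0,N},w_k^0)_{L^2(\Omega_0)} = \mathcal{R}_N(\theta,w_k),
\end{align*}
where $\mathcal{R}_N(\theta,w_k)$ is the resulting combination of the integrals over $(0,T)$ of $(u_N,\theta'w_k^t)_{L^2(\Omega_t)}$, $([|\nabla u_N|^{p-2}\nabla u_N],\nabla(\theta w_k^t))_{L^2(\Omega_t)}$, $(u_N,\mathbf{v}_\Omega\cdot\nabla(\theta w_k^t))_{L^2(\Omega_t)}$, and $\langle f_N,\theta w_k^t\rangle_{W^{1,p}(\Omega_t)}$. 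Letting $N\to\infty$ and using \eqref{E:u0N_L2}, \eqref{E:fN_StCo}, and \eqref{E:WeConv} yields $-(u_0,w_k^0)_{L^2(\Omega_0)}=\mathcal{R}(\theta,w_k)$, where $\mathcal{R}(\theta,w_k)$ is the same expression with $u_N,|\nabla u_N|^{p-2}\nabla u_N,f_N$ replaced by $u,\mathbf{w},f$. On the other hand, applying \eqref{E:Trans} with $u_1=u$ and $u_2=\theta w_k^t$, integrating over $(0,T)$, and substituting \eqref{E:LWF_02} with $\psi=\theta w_k^t$ gives $-(u(0),w_k^0)_{L^2(\Omega_0)}=\mathcal{R}(\theta,w_k)$ with the very same right-hand side. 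Subtracting, $(u(0)-u_0,w_k^0)_{L^2(\Omega_0)}=0$ for all $k$, and since $\{w_k^0\}_k$ is an orthonormal basis of $L^2(\Omega_0)$ by Lemma \ref{L:Basis}, we conclude $u(0)=u_0$ in $L^2(\Omega_0)$.

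The duality identification of $\partial^\bullet u$ and the density extension of the weak form are routine; the delicate part is the initial-condition step, where one must work with test functions that do not vanish at $t=0$ and carry the resulting time-boundary terms consistently through both the Galerkin identities and \eqref{E:LWF_02}. The identity $\partial^\bullet w_k^t=0$ is the crucial structural fact here: it makes the time-boundary and $\mathrm{div}\,\mathbf{v}_\Omega$ contributions on the two sides align, so that the remainders $\mathcal{R}(\theta,w_k)$ coincide and the comparison closes.
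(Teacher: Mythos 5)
Your proof is correct and follows essentially the same route as the paper: identify $\partial^\bullet u$ via the bounded linear functional $L$ and extend by density, then compare the Galerkin relation with the limit relation using a test function proportional to $\theta w_k^t$ with $\theta(0)=1$, $\theta(T)=0$, exploiting $\partial^\bullet w_k^t=0$ to make the $\mathrm{div}\,\mathbf{v}_\Omega$ contributions cancel. The only cosmetic difference is in the initial-condition step: you test against $\theta w_k^t$ directly and close via completeness of $\{w_k^0\}_k$ in $L^2(\Omega_0)$, whereas the paper tests against $\theta\phi_{(\cdot)}\Psi_{0,M}$ for $M$-approximations of an arbitrary $\Psi_0\in\mathcal{D}(\Omega_0)$ and closes by density; the two are equivalent by linearity.
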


\begin{proof}
  Let $\psi\in\mathcal{D}_{W^{1,p}}$.
  We subtract $\int_0^T(u,\psi\,\mathrm{div}\,\mathbf{v}_\Omega)_{L^2(\Omega_t)}\,dt$ from both sides of \eqref{E:LWF_01} and rewrite the resulting equality as
  \begin{align} \label{Pf_LWp:Dual}
    -\int_0^T(u,\partial^\bullet\psi+\psi\,\mathrm{div}\,\mathbf{v}_\Omega)_{L^2(\Omega_t)}\,dt = L_{u,\mathbf{w}}(\psi),
  \end{align}
  where $L_{u,\mathbf{w}}$ is a linear functional given by
  \begin{multline*}
    L_{u,\mathbf{w}}(\psi) := \int_0^T\langle f,\psi\rangle_{W^{1,p}(\Omega_t)}\,dt-\int_0^T(\mathbf{w},\nabla\psi)_{L^2(\Omega_t)}\,dt \\
    -\int_0^T(u,\mathbf{v}_\Omega\cdot\nabla\psi+\psi\,\mathrm{div}\,\mathbf{v}_\Omega)_{L^2(\Omega_t)}\,dt.
  \end{multline*}
  We observe by \eqref{E:Vel_Bd} and H\"{o}lder's inequality that
  \begin{align*}
    |L_{u,\mathbf{w}}(\psi)| \leq \|f\|_{L_{[W^{1,p}]^\ast}^{p'}}\|\psi\|_{L_{W^{1,p}}^p}+\|\mathbf{w}\|_{L_{L^{p'}}^{p'}}\|\nabla\psi\|_{L_{L^p}^p}+\|u\|_{L_{L^2}^2}\Bigl(\|\psi\|_{L_{L^2}^2}+\|\nabla\psi\|_{L_{L^2}^2}\Bigr).
  \end{align*}
  Also, by $p>2$, H\"{o}lder's inequality, \eqref{E:Volume}, and $T<\infty$, we have
  \begin{align*}
    \|\psi\|_{L_{L^2}^2}+\|\nabla\psi\|_{L_{L^2}^2} \leq c_T\|\psi\|_{L_{W^{1,p}}^p}, \quad \|\nabla\psi\|_{L_{L^p}^p} \leq \|\psi\|_{L_{W^{1,p}}^p}.
  \end{align*}
  Thus, $L_{u,\mathbf{w}}$ extends to a bounded linear functional on $L_{W^{1,p}}^p$ by the density of $\mathcal{D}_{W^{1,p}}$ in $L_{W^{1,p}}^p$.
  In view of $[L_{W^{1,p}}^p]^\ast=L_{[W^{1,p}]^\ast}^{p'}$, \eqref{Pf_LWp:Dual}, and Definition \ref{D:We_MatDe}, this means that
  \begin{align*}
    \partial^\bullet u\in L_{[W^{1,p}]^\ast}^{p'}, \quad  \int_0^T\langle\partial^\bullet u,\psi\rangle_{W^{1,p}(\Omega_t)}\,dt = L_{u,\mathbf{w}}(\psi) \quad\text{for all}\quad \psi\in\mathcal{D}_{W^{1,p}}.
  \end{align*}
  Thus, $u\in\mathbb{W}^{p,p'}$, and \eqref{E:LWF_02} holds for all $\psi\in L_{W^{1,p}}^p$ by the density of $\mathcal{D}_{W^{1,p}}$ in $L_{W^{1,p}}^p$.

  Let us show $u(0)=u_0$.
  Fix any $\Psi_0\in\mathcal{D}(\Omega_0)$.
  Let $\Psi_0^\ell:=(\Psi_0,w_\ell^0)_{L^2(\Omega_0)}$ and
  \begin{align*}
    \Psi_{0,M} := \sum_{\ell=1}^M\Psi_0^\ell w_\ell^0 \quad\text{on}\quad \Omega_0, \quad \lim_{M\to\infty}\|\Psi_0-\Psi_{0,M}\|_{L^2(\Omega_0)} = 0.
  \end{align*}
  Also, let $\theta\in C^1([0,T])$ satisfy $\theta(0)=1$ and $\theta(T)=0$, and let
  \begin{align*}
    \psi_M(t) := \theta(t)[\phi_t\Psi_{0,M}] = \theta(t)\sum_{\ell=1}^M\Psi_0^\ell w_\ell^t \quad\text{on}\quad \Omega_t, \quad t\in[0,T], \quad M\in\mathbb{N}.
  \end{align*}
  Let $N\geq M$.
  We multiply \eqref{E:Approx} by $\theta(t)\Psi_0^k$ and sum over $k=1,\dots,M$.
  Then,
  \begin{multline*}
    (\partial^\bullet u_N,\psi_M)_{L^2(\Omega_t)}+(|\nabla u_N|^{p-2}\nabla u_N,\nabla\psi_M)_{L^2(\Omega_t)} \\
    +(u_N,\mathbf{v}_\Omega\cdot\nabla\psi_M+\psi_M\,\mathrm{div}\,\mathbf{v}_\Omega)_{L^2(\Omega_t)} = \langle f_N,\psi_M\rangle_{W^{1,p}(\Omega_t)}.
  \end{multline*}
  We integrate both sides over $(0,T)$ and use \eqref{E:Trans}, $\theta(0)=1$, and $\theta(T)=0$ to get
  \begin{multline*}
    -(u_{0,N},\Psi_{0,M})_{L^2(\Omega_0)}-\int_0^T(u_N,\partial^\bullet\psi_M)_{L^2(\Omega_t)}\,dt+\int_0^T(|\nabla u_N|^{p-2}\nabla u_N,\nabla\psi_M)_{L^2(\Omega_t)}\,dt \\
    +\int_0^T(u_N,\mathbf{v}_\Omega\cdot\nabla\psi_M)_{L^2(\Omega_t)}\,dt = \int_0^T\langle f_N,\psi_M\rangle_{W^{1,p}(\Omega_t)}\,dt.
  \end{multline*}
  Let $N\to\infty$ in the above equality.
  Then, by \eqref{E:u0N_L2}, \eqref{E:fN_StCo}, and \eqref{E:WeConv}, we have
  \begin{multline*}
    -(u_0,\Psi_{0,M})_{L^2(\Omega_0)}-\int_0^T(u,\partial^\bullet\psi_M)_{L^2(\Omega_t)}\,dt+\int_0^T(\mathbf{w},\nabla\psi_M)_{L^2(\Omega_t)}\,dt \\
    +\int_0^T(u,\mathbf{v}_\Omega\cdot\nabla\psi_M)_{L^2(\Omega_t)}\,dt = \int_0^T\langle f,\psi_M\rangle_{W^{1,p}(\Omega_t)}\,dt.
  \end{multline*}
  On the other hand, setting $\psi=\psi_M$ in \eqref{E:LWF_02} and applying \eqref{E:Trans}, we find that
  \begin{multline*}
    -(u(0),\Psi_{0,M})_{L^2(\Omega_0)}-\int_0^T(u,\partial^\bullet\psi_M)_{L^2(\Omega_t)}\,dt+\int_0^T(\mathbf{w},\nabla\psi_M)_{L^2(\Omega_t)}\,dt \\
    +\int_0^T(u,\mathbf{v}_\Omega\cdot\nabla\psi_M)_{L^2(\Omega_t)}\,dt = \int_0^T\langle f,\psi_M\rangle_{W^{1,p}(\Omega_t)}\,dt
  \end{multline*}
  by $\theta(0)=1$ and $\theta(T)=0$.
  From the above relations, we deduce that
  \begin{align*}
    (u(0),\Psi_{0,M})_{L^2(\Omega_0)} = (u_0,\Psi_{0,M})_{L^2(\Omega_0)} \quad\text{for all}\quad M\in\mathbb{N}.
  \end{align*}
  Thus, letting $M\to\infty$ and using $\Psi_{0,M}\to\Psi_0$ strongly in $L^2(\Omega_0)$, we obtain
  \begin{align*}
    (u(0),\Psi_0)_{L^2(\Omega_0)} = (u_0,\Psi_0)_{L^2(\Omega_0)} \quad\text{for all}\quad \Psi_0\in\mathcal{D}(\Omega_0),
  \end{align*}
  and we conclude that $u(0)=u_0$ in $L^2(\Omega_0)$ since $\mathcal{D}(\Omega_0)$ is dense in $L^2(\Omega_0)$.
\end{proof}

\subsection{Strong convergence} \label{SS:Ex_ST}
The remaining task is to identify $\mathbf{w}$ in terms of $u$.
To this end, we use a monotonicity argument as in the case of non-moving domains (see \cite{LaSoUr68,Lio69,Zei90_2B}), but we require the strong convergence of $u_N$ in the present case.
We get the strong convergence below by adopting the idea used in \cite[Chapter V, Theorem 6.7]{LaSoUr68} with modifications suited for moving domains.

\begin{proposition} \label{P:uN_AsAr}
  For each fixed $k\in\mathbb{N}$, the sequence $\{(u_N(t),w_k^t)_{L^2(\Omega_t)}\}_{N=k}^\infty$ is uniformly bounded and equicontinuous on $[0,T]$.
\end{proposition}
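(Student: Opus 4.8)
The plan is to settle uniform boundedness directly from the energy estimate, then to turn the approximate equation \eqref{E:Approx} into a differential identity for the scalar function $\beta(t):=(u_N(t),w_k^t)_{L^2(\Omega_t)}$, and finally to integrate that identity and bound its right-hand side by an $N$-independent multiple of $|t-s|^{1/p}$, which gives uniform H\"older continuity and hence equicontinuity.

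Uniform boundedness is immediate. For $N\ge k$, the Cauchy--Schwarz inequality, the energy estimate \eqref{E:Energy}, and the bound $\|w_k^t\|_{L^2(\Omega_t)}\le c$ (which follows from \eqref{E:EBF_Reg}, or already from \eqref{E:Det_Bd} since $w_k^t=\phi_tw_k^0$) give $|\beta(t)|\le\|u_N(t)\|_{L^2(\Omega_t)}\|w_k^t\|_{L^2(\Omega_t)}\le c_T$ for all $t\in[0,T]$, with $c_T$ depending on $k$ but not on $N$ or $t$.

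For equicontinuity, fix $N\ge k$. Since $u_N=\sum_{j=1}^N\alpha_N^jw_j^t$ with $\alpha_N^j\in C^1([0,T])$ and $w_j^{(\cdot)}\in C^1(\overline{Q_T})$ by \eqref{E:EBF_Reg}, the product $u_Nw_k^t$ belongs to $C^1(\overline{Q_T})$, so the Reynolds transport theorem applies (one could also invoke \eqref{E:Trans}); combined with $\partial^\bullet w_k^t=0$ from \eqref{E:MT_EBF} it yields $\beta'(t)=(\partial^\bullet u_N(t),w_k^t)_{L^2(\Omega_t)}+(u_N(t),[w_k^t\,\mathrm{div}\,\mathbf{v}_\Omega](t))_{L^2(\Omega_t)}$. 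Substituting $(\partial^\bullet u_N(t),w_k^t)_{L^2(\Omega_t)}$ from \eqref{E:Approx}, the two terms containing $\mathrm{div}\,\mathbf{v}_\Omega$ cancel and we obtain
\begin{multline*}
  \beta'(t)=\langle f_N(t),w_k^t\rangle_{W^{1,p}(\Omega_t)}-\bigl([|\nabla u_N|^{p-2}\nabla u_N](t),\nabla w_k^t\bigr)_{L^2(\Omega_t)} \\
  -\bigl(u_N(t),[\mathbf{v}_\Omega\cdot\nabla w_k^t](t)\bigr)_{L^2(\Omega_t)}
\end{multline*}
for all $t\in[0,T]$.

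Integrating this over $(s,t)$ with $0\le s\le t\le T$ and estimating the three integrals is then routine. Using $\|w_k^\tau\|_{W^{1,p}(\Omega_\tau)}\le c$ together with H\"older's inequality in $\tau$ (exponents $p',p$) and \eqref{E:fN_StCo}, the first integral is $\le c\|f_N\|_{L_{[W^{1,p}]^\ast}^{p'}}|t-s|^{1/p}\le c_T|t-s|^{1/p}$. For the second, $\bigl\|\,[|\nabla u_N|^{p-2}\nabla u_N](\tau)\,\bigr\|_{L^{p'}(\Omega_\tau)}=\|\nabla u_N(\tau)\|_{L^p(\Omega_\tau)}^{p-1}$ and $\|\nabla w_k^\tau\|_{L^p(\Omega_\tau)}\le c$, so H\"older in $\tau$ (exponents $p/(p-1)$ and $p$) and the energy bound \eqref{E:Energy} (equivalently \eqref{E:GruN_pd}) give $\le c\,c_T^{(p-1)/p}|t-s|^{1/p}\le c_T|t-s|^{1/p}$. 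For the third, \eqref{E:Vel_Bd}, \eqref{E:EBF_Reg}, \eqref{E:Volume} and \eqref{E:Energy} yield the pointwise bound $|(u_N(\tau),[\mathbf{v}_\Omega\cdot\nabla w_k^\tau](\tau))_{L^2(\Omega_\tau)}|\le c\|u_N(\tau)\|_{L^2(\Omega_\tau)}\le c_T$, so this integral is $\le c_T|t-s|\le c_T|t-s|^{1/p}$ by $|t-s|\le T^{1-1/p}|t-s|^{1/p}$. Summing, $|\beta(t)-\beta(s)|\le c_T|t-s|^{1/p}$ with $c_T$ independent of $N$, $s$, $t$, which is exactly uniform equicontinuity on $[0,T]$. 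There is no serious obstacle here: the only point to watch is that every constant stays independent of $N$, which is guaranteed by the $N$-uniform bounds \eqref{E:Energy}, \eqref{E:GruN_pd}, \eqref{E:fN_StCo} and by the $t$-uniform regularity \eqref{E:EBF_Reg} of the advected basis functions; the cancellation of the $\mathrm{div}\,\mathbf{v}_\Omega$ terms is a convenient simplification but not strictly needed, since those terms are controlled by \eqref{E:Energy} as well.
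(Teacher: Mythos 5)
Your proof is correct and follows essentially the same route as the paper: uniform boundedness via Cauchy--Schwarz and the energy estimate, then differentiate $(u_N(t),w_k^t)_{L^2(\Omega_t)}$ using the transport formula together with $\partial^\bullet w_k^t=0$, substitute the Galerkin equation \eqref{E:Approx} (the $\mathrm{div}\,\mathbf{v}_\Omega$ terms cancel as you observe, leaving exactly the three integrals $K_1,K_2,K_3$ of the paper), and bound each by an $N$-independent modulus of continuity. The only cosmetic difference is that you upgrade the $|t-s|$ bound on the third term to $|t-s|^{1/p}$ to state a single H\"older modulus, whereas the paper leaves the mixed modulus.
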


\begin{proof}
  First, we see by $w_k^t=w_k^0\circ\Phi_t^{-1}$ on $\Omega_t$, \eqref{E:Det_Bd}, and \eqref{E:Grad_Bd} that
  \begin{align} \label{Pf_AsAr:wkt}
    \|w_k^t\|_{L^r(\Omega_t)} \leq c\|w_k^0\|_{L^r(\Omega_0)}, \quad \|\nabla w_k^t\|_{L^r(\Omega_t)} \leq c\|\nabla w_k^0\|_{L^r(\Omega_0)}
  \end{align}
  for all $t\in[0,T]$ and $r=2,p$.
  Thus, for $0\leq s\leq t\leq T$, we have
  \begin{align} \label{Pf_AsAr:Int}
    \begin{aligned}
      \int_s^t\|w_k^\tau\|_{W^{1,p}(\Omega_\tau)}^p\,d\tau &\leq c\int_s^t\|w_k^0\|_{W^{1,p}(\Omega_0)}^p\,d\tau = c(t-s)\|w_k^0\|_{W^{1,p}(\Omega_0)}^p, \\
      \int_s^t\|\nabla w_k^\tau\|_{L^2(\Omega_\tau)}\,d\tau &\leq c\int_s^t\|\nabla w_k^0\|_{L^2(\Omega_0)}\,d\tau = c(t-s)\|\nabla w_k^0\|_{L^2(\Omega_0)},
    \end{aligned}
  \end{align}
  since $\|w_k^0\|_{W^{1,p}(\Omega_0)}$ and $\|\nabla w_k^0\|_{L^2(\Omega_0)}$ are independent of $\tau$.

  We observe by H\"{o}lder's inequality, \eqref{E:Energy}, and \eqref{Pf_AsAr:wkt} that
  \begin{align*}
    \max_{t\in[0,T]}|(u_N(t),w_k^t)_{L^2(\Omega_t)}| \leq \max_{t\in[0,T]}\|u_N(t)\|_{L^2(\Omega_t)}\|w_k^t\|_{L^2(\Omega_t)} \leq c_T\|w_k^0\|_{L^2(\Omega_0)}.
  \end{align*}
  Hence, $\{(u_N(t),w_k^t)_{L^2(\Omega_t)}\}_{N=k}^\infty$ is uniformly bounded on $[0,T]$.

  Next, we see by \eqref{E:Trans} and $\partial^\bullet w_k^t=0$ on $\Omega_t$ (see \eqref{E:MT_EBF}) that
  \begin{align*}
    &(u_N(t),w_k^t)_{L^2(\Omega_t)}-(u_N(s),w_k^s)_{L^2(\Omega_s)} = \int_s^t\frac{d}{d\tau}(u_N(\tau),w_k^\tau)_{L^2(\Omega_\tau)}\,d\tau \\
    &\qquad = \int_s^t\Bigl\{\bigl(\partial^\bullet u_N(\tau),w_k^\tau\bigr)_{L^2(\Omega_\tau)}+\bigl(u_N(\tau),[w_k^\tau\,\mathrm{div}\,\mathbf{v}_\Omega](\tau)\bigr)_{L^2(\Omega_\tau)}\Bigr\}\,d\tau
  \end{align*}
  for $0\leq s\leq t\leq T$.
  We further use \eqref{E:Approx} to the second line to get
  \begin{align} \label{Pf_AsAr:Diff}
    (u_N(t),w_k^t)_{L^2(\Omega_t)}-(u_N(s),w_k^s)_{L^2(\Omega_s)} = \sum_{\ell=1}^3K_\ell(s,t),
  \end{align}
  where
  \begin{align*}
    K_1(s,t) &:= \int_s^t\langle f_N(\tau),w_k^\tau\rangle_{W^{1,p}(\Omega_\tau)}\,d\tau, \\
    K_2(s,t) &:= -\int_s^t\bigl([|\nabla u_N|^{p-2}\nabla u_N](\tau),\nabla w_k^\tau\bigr)_{L^2(\Omega_\tau)}\,d\tau, \\
    K_3(s,t) &:= -\int_s^t\bigl(u_N(\tau),[\mathbf{v}_\Omega\cdot\nabla w_k^\tau](\tau)\bigr)_{L^2(\Omega_\tau)}\,d\tau.
  \end{align*}
  For $K_1$ and $K_2$, we see by H\"{o}lder's inequality that
  \begin{align*}
    |K_1(s,t)| &\leq \|f_N\|_{L_{[W^{1,p}]^\ast}^{p'}}\left(\int_s^t\|w_k^\tau\|_{W^{1,p}(\Omega_\tau)}^p\,d\tau\right)^{1/p}, \\
    |K_2(s,t)| &\leq \bigl\|\,|\nabla u_N|^{p-2}\nabla u_N\|_{L_{L^{p'}}^{p'}}\left(\int_s^t\|\nabla w_k^\tau\|_{L^p(\Omega_\tau)}^p\,d\tau\right)^{1/p}.
  \end{align*}
  Thus, it follows from \eqref{E:fN_StCo}, \eqref{E:GruN_pd}, and \eqref{Pf_AsAr:Int} that
  \begin{align*}
    |K_\ell(s,t)| \leq c_T(t-s)^{1/p}\|w_k^0\|_{W^{1,p}(\Omega_0)}, \quad \ell = 1,2.
  \end{align*}
  Also, to $K_3$, we apply \eqref{E:Vel_Bd}, H\"{o}lder's inequality, \eqref{E:Energy}, and \eqref{Pf_AsAr:Int}.
  Then,
  \begin{align*}
    |K_3(s,t)| \leq c\int_s^t\|u_N(\tau)\|_{L^2(\Omega_\tau)}\|\nabla w_k^\tau\|_{L^2(\Omega_\tau)}\,d\tau \leq c_T(t-s)\|\nabla w_k^0\|_{L^2(\Omega_0)}.
  \end{align*}
  Applying these estimates to \eqref{Pf_AsAr:Diff}, we find that
  \begin{multline*}
    |(u_N(t),w_k^t)_{L^2(\Omega_t)}-(u_N(s),w_k^s)_{L^2(\Omega_s)}| \\
    \leq c_T\Bigl\{(t-s)^{1/p}\|w_k^0\|_{W^{1,p}(\Omega_0)}+(t-s)\|\nabla w_k^0\|_{L^2(\Omega_0)}\Bigr\}.
  \end{multline*}
  Hence, $\{(u_N(t),w_k^t)_{L^2(\Omega_t)}\}_{N=k}^\infty$ is equicontinuous on $[0,T]$.
\end{proof}

Next, we prove a uniform-in-time Friedrichs type inequality on $\Omega_t$.

\begin{lemma} \label{L:Fried}
  There exists a constant $c>0$ depending only on the constants $c_0$, $c_1$, and $c_2$ appearing in \eqref{E:Det_Bd} and \eqref{E:Grad_Bd} such that the following statement holds: for each $\varepsilon>0$, there exists a $K_\varepsilon\in\mathbb{N}$ such that
  \begin{align} \label{E:Fried}
    \|\psi\|_{L^2(\Omega_t)}^2 \leq c\left(\sum_{k=1}^{K_\varepsilon}|(\psi,w_k^t)_{L^2(\Omega_t)}|^2+\varepsilon\|\psi\|_{H^1(\Omega_t)}^2\right)
  \end{align}
  for all $t\in[0,T]$ and $\psi\in H^1(\Omega_t)$ (note that $K_\varepsilon$ is independent of $t$ and $\psi$).
\end{lemma}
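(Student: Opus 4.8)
The plan is to reduce \eqref{E:Fried} to the corresponding inequality on the fixed domain $\Omega_0$ by the change of variables $x=\Phi_t(X)$, and to establish the latter from the compactness of the embedding $H^1(\Omega_0)\hookrightarrow L^2(\Omega_0)$ together with the completeness of $\{w_k^0\}_k$ in $L^2(\Omega_0)$.

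First I would prove the fixed-domain statement: for every $\varepsilon>0$ there is $K_\varepsilon\in\mathbb{N}$ with
\[
\|U\|_{L^2(\Omega_0)}^2 \le \sum_{k=1}^{K_\varepsilon}|(U,w_k^0)_{L^2(\Omega_0)}|^2 + \varepsilon\|U\|_{H^1(\Omega_0)}^2 \quad\text{for all } U\in H^1(\Omega_0).
\]
Since $\{w_k^0\}_k$ is orthonormal and complete in $L^2(\Omega_0)$, Parseval's identity gives $\|U\|_{L^2(\Omega_0)}^2-\sum_{k=1}^{K}|(U,w_k^0)|^2=\sum_{k>K}|(U,w_k^0)|^2$, so it suffices to show that $\sup\{\sum_{k>K}|(U,w_k^0)|^2 : \|U\|_{H^1(\Omega_0)}\le1\}\to0$ as $K\to\infty$. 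If this failed, there would be $\varepsilon_0>0$, indices $K_j\to\infty$, and functions $U_j$ with $\|U_j\|_{H^1(\Omega_0)}=1$ and $\sum_{k>K_j}|(U_j,w_k^0)|^2>\varepsilon_0$; by compactness $U_j\to U$ strongly in $L^2(\Omega_0)$ along a subsequence, and for each fixed $m$ and $K_j\ge m$ one has $\sum_{k>K_j}|(U_j,w_k^0)|^2\le\sum_{k>m}|(U_j,w_k^0)|^2=\|U_j\|_{L^2(\Omega_0)}^2-\sum_{k\le m}|(U_j,w_k^0)|^2\to\sum_{k>m}|(U,w_k^0)|^2$, whence $\varepsilon_0\le\sum_{k>m}|(U,w_k^0)|^2$ for every $m$, a contradiction as $m\to\infty$. (Note this uses only that $\{w_k^0\}_k$ is a basis of $L^2(\Omega_0)$, not density of its span in $H^1$.)

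Next, given $\psi\in H^1(\Omega_t)$, set $\Psi:=\phi_{-t}\psi=\psi\circ\Phi_t\in H^1(\Omega_0)$ and apply the fixed-domain inequality to $U:=\Psi J_t$, which lies in $H^1(\Omega_0)$ since $J_t\in C^1(\overline{\Omega_0})$. The change of variables $x=\Phi_t(X)$ gives the exact identity $(U,w_k^0)_{L^2(\Omega_0)}=(\Psi J_t,w_k^0)_{L^2(\Omega_0)}=(\psi,w_k^t)_{L^2(\Omega_t)}$ (using $w_k^t\circ\Phi_t=w_k^0$), which is precisely why one multiplies by $J_t$. On the left, \eqref{E:Det_Bd} gives $\|U\|_{L^2(\Omega_0)}^2\ge c_0^2\|\Psi\|_{L^2(\Omega_0)}^2$ and $\|\psi\|_{L^2(\Omega_t)}^2=\int_{\Omega_0}|\Psi|^2J_t\,dX\le c_1\|\Psi\|_{L^2(\Omega_0)}^2$; on the right, $\nabla(\Psi J_t)=J_t\nabla\Psi+\Psi\nabla J_t$ with \eqref{E:Det_Bd} yields $\|U\|_{H^1(\Omega_0)}^2\le C_1\|\Psi\|_{H^1(\Omega_0)}^2$, while \eqref{E:Det_Bd} and \eqref{E:Grad_Bd} (applied to $\psi,\Psi$, after $\int_{\Omega_t}|(\nabla\Psi)\circ\Phi_t^{-1}|^2\,dx=\int_{\Omega_0}|\nabla\Psi|^2J_t\,dX$) yield $\|\Psi\|_{H^1(\Omega_0)}^2\le C_2\|\psi\|_{H^1(\Omega_t)}^2$, with $C_1,C_2>0$ depending only on $c_0,c_1,c_2$. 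Combining, for any $\varepsilon'>0$ and its $K_{\varepsilon'}$,
\[
\|\psi\|_{L^2(\Omega_t)}^2 \le \frac{c_1}{c_0^2}\sum_{k=1}^{K_{\varepsilon'}}|(\psi,w_k^t)_{L^2(\Omega_t)}|^2 + \varepsilon'\,\frac{c_1C_1C_2}{c_0^2}\,\|\psi\|_{H^1(\Omega_t)}^2 .
\]
Finally I would take $\varepsilon':=\varepsilon c_0^2/(c_1C_1C_2)$, which depends only on $\varepsilon$ and the fixed constants $c_0,c_1,c_2$ (so $K_\varepsilon:=K_{\varepsilon'}$ is independent of $t$ and $\psi$), and $c:=\max\{c_1/c_0^2,1\}$, obtaining \eqref{E:Fried}.

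The main obstacle — and the only step that is more than a routine change of variables — is that pulling back the $L^2(\Omega_t)$ inner product introduces the Jacobian weight $J_t$, so one cannot simply test the fixed-domain inequality with $\Psi$. The right move is to test with $\Psi J_t$: this makes the coefficients $(\Psi J_t,w_k^0)_{L^2(\Omega_0)}$ coincide exactly with $(\psi,w_k^t)_{L^2(\Omega_t)}$, and all the comparison constants between the $\Omega_0$- and $\Omega_t$-norms stay uniform in $t$ by \eqref{E:Det_Bd} and \eqref{E:Grad_Bd}, so that $K_\varepsilon$ can indeed be chosen independently of $t$.
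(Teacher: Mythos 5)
Your proof is correct and takes essentially the same route as the paper: both reduce to the Friedrichs inequality on $\Omega_0$ and then test it with $Z=\Psi J_t=(\psi\circ\Phi_t)J_t$ so that the inner products $(Z,w_k^0)_{L^2(\Omega_0)}$ coincide exactly with $(\psi,w_k^t)_{L^2(\Omega_t)}$, with all comparison constants controlled uniformly by \eqref{E:Det_Bd} and \eqref{E:Grad_Bd}. The only difference is that the paper cites \cite[Chapter II, Lemma 2.4]{LaSoUr68} for the $t=0$ inequality, whereas you supply the short compactness-plus-Parseval proof yourself.
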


\begin{proof}
  First, let $t=0$.
  Then, since $\{w_k^0\}_k$ is an orthonormal basis of $L^2(\Omega_0)$, we have the following Friedrichs inequality on $\Omega_0$: for each $\varepsilon>0$, there exists a $K_\varepsilon\in\mathbb{N}$ such that
  \begin{align} \label{Pf_Fri:zero}
    \|Z\|_{L^2(\Omega_0)}^2 \leq \sum_{k=1}^{K_\varepsilon}|(Z,w_k^0)_{L^2(\Omega_0)}|^2+\varepsilon\|Z\|_{H^1(\Omega_0)}^2
  \end{align}
  for all $Z\in H^1(\Omega_0)$.
  We refer to \cite[Chapter II, Lemma 2.4]{LaSoUr68} for the proof.

  Now, let $t\in[0,T]$ and $\psi\in H^1(\Omega_t)$.
  We define
  \begin{align*}
    Z(X) := \psi(\Phi_t(X))J_t(X) = \psi(\Phi_t(X))\det\nabla\Phi_t(X), \quad X\in\Omega_0.
  \end{align*}
  Then, we observe by \eqref{E:Det_Bd} and \eqref{E:Grad_Bd} that
  \begin{align*}
    \|Z\|_{L^2(\Omega_0)} \geq c\|\psi\|_{L^2(\Omega_t)}, \quad \|Z\|_{H^1(\Omega_0)} \leq c\|\psi\|_{H^1(\Omega_t)},
  \end{align*}
  where $c>0$ is a constant depending only on $c_0$, $c_1$, and $c_2$.
  Moreover,
  \begin{align*}
    (Z,w_k^0)_{L^2(\Omega_0)} = \int_{\Omega_0}\psi(\Phi_t(X))w_k^0(X)J_t(X)\,dX = \int_{\Omega_t}\psi(x)w_k^t(x)\,dx = (\psi,w_k^t)_{L^2(\Omega_t)}
  \end{align*}
  by $w_k^t=w_k^0\circ\Phi_t^{-1}$ on $\Omega_t$.
  Applying these relations to \eqref{Pf_Fri:zero}, we get \eqref{E:Fried}.
\end{proof}

The above results give the strong convergence of $u_N$ in $L_{L^2}^2$ as follows.

\begin{proposition} \label{P:uN_Str}
  Up to a subsequence, $u_N\to u$ strongly in $L_{L^2}^2$ as $N\to\infty$.
\end{proposition}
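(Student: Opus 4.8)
The plan is to combine the uniform-in-time Friedrichs type inequality of Lemma~\ref{L:Fried} with the uniform boundedness and equicontinuity from Proposition~\ref{P:uN_AsAr}. Since $p>2$ and $\Omega_t$ is a bounded domain, both $u_N(t)$ and $u(t)$ belong to $H^1(\Omega_t)$ for almost all $t\in(0,T)$, so we may apply \eqref{E:Fried} with $\psi=u_N(t)-u(t)$ and integrate over $(0,T)$ to obtain, for every $\varepsilon>0$,
\begin{align} \label{Pf_uNStr:main}
  \|u_N-u\|_{L_{L^2}^2}^2 \leq c\sum_{k=1}^{K_\varepsilon}\int_0^T\bigl|(u_N(t)-u(t),w_k^t)_{L^2(\Omega_t)}\bigr|^2\,dt+c\,\varepsilon\,\|u_N-u\|_{L_{H^1}^2}^2,
\end{align}
where $c$ and $K_\varepsilon$ are independent of $N$. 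It then suffices to prove that $\|u_N-u\|_{L_{H^1}^2}$ is bounded uniformly in $N$ and that, along a subsequence, each of the finitely many time integrals in \eqref{Pf_uNStr:main} tends to $0$ as $N\to\infty$.

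For the first point, the energy estimate \eqref{E:Energy} gives $\|u_N\|_{L_{L^2}^2}\leq c_T$, and H\"older's inequality on each $\Omega_t$ (using $p>2$), \eqref{E:Volume}, and $T<\infty$ turn the $L_{L^p}^p$-bound on $\nabla u_N$ from \eqref{E:Energy} into $\|\nabla u_N\|_{L_{L^2}^2}\leq c_T$; hence $\|u_N\|_{L_{H^1}^2}\leq c_T$. Since also $u\in L_{H^1}^2$ (recall $u\in L_{L^2}^\infty\cap L_{W^{1,p}}^p$), we conclude $\|u_N-u\|_{L_{H^1}^2}\leq c_T$ uniformly in $N$, so the last term in \eqref{Pf_uNStr:main} is $\leq c_T\varepsilon$.

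For the second point, fix $k\in\mathbb{N}$. By Proposition~\ref{P:uN_AsAr} and the Arzel\`a--Ascoli theorem, the sequence $t\mapsto(u_N(t),w_k^t)_{L^2(\Omega_t)}$, $N\geq k$, has a subsequence converging uniformly on $[0,T]$ to some $g_k\in C([0,T])$. To identify $g_k$, take any $\theta\in C([0,T])$; since $w_k^{(\cdot)}\in C^1(\overline{Q_T})$ by \eqref{E:EBF_Reg} and $Q_T$ has finite measure, $\theta w_k^{(\cdot)}\in L_{L^1}^1$, so the weak-$\ast$ convergence $u_N\to u$ in $L_{L^2}^\infty$ from \eqref{E:WeConv} yields
\begin{align*}
  \int_0^T\theta(t)(u_N(t),w_k^t)_{L^2(\Omega_t)}\,dt = \int_0^T(u_N(t),\theta(t)w_k^t)_{L^2(\Omega_t)}\,dt \longrightarrow \int_0^T\theta(t)(u(t),w_k^t)_{L^2(\Omega_t)}\,dt
\end{align*}
as $N\to\infty$, while the left-hand side also tends to $\int_0^T\theta(t)g_k(t)\,dt$ by uniform convergence. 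As $\theta$ is arbitrary and $t\mapsto(u(t),w_k^t)_{L^2(\Omega_t)}$ is continuous (because $u\in C_{L^2}$ by Lemma~\ref{L:Trans}), we get $g_k(t)=(u(t),w_k^t)_{L^2(\Omega_t)}$ on $[0,T]$. A diagonal extraction over $k$ then produces a single subsequence of $\{u_N\}$ along which $(u_N(\cdot),w_k^\cdot)_{L^2(\Omega_\cdot)}\to(u(\cdot),w_k^\cdot)_{L^2(\Omega_\cdot)}$ uniformly on $[0,T]$ for every $k$; in particular each time integral in \eqref{Pf_uNStr:main} tends to $0$ along this subsequence.

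Finally, given $\eta>0$, we first fix $\varepsilon>0$ with $c\,\varepsilon\sup_N\|u_N-u\|_{L_{H^1}^2}^2<\eta/2$, which fixes $K_\varepsilon$, and then choose $N$ large so that the finite sum over $k=1,\dots,K_\varepsilon$ in \eqref{Pf_uNStr:main} is $<\eta/2$; hence $\|u_N-u\|_{L_{L^2}^2}^2<\eta$ for all large $N$ in the subsequence, proving the claim. I expect the only genuinely delicate ingredients to be the uniformity in $t$ of both the Friedrichs constant and $K_\varepsilon$ (exactly what Lemma~\ref{L:Fried} supplies, and what lets the time integration in \eqref{Pf_uNStr:main} produce a \emph{fixed} finite sum) and the identification of the Arzel\`a--Ascoli limit with the weak limit; everything else is routine.
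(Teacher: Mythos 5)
Your proof is correct and uses the same core ingredients as the paper: the uniform-in-time Friedrichs inequality (Lemma~\ref{L:Fried}), the Arzel\`a--Ascoli argument from Proposition~\ref{P:uN_AsAr}, and the $L_{H^1}^2$-bound coming from the energy estimate and Lemma~\ref{L:Lp_L2}. The one genuine variation is in the final step: the paper applies \eqref{E:Fried} to the difference $u_M-u_N$ and concludes that $\{u_N\}$ is a \emph{Cauchy sequence} in $L_{L^2}^2$, which immediately gives strong convergence to the weak limit $u$ without ever needing to know what the uniform limit of $(u_N(\cdot),w_k^{\cdot})_{L^2}$ is. You instead apply \eqref{E:Fried} to $u_N-u$, which forces you to identify the Arzel\`a--Ascoli limit $g_k$ with $(u(\cdot),w_k^{\cdot})_{L^2}$; that step is correct but it costs you two extra observations (weak-$\ast$ convergence tested against $\theta w_k^{(\cdot)}$, and continuity of $t\mapsto(u(t),w_k^t)_{L^2(\Omega_t)}$ via $u\in\mathbb{W}^{p,p'}\hookrightarrow C_{L^2}$, which is available because Proposition~\ref{P:Lim_Wpp} precedes this point). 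The paper's Cauchy formulation is slightly leaner for exactly this reason. One small slip: to invoke weak-$\ast$ convergence in $L_{L^2}^\infty$ you should note $\theta w_k^{(\cdot)}\in L_{L^2}^1$ (the predual), not $L_{L^1}^1$; fortunately the stronger membership does hold since $w_k^{(\cdot)}$ is bounded on $\overline{Q_T}$ and $|\Omega_t|$ is uniformly bounded, so the conclusion is unaffected.
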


\begin{proof}
  By Proposition \ref{P:uN_AsAr}, the Ascoli--Arzel\'{a} theorem, and a diagonal argument, we can take a subsequence of $\{u_N\}_N$, which is still denoted by $\{u_N\}_N$, such that
  \begin{align} \label{Pf_uSt:Unif}
    \{(u_N(t),w_k^t)_{L^2(\Omega_t)}\}_{N=1}^\infty \quad\text{converges uniformly on $[0,T]$ for all $k\in\mathbb{N}$}.
  \end{align}
  Also, we see by \eqref{E:Lp_L2} with $\delta=1$ and by \eqref{E:Energy} and $T<\infty$ that
  \begin{align} \label{Pf_uSt:H1}
    \int_0^T\|u_N(t)\|_{H^1(\Omega_t)}^2\,dt \leq c\int_0^T\Bigl(\|u_N(t)\|_{L^2(\Omega_t)}^2+\|\nabla u_N(t)\|_{L^p(\Omega_t)}^p+1\Bigr)\,dt \leq c_T.
  \end{align}
  For each $\varepsilon>0$, let $K_\varepsilon\in\mathbb{N}$ be given in Lemma \ref{L:Fried}.
  Noting that
  \begin{align*}
    \|u_M(t)-u_N(t)\|_{H^1(\Omega_t)}^2 \leq 2\Bigl(\|u_M(t)\|_{H^1(\Omega_t)}^2+\|u_N(t)\|_{H^1(\Omega_t)}^2\Bigr),
  \end{align*}
  we set $\psi=u_M-u_N$ in \eqref{E:Fried}, integrate over $(0,T)$, and use \eqref{Pf_uSt:H1} to find that
  \begin{align*}
    \|u_M-u_N\|_{L_{L^2}^2}^2 \leq c\left(\sum_{k=1}^{K_\varepsilon}\int_0^T\bigl|\bigl(u_M(t)-u_N(t),w_k^t\bigr)_{L^2(\Omega_t)}\bigr|^2\,dt+\varepsilon c_T\right).
  \end{align*}
  Let $M,N\to\infty$.
  Then, since \eqref{Pf_uSt:Unif} holds and $K_\varepsilon$ is independent of $M$ and $N$,
  \begin{align*}
    \limsup_{M,N\to\infty}\|u_M-u_N\|_{L_{L^2}^2}^2 \leq c_T\varepsilon \quad\text{for all}\quad \varepsilon>0.
  \end{align*}
  Thus, $\{u_N\}_N$ is a Cauchy sequence in $L_{L^2}^2$.
  By this fact and \eqref{E:WeConv}, we find that $\{u_N\}_N$ converges to $u$ strongly in $L_{L^2}^2$.
\end{proof}

\subsection{Characterization of the nonlinear term} \label{SS:Ex_Char}
Now, let us identify $\mathbf{w}$.
As mentioned in Section \ref{S:Intro}, the strong convergence of $u_N$ enables us not only to deal with
\begin{align*}
  \int_0^T\bigl(u_N(t),[u_N\,\mathrm{div}\,\mathbf{v}_\Omega](t)\bigr)_{L^2(\Omega_t)}\,dt, \quad \int_0^T\bigl(u_N(t),[\mathbf{v}_\Omega\cdot\nabla u_N](t)\bigr)_{L^2(\Omega_t)}\,dt,
\end{align*}
but also to avoid considering the time traces $u_N(0)$ and $u_N(T)$ that usually appear when one uses a monotonicity argument (see e.g. \cite{LaSoUr68,Lio69,Zei90_2B,AlCaDjEl23}).

\begin{proposition} \label{P:Chara}
  For all $\psi\in L_{W^{1,p}}^p$, we have
  \begin{align} \label{E:Chara}
    \int_0^T(\mathbf{w},\nabla\psi)_{L^2(\Omega_t)}\,dt = \int_0^T(|\nabla u|^{p-2}\nabla u,\nabla\psi)_{L^2(\Omega_t)}\,dt.
  \end{align}
\end{proposition}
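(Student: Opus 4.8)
The plan is to use the classical Minty--Browder monotonicity trick, but with the strong convergence of $u_N$ in $L_{L^2}^2$ (Proposition \ref{P:uN_Str}) playing the role that an energy identity for the time traces usually plays. First I would set
\begin{align*}
  X_N := \int_0^T\bigl([|\nabla u_N|^{p-2}\nabla u_N](t)-[|\nabla\varphi|^{p-2}\nabla\varphi](t),\nabla u_N(t)-\nabla\varphi(t)\bigr)_{L^2(\Omega_t)}\,dt
\end{align*}
for an arbitrary $\varphi\in L_{W^{1,p}}^p$, which is nonnegative by \eqref{E:Vec_Mono}. Expanding $X_N$ and using the approximate equation \eqref{E:Approx} tested against $u_N$ itself (i.e. multiplying by $\alpha_N^k$ and summing), together with \eqref{E:Trans} applied to $\|u_N(t)\|_{L^2(\Omega_t)}^2$, I would rewrite $\int_0^T(|\nabla u_N|^{p-2}\nabla u_N,\nabla u_N)\,dt$ as a sum of: the initial/final $L^2$-term $\tfrac12\|u_{0,N}\|_{L^2(\Omega_0)}^2-\tfrac12\|u_N(T)\|_{L^2(\Omega_T)}^2$, the data term $\int_0^T\langle f_N,u_N\rangle\,dt$, and the two boundary-type terms from \eqref{E:Intro_NL}. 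This is exactly the step where the strong convergence is essential: $\|u_N(T)\|_{L^2(\Omega_T)}^2$ has no obvious weak limit, so instead I avoid it entirely by testing the limit equation \eqref{E:LWF_02} with $\varphi$ replaced by... — actually, the cleaner route is to compare with the limit. Let me restructure.

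The key steps, in order, are: (1) From \eqref{E:Approx} tested against $u_N$ and integrated over $(0,T)$, together with \eqref{E:Trans} for $\|u_N\|_{L^2(\Omega_t)}^2$, obtain
\begin{align*}
  \int_0^T\|\nabla u_N\|_{L^p(\Omega_t)}^p\,dt = \tfrac12\|u_{0,N}\|_{L^2(\Omega_0)}^2-\tfrac12\|u_N(T)\|_{L^2(\Omega_T)}^2+\int_0^T\langle f_N,u_N\rangle\,dt-R_N,
\end{align*}
where $R_N=\int_0^T(u_N,\mathbf v_\Omega\cdot\nabla u_N+\tfrac12 u_N\,\mathrm{div}\,\mathbf v_\Omega)\,dt$. (2) By the strong $L_{L^2}^2$-convergence $u_N\to u$ (Proposition \ref{P:uN_Str}) combined with the weak $L_{W^{1,p}}^p$-convergence $u_N\rightharpoonup u$, each factor $u_N$ paired against a strongly-convergent-or-weakly-convergent partner passes to the limit: $R_N\to R:=\int_0^T(u,\mathbf v_\Omega\cdot\nabla u+\tfrac12 u\,\mathrm{div}\,\mathbf v_\Omega)\,dt$ (here $u_N\to u$ strongly in $L_{L^2}^2$ handles the product with $\nabla u_N\rightharpoonup\nabla u$ weakly in $L_{L^2}^2$, which is legitimate since one factor converges strongly), and $\int_0^T\langle f_N,u_N\rangle\to\int_0^T\langle f,u\rangle$ by \eqref{E:fN_StCo} and weak convergence, and $\|u_{0,N}\|_{L^2}^2\to\|u_0\|_{L^2}^2$ by \eqref{E:u0N_L2}. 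For $\|u_N(T)\|_{L^2(\Omega_T)}^2$ I use weak lower semicontinuity together with the fact that $u_N(T)\rightharpoonup u(T)$ in $L^2(\Omega_T)$; since it enters with a minus sign, $\limsup_N\bigl(-\tfrac12\|u_N(T)\|^2\bigr)\le-\tfrac12\|u(T)\|^2$. (3) On the other hand, testing the limit equation \eqref{E:LWF_02} with $\psi=u$ and applying \eqref{E:Trans} to $\|u(t)\|_{L^2(\Omega_t)}^2$ with $u(0)=u_0$ (Proposition \ref{P:Lim_Wpp}) gives the identity
\begin{align*}
  \int_0^T(\mathbf w,\nabla u)_{L^2(\Omega_t)}\,dt = \tfrac12\|u_0\|_{L^2(\Omega_0)}^2-\tfrac12\|u(T)\|_{L^2(\Omega_T)}^2+\int_0^T\langle f,u\rangle\,dt-R.
\end{align*}
Combining (1)--(3) yields $\limsup_{N\to\infty}\int_0^T\|\nabla u_N\|_{L^p(\Omega_t)}^p\,dt\le\int_0^T(\mathbf w,\nabla u)_{L^2(\Omega_t)}\,dt$.

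With that estimate in hand, the Minty trick finishes it: from $X_N\ge0$, expand and pass to the limit using the $\limsup$ bound on $\int\|\nabla u_N\|^p$ just obtained, the weak convergence $|\nabla u_N|^{p-2}\nabla u_N\rightharpoonup\mathbf w$ in $[L_{L^{p'}}^{p'}]^n$, and the weak convergence $\nabla u_N\rightharpoonup\nabla u$, to get
\begin{align*}
  0\le\int_0^T\bigl(\mathbf w-|\nabla\varphi|^{p-2}\nabla\varphi,\nabla u-\nabla\varphi\bigr)_{L^2(\Omega_t)}\,dt\quad\text{for all}\quad\varphi\in L_{W^{1,p}}^p.
\end{align*}
Then take $\varphi=u-\lambda\eta$ for $\lambda>0$ and arbitrary $\eta\in L_{W^{1,p}}^p$, divide by $\lambda$, let $\lambda\downarrow0$, and use the continuity of $s\mapsto|s|^{p-2}s$ together with dominated convergence to conclude $\int_0^T(\mathbf w-|\nabla u|^{p-2}\nabla u,\nabla\eta)_{L^2(\Omega_t)}\,dt\ge0$; replacing $\eta$ by $-\eta$ gives equality, which is \eqref{E:Chara}. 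The main obstacle I anticipate is step (2): justifying that the quadratic term $R_N$ and the boundary-type terms converge requires genuinely the strong $L_{L^2}^2$-convergence of $u_N$ (weak convergence alone fails for products like $(u_N,\mathbf v_\Omega\cdot\nabla u_N)$), and one must be careful that $\nabla u_N$ converges only weakly in $L_{L^2}^2$ so the strong factor must be the undifferentiated $u_N$; the energy-identity bookkeeping at $t=T$ also needs the continuity embedding $\mathbb W^{p,p'}\hookrightarrow C_{L^2}$ from Lemma \ref{L:Trans} to make sense of $u_N(T)$ and $u(T)$ and their weak convergence.
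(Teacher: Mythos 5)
Your plan follows the classical Lions-type monotonicity argument that tracks the energy up to $t=T$, which is a genuinely different route from the paper's proof: the paper multiplies the approximate equation by $\theta(t)\alpha_N^k(t)$ with a compactly supported $\theta\in\mathcal{D}(0,T)$, applies Lemma \ref{L:Wpp_Mult} to $\theta u_N$, and because $\theta(0)=\theta(T)=0$ the terms at $t=0$ and $t=T$ never appear; the final equality in $\theta$ is then upgraded to all of $(0,T)$ by choosing $\theta\equiv1$ on $[\delta,T-\delta]$. The paper's Introduction explicitly advertises this as the reason for establishing the strong $L_{L^2}^2$-convergence: it lets one \emph{avoid} dealing with the time traces $u_N(0),u_N(T)$. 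Your steps (1), (3), and the closing Minty argument with $\varphi=u-\lambda\eta$ are all sound (and your dominated-convergence justification of the $\lambda\downarrow0$ limit is an acceptable alternative to the paper's use of the Lipschitz-type bound \eqref{E:p_Lip}).

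The gap is in step (2), in the treatment of $\|u_N(T)\|_{L^2(\Omega_T)}^2$. You invoke weak lower semicontinuity, which requires $u_N(T)\rightharpoonup u(T)$ weakly in $L^2(\Omega_T)$, and you attribute this to the embedding $\mathbb{W}^{p,p'}\hookrightarrow C_{L^2}$. That embedding only guarantees that $u(T)$ is a well-defined element of $L^2(\Omega_T)$ (and $u_N(T)$ is defined since the Galerkin ODE gives $C^1$ solutions); it says nothing about the convergence of the traces. Neither the weak convergence of $u_N$ in $L_{W^{1,p}}^p$ nor the strong convergence in $L_{L^2}^2$ implies pointwise-in-time weak convergence, and --- unlike the non-moving case --- there is no free uniform bound on a Galerkin projection of $\partial^\bullet u_N$ because $\{w_k^t\}_k$ is not orthonormal in $L^2(\Omega_t)$ for $t>0$. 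The claim $u_N(T)\rightharpoonup u(T)$ is true and can be proved (take $\theta\in C^1([0,T])$ with $\theta(0)=0$, $\theta(T)=1$, test \eqref{E:Approx} by $\theta w_k^t$, integrate, use \eqref{E:Trans} and $\partial^\bullet w_k^t=0$ to pull out $(u_N(T),w_k^T)_{L^2(\Omega_T)}$, pass to the limit using the strong/weak convergences already in hand, compare against the same computation performed on \eqref{E:LWF_02}, and conclude via the uniform bound on $\|u_N(T)\|_{L^2(\Omega_T)}$ and the density of $\mathcal{L}(\{w_k^T\}_k)$ in $L^2(\Omega_T)$), but this is a separate and non-trivial argument that must be written out --- it is precisely the bookkeeping the paper chose its $\theta$-cutoff route to sidestep. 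As written, the step where $-\tfrac12\|u_N(T)\|^2$ is bounded above by $-\tfrac12\|u(T)\|^2$ in the limit is unjustified.
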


\begin{proof}
  Let $\zeta\in L_{W^{1,p}}^p$, and let $\theta\in\mathcal{D}(0,T)$ satisfy $\theta\geq0$ on $(0,T)$.
  For $N\in\mathbb{N}$ and $t\in[0,T]$, we set $\mathbf{a}=\nabla u_N(t)$ and $\mathbf{b}=\nabla \zeta(t)$ in \eqref{E:Vec_Mono} and multiply both sides by $\theta(t)\geq0$.
  Then,
  \begin{align*}
    (|\nabla u_N|^{p-2}\nabla u_N-|\nabla\zeta|^{p-2}\nabla\zeta,\theta\nabla u_N-\theta\nabla\zeta)_{L^2(\Omega_t)} \geq 0.
  \end{align*}
  Next, we multiply \eqref{E:Approx} by $\theta(t)\alpha_N^k(t)$ and sum over $k=1,\dots,N$ to get
  \begin{multline*}
    (\partial^\bullet u_N,\theta u_N)_{L^2(\Omega_t)}+(|\nabla u_N|^{p-2}\nabla u_N,\theta\nabla u_N)_{L^2(\Omega_t)} \\
    +(u_N,\theta[\mathbf{v}_\Omega\cdot\nabla u_N+u_N\,\mathrm{div}\,\mathbf{v}_\Omega])_{L^2(\Omega_t)} = \langle f_N,\theta u_N\rangle_{W^{1,p}(\Omega_t)}.
  \end{multline*}
  From the above two relations, we deduce that
  \begin{multline*}
    -(\partial^\bullet u_N,\theta u_N)_{L^2(\Omega_t)}-(u_N,\theta[\mathbf{v}_\Omega\cdot\nabla u_N+u_N\,\mathrm{div}\,\mathbf{v}_\Omega])_{L^2(\Omega_t)}+\langle f_N,\theta u_N\rangle_{W^{1,p}(\Omega_t)}\\
    -(|\nabla u_N|^{p-2}\nabla u_N,\theta\nabla\zeta)_{L^2(\Omega_t)}-(|\nabla\zeta|^{p-2}\nabla\zeta,\theta\nabla u_N-\theta\nabla\zeta)_{L^2(\Omega_t)} \geq 0.
  \end{multline*}
  We integrate both sides over $(0,T)$ and use \eqref{E:Tr_Mult}.
  Then, noting that $\theta$ depends only on time and $\theta(0)=\theta(T)=0$, and writing $\theta'=d\theta/dt$, we have
  \begin{multline} \label{Pf_Ch:uN}
    \frac{1}{2}\int_0^T(u_N,\theta'u_N)_{L^2(\Omega_t)}\,dt-\int_0^T\Bigl(u_N,\theta\Bigl[\mathbf{v}_\Omega\cdot\nabla u_N+\frac{1}{2}u_N\,\mathrm{div}\,\mathbf{v}_\Omega\Bigr]\Bigr)_{L^2(\Omega_t)}\,dt \\
    +\int_0^T\langle f_N,\theta u_N\rangle_{W^{1,p}(\Omega_t)}\,dt-\int_0^T(|\nabla u_N|^{p-2}\nabla u_N,\theta\nabla\zeta)_{L^2(\Omega_t)}\,dt \\
    -\int_0^T(|\nabla\zeta|^{p-2}\nabla\zeta,\theta\nabla u_N-\theta\nabla\zeta)_{L^2(\Omega_t)}\,dt \geq 0.
  \end{multline}
  Now, we see by Proposition \ref{P:uN_Str}, $p'<2$, H\"{o}lder's inequality, \eqref{E:Volume}, and $T<\infty$ that
  \begin{align*}
    \lim_{N\to\infty}u_N = u \quad\text{strongly in $L_{L^2}^2$ and thus in $L_{L^{p'}}^{p'}$}.
  \end{align*}
  Using this, \eqref{E:fN_StCo}, \eqref{E:WeConv}, $\theta\in\mathcal{D}(0,T)$, and \eqref{E:Vel_Bd}, we send $N\to\infty$ in \eqref{Pf_Ch:uN} to get
  \begin{multline*}
    \frac{1}{2}\int_0^T(u,\theta'u)_{L^2(\Omega_t)}\,dt-\int_0^T\Bigl(u,\theta\Bigl[\mathbf{v}_\Omega\cdot\nabla u+\frac{1}{2}u\,\mathrm{div}\,\mathbf{v}_\Omega\Bigr]\Bigr)_{L^2(\Omega_t)}\,dt \\
    +\int_0^T\langle f,\theta u\rangle_{W^{1,p}(\Omega_t)}\,dt-\int_0^T(\mathbf{w},\theta\nabla\zeta)_{L^2(\Omega_t)}\,dt \\
    -\int_0^T(|\nabla\zeta|^{p-2}\nabla\zeta,\theta\nabla u-\theta\nabla\zeta)_{L^2(\Omega_t)}\,dt \geq 0.
  \end{multline*}
  On the other hand, we set $\psi=\theta u$ in \eqref{E:LWF_02} and use \eqref{E:Tr_Mult} and $\theta(0)=\theta(T)=0$.
  Then,
  \begin{multline*}
    -\frac{1}{2}\int_0^T(u,\theta' u)_{L^2(\Omega_t)}\,dt+\int_0^T(\mathbf{w},\theta\nabla u)_{L^2(\Omega_t)}\,dt \\
    +\int_0^T\Bigl(u,\theta\Bigl[\mathbf{v}_\Omega\cdot\nabla u+\frac{1}{2}u\,\mathrm{div}\,\mathbf{v}_\Omega\Bigr]\Bigr)_{L^2(\Omega_t)}\,dt = \int_0^T\langle f,\theta u\rangle_{W^{1,p}(\Omega_t)}\,dt.
  \end{multline*}
  Taking the sum of the above two relations, we find that
  \begin{align*}
    \int_0^T(\mathbf{w}-|\nabla\zeta|^{p-2}\nabla\zeta,\theta\nabla u-\theta\nabla\zeta)_{L^2(\Omega_t)}\,dt \geq 0.
  \end{align*}
  Now, let $\zeta=\zeta_\sigma:=u-\sigma\psi$ with $\sigma\in(0,1)$ and $\psi\in\mathcal{D}_{W^{1,p}}$.
  Then,
  \begin{align} \label{Pf_Ch:sigma}
    \sigma\int_0^T(\mathbf{w}-|\nabla\zeta_\sigma|^{p-2}\nabla\zeta_\sigma,\theta\nabla\psi)_{L^2(\Omega_t)}\,dt \geq 0.
  \end{align}
  By \eqref{E:p_Lip}, $|\nabla\zeta_\sigma|\leq|\nabla u|+|\nabla\psi|$, \eqref{Pf_IV:ab}, $\zeta_\sigma-u=-\sigma\psi$, and Young's inequality,
  \begin{align*}
    \bigl|\,|\nabla\zeta_\sigma|^{p-2}\nabla\zeta_\sigma-|\nabla u|^{p-2}\nabla u\,\bigr| &\leq c\sigma(|\nabla u|^{p-2}+|\nabla\psi|^{p-2})|\nabla\psi| \\
    &\leq c\sigma(|\nabla u|^{p-1}+|\nabla \psi|^{p-1}) \quad\text{on}\quad Q_T.
  \end{align*}
  Thus, we see by H\"{o}lder's inequality, $u\in L_{W^{1,p}}^p$, and $\psi\in\mathcal{D}_{W^{1,p}}$ that
  \begin{align*}
    \bigl\|\,|\nabla\zeta_\sigma|^{p-2}\nabla\zeta_\sigma-|\nabla u|^{p-2}\nabla u\,\bigr\|_{L_{L^{p'}}^{p'}} &\leq c\sigma\Bigl(\bigl\|\,|\nabla u|^{p-1}\,\bigr\|_{L_{L^{p'}}^{p'}}+\bigl\|\,|\nabla\psi|^{p-1}\,\bigr\|_{L_{L^{p'}}^{p'}}\Bigr) \\
    &= c\sigma\Bigl(\|\nabla u\|_{L_{L^p}^p}^{p-1}+\|\nabla\psi\|_{L_{L^p}^p}^{p-1}\Bigr) \to 0
  \end{align*}
  as $\sigma\to0$.
  Based on this result, we divide \eqref{Pf_Ch:sigma} by $\sigma>0$ and send $\sigma\to0$ to get
  \begin{align*}
    \int_0^T(\mathbf{w}-|\nabla u|^{p-2}\nabla u,\theta\nabla\psi)_{L^2(\Omega_t)}\,dt \geq 0.
  \end{align*}
  Replacing $\psi$ by $-\psi$, we also have the opposite inequality.
  Therefore,
  \begin{align*}
    \int_0^T(\mathbf{w}-|\nabla u|^{p-2}\nabla u,\theta\nabla\psi)_{L^2(\Omega_t)}\,dt = 0
  \end{align*}
  for all $\psi\in\mathcal{D}_{W^{1,p}}$ and nonnegative $\theta\in\mathcal{D}(0,T)$.
  Moreover, assuming that
  \begin{align*}
    \psi(\cdot,t) = 0 \quad\text{on}\quad \Omega_t \quad\text{for all}\quad t\in[0,\delta]\cup[T-\delta,T]
  \end{align*}
  with some $\delta>0$, and taking a nonnegative $\theta$ such that $\theta=1$ on $[\delta,T-\delta]$, we have
  \begin{align*}
    \int_0^T(\mathbf{w}-|\nabla u|^{p-2}\nabla u,\nabla\psi)_{L^2(\Omega_t)}\,dt = 0 \quad\text{for all}\quad \psi\in\mathcal{D}_{W^{1,p}}.
  \end{align*}
  This equality is also valid for $\psi\in L_{W^{1,p}}^p$ by a density argument, since $\mathcal{D}_{W^{1,p}}$ is dense in $L_{W^{1,p}}^p$.
  Therefore, we obtain \eqref{E:Chara}.
\end{proof}

Propositions \ref{P:Lim_Wpp} and \ref{P:Chara} show that $u\in\mathbb{W}^{p,p'}$ satisfies \eqref{E:pLap_WF} and $u(0)=u_0$ in $L^2(\Omega_0)$.
Thus, $u$ is a weak solution to \eqref{E:pLap_MoDo}, and the uniqueness follows from Proposition \ref{P:Uni}.
This completes the proof of Theorem \ref{T:Exi_Uni}.

%%% Section 5 %%%
\section{Regularity of the time derivative} \label{S:Reg}
In this section, we show that the time derivative of the weak solution $u$ exists in the $L^2$ sense when the given data $f$ and $u_0$ have a better regularity.
Note that, by $p>2$,
\begin{align*}
  W^{1,p}(\Omega_t) \subset L^2(\Omega_t), \quad L_{L^2}^2 \subset [L_{W^{1,p}}^p]^\ast = L_{[W^{1,p}]^\ast}^{p'}.
\end{align*}

\begin{theorem} \label{T:Reg_dt}
  Under Assumption \ref{A:Domain}, suppose further that
  \begin{align*}
    \mathbf{v}_\Omega\in C^1(\overline{Q_T})^n, \quad f\in L_{L^2}^2, \quad u_0\in W^{1,p}(\Omega_0).
  \end{align*}
  Let $u$ be the unique weak solution to \eqref{E:pLap_MoDo} given in Theorem \ref{T:Exi_Uni}.
  Then,
  \begin{align} \label{E:Reg_dt}
    \begin{aligned}
      &\partial^\bullet u, \, \partial_tu, \, \mathrm{div}(|\nabla u|^{p-2}\nabla u) \in L_{L^2}^2 = L^2(Q_T), \\
      &\partial_tu-\mathrm{div}(|\nabla u|^{p-2}\nabla u) = f \quad\text{a.e. in}\quad Q_T.
    \end{aligned}
  \end{align}
\end{theorem}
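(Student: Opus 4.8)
The plan is to re-run the Galerkin approximation of Section~\ref{S:Exis}, but to derive a higher-order energy estimate by testing the approximate equation \eqref{E:Approx} with $\partial^\bullet u_N$ in place of $u_N$. Two preliminary adjustments are needed. First, since $u_0\in W^{1,p}(\Omega_0)$ and $\mathcal{L}(\{w_k^0\}_k)$ is dense in $W^{1,p}(\Omega_0)$ by Lemma~\ref{L:Basis}, I would choose the initial approximations $u_{0,N}\in\mathcal{L}(\{w_1^0,\dots,w_N^0\})$ with $u_{0,N}\to u_0$ in $W^{1,p}(\Omega_0)$, so that $\|u_{0,N}\|_{W^{1,p}(\Omega_0)}\le c$; this is legitimate because the proof of Proposition~\ref{P:Energy} uses only the boundedness of $\|u_{0,N}\|_{L^2(\Omega_0)}$, and by Proposition~\ref{P:Uni} and Theorem~\ref{T:Exi_Uni} the corresponding $u_N$ still converge (along a subsequence) to the unique weak solution $u$. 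Second, since now $f\in L_{L^2}^2$, I would take $f_N\to f$ in $L_{L^2}^2$ with $f_N$ smooth in time, so that $\langle f_N,w_k^t\rangle_{W^{1,p}(\Omega_t)}=(f_N,w_k^t)_{L^2(\Omega_t)}$ and $\|f_N\|_{L_{L^2}^2}\le c$.

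Multiplying \eqref{E:Approx} by $\frac{d\alpha_N^k}{dt}(t)$, summing over $k=1,\dots,N$, and using $\partial^\bullet w_k^t=0$ (see \eqref{E:MT_EBF}), so that $\partial^\bullet u_N=\sum_k\frac{d\alpha_N^k}{dt}w_k^t$ and $g_N:=\nabla(\partial^\bullet u_N)=\sum_k\frac{d\alpha_N^k}{dt}\nabla w_k^t$, one gets
\begin{align*}
  \|\partial^\bullet u_N\|_{L^2(\Omega_t)}^2+\bigl([|\nabla u_N|^{p-2}\nabla u_N](t),g_N(t)\bigr)_{L^2(\Omega_t)}+\bigl(u_N,\mathbf{v}_\Omega\cdot g_N+(\partial^\bullet u_N)\,\mathrm{div}\,\mathbf{v}_\Omega\bigr)_{L^2(\Omega_t)}=\bigl(f_N,\partial^\bullet u_N\bigr)_{L^2(\Omega_t)}.
\end{align*}
For the $p$-energy term I would use the commutator identity $\partial^\bullet(\nabla u_N)=\nabla(\partial^\bullet u_N)-(\nabla\mathbf{v}_\Omega)\nabla u_N$ (immediate from $\partial^\bullet=\partial_t+\mathbf{v}_\Omega\cdot\nabla$) together with the Reynolds transport theorem to write $\bigl([|\nabla u_N|^{p-2}\nabla u_N],g_N\bigr)_{L^2(\Omega_t)}=\frac{1}{p}\frac{d}{dt}\|\nabla u_N(t)\|_{L^p(\Omega_t)}^p+E_N(t)$ with $|E_N(t)|\le c\|\nabla u_N(t)\|_{L^p(\Omega_t)}^p$. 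The delicate term is $(u_N,\mathbf{v}_\Omega\cdot g_N)_{L^2(\Omega_t)}$, which comes from the Neumann type boundary condition: it contains $\nabla(\partial^\bullet u_N)$, which the estimate does not control, and an integration by parts would produce an uncontrolled integral over $\partial\Omega_t$. Instead I would split $\mathbf{v}_\Omega\cdot g_N=\mathbf{v}_\Omega\cdot\partial^\bullet(\nabla u_N)+\mathbf{v}_\Omega\cdot(\nabla\mathbf{v}_\Omega)\nabla u_N$ and use the product rule for $\partial^\bullet$ and the Reynolds transport theorem to turn the leading part into a total time-derivative,
\begin{align*}
  \bigl(u_N,\mathbf{v}_\Omega\cdot\partial^\bullet(\nabla u_N)\bigr)_{L^2(\Omega_t)}=\frac{d}{dt}\int_{\Omega_t}u_N\,\mathbf{v}_\Omega\cdot\nabla u_N\,dx-\bigl(\partial^\bullet u_N,\mathbf{v}_\Omega\cdot\nabla u_N\bigr)_{L^2(\Omega_t)}-R_N(t),
\end{align*}
where $R_N(t)$ collects terms bounded by $c\|u_N(t)\|_{L^2(\Omega_t)}\|\nabla u_N(t)\|_{L^2(\Omega_t)}$ (here the hypothesis $\mathbf{v}_\Omega\in C^1(\overline{Q_T})^n$ is used, so that $\partial^\bullet\mathbf{v}_\Omega$ is bounded on $\overline{Q_T}$).

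Collecting these identities and integrating over $(0,t)$, the left-hand side becomes $\int_0^t\|\partial^\bullet u_N\|_{L^2(\Omega_s)}^2\,ds+\frac{1}{p}\|\nabla u_N(t)\|_{L^p(\Omega_t)}^p$ together with the endpoint term $\int_{\Omega_t}u_N(t)\mathbf{v}_\Omega(t)\cdot\nabla u_N(t)\,dx$, which I would bound by $\frac{1}{2p}\|\nabla u_N(t)\|_{L^p(\Omega_t)}^p+c_T$ using $\|\nabla u_N\|_{L^2(\Omega_t)}\le c\|\nabla u_N\|_{L^p(\Omega_t)}$ on the bounded domain and Young's inequality. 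On the right-hand side, the initial contribution is $\le c\bigl(\|\nabla u_{0,N}\|_{L^p(\Omega_0)}^p+\|u_{0,N}\|_{L^2(\Omega_0)}\|\nabla u_{0,N}\|_{L^2(\Omega_0)}\bigr)\le c_T$; the terms $(f_N,\partial^\bullet u_N)$, $(\partial^\bullet u_N,\mathbf{v}_\Omega\cdot\nabla u_N)$, $(u_N,(\partial^\bullet u_N)\,\mathrm{div}\,\mathbf{v}_\Omega)$ are split by Young's inequality with the $\varepsilon\|\partial^\bullet u_N\|_{L^2(\Omega_s)}^2$ parts absorbed on the left; and everything else is dominated by $c_T\int_0^t\bigl(\|\nabla u_N\|_{L^p(\Omega_s)}^p+\|u_N\|_{L^2(\Omega_s)}^2+\|f_N\|_{L^2(\Omega_s)}^2+1\bigr)\,ds\le c_T$ by Proposition~\ref{P:Energy} and $\|f_N\|_{L_{L^2}^2}\le c$. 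Hence $\partial^\bullet u_N$ is bounded in $L_{L^2}^2$ (and $\sup_t\|\nabla u_N(t)\|_{L^p(\Omega_t)}\le c_T$). Taking a weak limit $\partial^\bullet u_N\rightharpoonup\chi$ in $L_{L^2}^2$ and comparing with the defining relation of the weak material derivative (Definition~\ref{D:We_MatDe}) tested against $\psi\in\mathcal{D}_{W^{1,p}}$ identifies $\chi=\partial^\bullet u$, so $\partial^\bullet u\in L_{L^2}^2$. Since $\nabla u\in L_{L^p}^p\subset L_{L^2}^2$ and $\mathbf{v}_\Omega$ is bounded, also $\partial_tu=\partial^\bullet u-\mathbf{v}_\Omega\cdot\nabla u\in L_{L^2}^2=L^2(Q_T)$. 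Finally, restricting \eqref{E:pLap_WF} to $\psi\in C_c^\infty(Q_T)$ and rewriting $(u,\mathbf{v}_\Omega\cdot\nabla\psi+\psi\,\mathrm{div}\,\mathbf{v}_\Omega)_{L^2(\Omega_t)}=-(\mathbf{v}_\Omega\cdot\nabla u,\psi)_{L^2(\Omega_t)}$ by integration by parts over $\Omega_t$ (exact since $\psi$ is compactly supported) yields $\partial_tu-\mathrm{div}(|\nabla u|^{p-2}\nabla u)=f$ in $\mathcal{D}'(Q_T)$; as $\partial_tu,f\in L^2(Q_T)$, it follows that $\mathrm{div}(|\nabla u|^{p-2}\nabla u)=\partial_tu-f\in L^2(Q_T)$ and the equation holds a.e. in $Q_T$.

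The step I expect to be the main obstacle is the treatment of $(u_N,\mathbf{v}_\Omega\cdot\nabla(\partial^\bullet u_N))_{L^2(\Omega_t)}$, the term carried by the Neumann type boundary condition: it cannot be estimated by the available energy quantities and cannot be integrated by parts without creating a boundary integral invisible to the scheme. The remedy is the rewriting above as a total time-derivative plus lower-order contributions, which relies on both the $\partial^\bullet$-product rule and the Reynolds transport theorem and is the reason the additional hypothesis $\mathbf{v}_\Omega\in C^1(\overline{Q_T})^n$ is imposed. A secondary but essential point is to approximate $u_0$ in $W^{1,p}(\Omega_0)$, which keeps the initial quantity $\|\nabla u_{0,N}\|_{L^p(\Omega_0)}^p$ uniformly bounded and is permitted precisely because the weak solution is already known to be unique.
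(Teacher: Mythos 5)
Your proposal is correct and follows essentially the same route as the paper: approximate $u_0$ in $W^{1,p}(\Omega_0)$ and $f$ in $L_{L^2}^2$, test the Galerkin scheme with $\partial^\bullet u_N$, and turn both $(|\nabla u_N|^{p-2}\nabla u_N,\nabla\partial^\bullet u_N)$ and the boundary-induced term $(u_N,\mathbf{v}_\Omega\cdot\nabla\partial^\bullet u_N)$ into total time derivatives via the commutator identity, the Reynolds transport theorem, and the $\partial^\bullet$-product rule, exactly as in the paper's Lemma~\ref{L:dt_Int} and Proposition~\ref{P:HiEn}. The only cosmetic difference is that you carry out these reductions inline rather than isolating them as a lemma with explicit remainders $R_1^t,R_2^t$.
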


\begin{remark} \label{R:Vel_Reg}
  Under Assumption \ref{A:Domain} and Lemma \ref{L:Pinv_Reg}, we already have
  \begin{align*}
    \mathbf{v}_\Omega = [\partial_t\Phi_{(\cdot)}]\circ\Phi_{(\cdot)}^{-1} \in C(\overline{Q_T})^n, \quad \nabla\mathbf{v}_\Omega \in C(\overline{Q_T})^{n\times n}.
  \end{align*}
  Thus, to get $\mathbf{v}_\Omega\in C^1(\overline{Q_T})^n$, it is sufficient to assume further that
  \begin{align*}
    \partial_t^2\Phi_{(\cdot)} \in C(\overline{\Omega_0}\times[0,T])^n.
  \end{align*}
  Also, since $\mathbf{v}_\Omega\in C^1(\overline{Q_T})^n$ and $\partial^\bullet=\partial_t+\mathbf{v}_\Omega\cdot\nabla$, we have
  \begin{align} \label{E:Mat_Vel}
    |\mathbf{v}_\Omega| \leq c, \quad |\nabla\mathbf{v}_\Omega| \leq c, \quad |\partial^\bullet\mathbf{v}_\Omega| \leq c \quad\text{on}\quad \overline{Q_T}.
  \end{align}
\end{remark}

We establish Theorem \ref{T:Reg_dt} by the Galerkin method with a higher order energy estimate.
The main tool is the following differentiation formulas.

\begin{lemma} \label{L:dt_Int}
  Let $\psi$ be a function of the form
  \begin{align*}
    \psi(x,t) = \sum_{k=1}^N\theta_k(t)w_k^t(x), \quad (x,t)\in\overline{Q_T}, \quad N\in\mathbb{N}, \quad \theta_k\in C^1([0,T]).
  \end{align*}
  Then,
  \begin{align} \label{E:dt_Int}
    \begin{aligned}
      \frac{1}{p}\frac{d}{dt}\int_{\Omega_t}|\nabla\psi(x,t)|^p\,dx &= \bigl([|\nabla\psi|^{p-2}\nabla\psi](t),[\nabla\partial^\bullet\psi](t)\bigr)_{L^2(\Omega_t)}+R_1^t(\psi), \\
      \frac{d}{dt}\int_{\Omega_t}[\psi\mathbf{v}_\Omega\cdot\nabla\psi](x,t)\,dx &= \bigl(\psi(t),[\mathbf{v}_\Omega\cdot(\nabla\partial^\bullet\psi)](t)\bigr)_{L^2(\Omega_t)}+R_2^t(\psi)
    \end{aligned}
  \end{align}
  for all $t\in(0,T)$, where
  \begin{align*}
    R_1^t(\psi) &:= \int_{\Omega_t}\left\{-|\nabla\psi|^{p-2}\nabla\psi\cdot[(\nabla\mathbf{v}_\Omega)\nabla\psi]+\frac{1}{p}|\nabla\psi|^p\,\mathrm{div}\,\mathbf{v}_\Omega\right\}\,dx, \\
    R_2^t(\psi) &:= \int_{\Omega_t}\{[\partial^\bullet(\psi\mathbf{v}_\Omega)]\cdot\nabla\psi-\psi\mathbf{v}_\Omega\cdot[(\nabla\mathbf{v}_\Omega)\nabla\psi]+(\psi\mathbf{v}_\Omega\cdot\nabla\psi)\,\mathrm{div}\,\mathbf{v}_\Omega\}\,dx.
  \end{align*}
\end{lemma}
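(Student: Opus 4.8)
The plan is to derive both identities from the Reynolds transport theorem combined with the elementary commutation relation between $\nabla$ and $\partial^\bullet$. First I would record the available regularity. By \eqref{E:EBF_Reg} and $\theta_k\in C^1([0,T])$, the function $\psi$ lies in $C^1(\overline{Q_T})$ with $\nabla\psi\in C^1(\overline{Q_T})^n$; in particular $\psi$ is twice continuously differentiable in $x$. Under the hypothesis $\mathbf{v}_\Omega\in C^1(\overline{Q_T})^n$ in force throughout this section (see Remark~\ref{R:Vel_Reg} and \eqref{E:Mat_Vel}), the material derivatives $\partial^\bullet\psi$, $\partial^\bullet(\nabla\psi)$, and $\partial^\bullet(\psi\mathbf{v}_\Omega)$ all exist and are continuous on $\overline{Q_T}$. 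The key auxiliary identity is the commutation relation: since $\partial^\bullet=\partial_t+\mathbf{v}_\Omega\cdot\nabla$, differentiating in $x_i$ and using the chain rule gives $\partial^\bullet(\partial_i\psi)=\partial_i(\partial^\bullet\psi)-(\partial_i\mathbf{v}_\Omega)\cdot\nabla\psi$ for $i=1,\dots,n$, that is,
\begin{align*}
  \partial^\bullet(\nabla\psi)=\nabla(\partial^\bullet\psi)-(\nabla\mathbf{v}_\Omega)\nabla\psi \quad\text{on}\quad \overline{Q_T}
\end{align*}
in the matrix notation of Section~\ref{S:Prelim}.

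For the first identity, I would observe that $y\mapsto|y|^p$ is of class $C^1$ on $\mathbb{R}^n$ (recall $p>2$, so its gradient $y\mapsto p|y|^{p-2}y$ is continuous, with value $0$ at $y=0$), whence $(x,t)\mapsto|\nabla\psi(x,t)|^p$ belongs to $C^1(\overline{Q_T})$. Applying the Reynolds transport theorem to this integrand gives
\begin{align*}
  \frac{1}{p}\frac{d}{dt}\int_{\Omega_t}|\nabla\psi|^p\,dx=\int_{\Omega_t}\left(\frac{1}{p}\partial^\bullet(|\nabla\psi|^p)+\frac{1}{p}|\nabla\psi|^p\,\mathrm{div}\,\mathbf{v}_\Omega\right)dx.
\end{align*}
The chain rule yields $\partial^\bullet(|\nabla\psi|^p)=p|\nabla\psi|^{p-2}\nabla\psi\cdot\partial^\bullet(\nabla\psi)$ (which at points where $\nabla\psi=0$ is consistent, both sides vanishing), and inserting the commutation relation above splits the right-hand side into the principal term $\bigl([|\nabla\psi|^{p-2}\nabla\psi](t),[\nabla\partial^\bullet\psi](t)\bigr)_{L^2(\Omega_t)}$ plus precisely $R_1^t(\psi)$.

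For the second identity the scheme is identical: the integrand $\psi\mathbf{v}_\Omega\cdot\nabla\psi$ lies in $C^1(\overline{Q_T})$ by the regularity recorded above, so the Reynolds transport theorem gives $\frac{d}{dt}\int_{\Omega_t}\psi\mathbf{v}_\Omega\cdot\nabla\psi\,dx=\int_{\Omega_t}\bigl(\partial^\bullet(\psi\mathbf{v}_\Omega\cdot\nabla\psi)+(\psi\mathbf{v}_\Omega\cdot\nabla\psi)\,\mathrm{div}\,\mathbf{v}_\Omega\bigr)\,dx$; then the Leibniz and chain rules give $\partial^\bullet(\psi\mathbf{v}_\Omega\cdot\nabla\psi)=[\partial^\bullet(\psi\mathbf{v}_\Omega)]\cdot\nabla\psi+\psi\mathbf{v}_\Omega\cdot\partial^\bullet(\nabla\psi)$, and substituting the commutation relation isolates the principal term $\bigl(\psi(t),[\mathbf{v}_\Omega\cdot(\nabla\partial^\bullet\psi)](t)\bigr)_{L^2(\Omega_t)}$ together with $R_2^t(\psi)$. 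The only genuinely delicate points — and hence where I would be most careful — are verifying that $|\nabla\psi|^p$ is $C^1$ up to the zero set of $\nabla\psi$ (this is exactly where $p>2$ is used, so that the Reynolds transport theorem and the chain rule are legitimate) and keeping the signs in the commutation relation correct; once these are in place the stated formulas follow by a routine rearrangement of terms.
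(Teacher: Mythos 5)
Your proof is correct and follows essentially the same route as the paper's: Reynolds transport theorem applied to $|\nabla\psi|^p$ and $\psi\mathbf{v}_\Omega\cdot\nabla\psi$, combined with the commutation relation $\partial^\bullet(\nabla\psi)=\nabla(\partial^\bullet\psi)-(\nabla\mathbf{v}_\Omega)\nabla\psi$ and the chain/Leibniz rules. The extra care you take in noting that $p>2$ is what makes $y\mapsto|y|^p$ of class $C^1$ (so the chain rule and Reynolds formula apply across the zero set of $\nabla\psi$) is a nice explicit justification that the paper leaves implicit.
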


\begin{proof}
  By $\theta_k\in C^1([0,T])$, \eqref{E:EBF_Reg}, $\mathbf{v}_\Omega\in C^1(\overline{Q_T})^n$, and $p>2$, we see that
  \begin{align*}
    \psi \in C^1(\overline{Q_T}), \quad \nabla\psi\in C^1(\overline{Q_T})^n \quad\text{and thus}\quad |\nabla\psi|^p, \, \psi\mathbf{v}_\Omega\cdot\nabla\psi \in C^1(\overline{Q_T}).
  \end{align*}
  By this fact and Assumption \ref{A:Domain}, we can use the Reynolds transport theorem (see \cite{Gur81})
  \begin{align*}
    \frac{d}{dt}\int_{\Omega_t}\chi\,dx = \int_{\Omega_t}(\partial^\bullet\chi+\chi\,\mathrm{div}\,\mathbf{v}_\Omega)\,dx, \quad \chi\in C^1(\overline{Q_T})
  \end{align*}
  to the left-hand sides of \eqref{E:dt_Int}.
  After that, we further apply
  \begin{align*}
    \partial^\bullet(\partial_i\psi) &= (\partial_t+\mathbf{v}_\Omega\cdot\nabla)(\partial_i\psi) = \partial_i(\partial_t\psi+\mathbf{v}_\Omega\cdot\nabla\psi)-(\partial_i\mathbf{v}_\Omega)\cdot\nabla\psi \\
    &= \partial_i(\partial^\bullet\psi)-(\partial_i\mathbf{v}_\Omega)\cdot\nabla\psi \quad\text{on}\quad \overline{Q_T}, \quad i=1,\dots,n,
  \end{align*}
  which is allowed by the regularity of $\psi$.
  Then, we obtain \eqref{E:dt_Int}.
\end{proof}

\subsection{Another approximate solution} \label{SS:Re_Ap}
Let $f\in L_{L^2}^2$ and $u_0\in W^{1,p}(\Omega_0)$.
Since $\mathcal{L}(\{w_k^0\}_k)$ is dense in $W^{1,p}(\Omega_0)$ (see Lemma \ref{L:Basis}), we can take functions
\begin{align*}
  v_{0,N} = \sum_{k=1}^N\beta_{N,0}^kw_k^0, \quad N\in\mathbb{N} \quad\text{such that}\quad \lim_{N\to\infty}\|u_0-v_{0,N}\|_{W^{1,p}(\Omega_0)} = 0.
\end{align*}
Note that $v_{0,N}$ is not an orthogonal projection of $u_0$, but we have
\begin{align} \label{E:v0N}
  \|v_{0,N}\|_{L^2(\Omega_0)} \leq c\|v_{0,N}\|_{L^p(\Omega_0)} \leq c\|v_{0,N}\|_{W^{1,p}(\Omega_0)} \leq c
\end{align}
by H\"{o}lder's inequality and the strong convergence of $v_{0,N}$ in $W^{1,p}(\Omega_0)$.
Also, since $\mathcal{D}_{L^2}$ is dense in $L_{L^2}^2$, we can take functions $g_N\in\mathcal{D}_{L^2}$ such that
\begin{align} \label{E:gN}
  \lim_{N\to\infty}\|f-g_N\|_{L_{L^2}^2} = 0 \quad\text{and thus}\quad \|g_N\|_{L_{L^2}^2} \leq c \quad\text{for all}\quad N\in\mathbb{N}.
\end{align}
For each $N\in\mathbb{N}$, we seek for a function $v_N(t)=\sum_{k=1}^N\beta_N^k(t)w_k^t$ such that
\begin{multline} \label{E:vN_eq}
  \bigl(\partial^\bullet v_N(t),w_k^t\bigr)_{L^2(\Omega_t)}+\bigl([|\nabla v_N|^{p-2}\nabla v_N](t),\nabla w_k^t\bigr)_{L^2(\Omega_t)} \\
  +\bigl(v_N(t),[\mathbf{v}_\Omega\cdot\nabla w_k^t+w_k^t\,\mathrm{div}\,\mathbf{v}_\Omega](t)\bigr)_{L^2(\Omega_t)} = \bigl(g_N(t),w_k^t\bigr)_{L^2(\Omega_t)}
\end{multline}
for all $t\in(0,T)$ and $k=1,\dots,N$, and $v_N(0)=v_{0,N}$.
Here, the function
\begin{align*}
  \bigl(g_N(t),w_k^t\bigr)_{L^2(\Omega_t)} = \int_{\Omega_t}g_N(x,t)w_k^t(x)\,dx = \int_{\Omega_0}[\phi_{-t}g_N(t)](X)w_k^0(X)J_t(X)\,dX
\end{align*}
is continuous on $[0,T]$ by Assumption \ref{A:Domain} and $g_N\in\mathcal{D}_{L^2}$.
Using this fact, \eqref{E:v0N}, and \eqref{E:gN}, we can show as in Sections \ref{SS:Ex_Ap} and \ref{SS:Ex_Ene} that there exists a unique $C^1$ solution to \eqref{E:vN_eq} defined on the whole time interval $[0,T]$ and satisfying
\begin{align} \label{E:vN_Ener}
  \|v_N(t)\|_{L^2(\Omega_t)}^2+\int_0^t\|\nabla v_N(s)\|_{L^p(\Omega_s)}^p\,ds \leq c_T \quad\text{for all}\quad t\in[0,T].
\end{align}
Moreover, as in Sections \ref{SS:Ex_WF}--\ref{SS:Ex_Char}, we see that
\begin{align} \label{E:vN_WeCo}
  \lim_{N\to\infty}v_N = v \quad\text{weakly-$\ast$ in $L_{L^2}^\infty$ and weakly in $L_{W^{1,p}}^p$},
\end{align}
and $v$ is a weak solution to \eqref{E:pLap_MoDo} for the data $f$ and $u_0$.
Hence, $v=u$ by Proposition \ref{P:Uni}, where $u$ is the weak solution to \eqref{E:pLap_MoDo} given in Theorem \ref{T:Exi_Uni}.

\subsection{Higher order energy estimate} \label{SS:Re_HiEn}
Let us derive a higher order energy estimate for $v_N$, which gives a regularity of the weak material derivative of $v=u$.

\begin{proposition} \label{P:HiEn}
  We have
  \begin{align} \label{E:HiEn}
    \frac{1}{2}\int_0^t\|\partial^\bullet v_N(s)\|_{L^2(\Omega_s)}^2\,ds+\frac{1}{2p}\|\nabla v_N(t)\|_{L^p(\Omega_t)}^p \leq c_T \quad\text{for all}\quad t\in[0,T].
  \end{align}
\end{proposition}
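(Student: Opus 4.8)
The plan is to test the Galerkin system \eqref{E:vN_eq} with the time derivatives of the coefficients and exploit the differentiation formulas of Lemma \ref{L:dt_Int}. First I note that, since $\partial^\bullet w_k^t=0$ by \eqref{E:MT_EBF}, we have $\partial^\bullet v_N(t)=\sum_{k=1}^N\beta_N^k{}'(t)w_k^t$ and $\nabla\partial^\bullet v_N(t)=\sum_{k=1}^N\beta_N^k{}'(t)\nabla w_k^t$. Thus, multiplying the $k$-th equation of \eqref{E:vN_eq} by $\beta_N^k{}'(t)$ and summing over $k=1,\dots,N$ yields, for $t\in(0,T)$,
\begin{multline*}
  \|\partial^\bullet v_N(t)\|_{L^2(\Omega_t)}^2+\bigl([|\nabla v_N|^{p-2}\nabla v_N](t),[\nabla\partial^\bullet v_N](t)\bigr)_{L^2(\Omega_t)} \\
  +\bigl(v_N(t),[\mathbf{v}_\Omega\cdot\nabla\partial^\bullet v_N+(\partial^\bullet v_N)\,\mathrm{div}\,\mathbf{v}_\Omega](t)\bigr)_{L^2(\Omega_t)}=\bigl(g_N(t),\partial^\bullet v_N(t)\bigr)_{L^2(\Omega_t)}.
\end{multline*}
Since $v_N=\sum_{k=1}^N\beta_N^k w_k^t$ with $\beta_N^k\in C^1([0,T])$, Lemma \ref{L:dt_Int} applies with $\psi=v_N$, rewriting the second term as $\frac1p\frac{d}{dt}\|\nabla v_N(t)\|_{L^p(\Omega_t)}^p-R_1^t(v_N)$ and the $\mathbf{v}_\Omega\cdot\nabla\partial^\bullet v_N$ contribution to the third term as $\frac{d}{dt}\int_{\Omega_t}v_N\mathbf{v}_\Omega\cdot\nabla v_N\,dx-R_2^t(v_N)$.

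Next I would integrate the resulting identity over $(0,t)$, move the endpoint quantity $\int_{\Omega_t}v_N\mathbf{v}_\Omega\cdot\nabla v_N\,dx$ and the remainder terms to the right-hand side, and estimate. The data terms $\frac1p\|\nabla v_{0,N}\|_{L^p(\Omega_0)}^p$ and $\int_{\Omega_0}v_{0,N}\mathbf{v}_\Omega(\cdot,0)\cdot\nabla v_{0,N}\,dx$ are bounded by a constant because $\{v_{0,N}\}$ is bounded in $W^{1,p}(\Omega_0)$ together with \eqref{E:v0N} and $\|\nabla v_{0,N}\|_{L^2(\Omega_0)}\le c\|\nabla v_{0,N}\|_{L^p(\Omega_0)}$. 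Using \eqref{E:gN}, Hölder's and Young's inequalities, $\int_0^t(g_N,\partial^\bullet v_N)_{L^2(\Omega_s)}\,ds\le\varepsilon\int_0^t\|\partial^\bullet v_N\|_{L^2(\Omega_s)}^2\,ds+c$; similarly, by \eqref{E:Mat_Vel} and \eqref{E:vN_Ener}, $\int_0^t(v_N,(\partial^\bullet v_N)\,\mathrm{div}\,\mathbf{v}_\Omega)_{L^2(\Omega_s)}\,ds\le\varepsilon\int_0^t\|\partial^\bullet v_N\|_{L^2(\Omega_s)}^2\,ds+c_T$. For the remainder $R_1^s(v_N)$, every integrand is pointwise $\le c|\nabla v_N|^p$ by \eqref{E:Mat_Vel}, so $\int_0^tR_1^s(v_N)\,ds\le c\int_0^t\|\nabla v_N(s)\|_{L^p(\Omega_s)}^p\,ds\le c_T$ by \eqref{E:vN_Ener}. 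For $R_2^s(v_N)$ I write $\partial^\bullet(v_N\mathbf{v}_\Omega)=(\partial^\bullet v_N)\mathbf{v}_\Omega+v_N\partial^\bullet\mathbf{v}_\Omega$; the one term involving $\partial^\bullet v_N$ is controlled by $\varepsilon\|\partial^\bullet v_N(s)\|_{L^2(\Omega_s)}^2+c\|\nabla v_N(s)\|_{L^2(\Omega_s)}^2\le\varepsilon\|\partial^\bullet v_N(s)\|_{L^2(\Omega_s)}^2+c\|\nabla v_N(s)\|_{L^p(\Omega_s)}^p+c$ (using $p\ge2$ and \eqref{E:Volume}), and all other integrands are $\le c|v_N|\,|\nabla v_N|$, hence after integration bounded by $c_T$ through \eqref{E:vN_Ener}.

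The genuinely delicate point — the main obstacle — is the endpoint quantity $\int_{\Omega_t}v_N\mathbf{v}_\Omega\cdot\nabla v_N\,dx$, which at first sight reintroduces precisely the norm being estimated. The trick will be that, after Cauchy–Schwarz against the bounded field $\mathbf{v}_\Omega$, only the first power of a gradient norm appears: by \eqref{E:Mat_Vel}, \eqref{E:Volume}, $\|v_N(t)\|_{L^2(\Omega_t)}\le c_T$ from \eqref{E:vN_Ener}, and $\|\nabla v_N(t)\|_{L^2(\Omega_t)}\le c\|\nabla v_N(t)\|_{L^p(\Omega_t)}$, one gets $\bigl|\int_{\Omega_t}v_N\mathbf{v}_\Omega\cdot\nabla v_N\,dx\bigr|\le c_T\|\nabla v_N(t)\|_{L^p(\Omega_t)}\le\frac{1}{4p}\|\nabla v_N(t)\|_{L^p(\Omega_t)}^p+c_T$ by Young's inequality, which is then absorbed into the left-hand side. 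Choosing $\varepsilon$ small enough that the accumulated $\int_0^t\|\partial^\bullet v_N\|_{L^2(\Omega_s)}^2\,ds$ contributions are absorbed as well, and collecting all constants (which depend only on $T$ thanks to \eqref{E:vN_Ener}, so no Gronwall argument is required), yields \eqref{E:HiEn}.
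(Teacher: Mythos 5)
Your proposal is correct and follows essentially the same route as the paper: multiply \eqref{E:vN_eq} by $[\beta_N^k]'(t)$, sum, invoke Lemma \ref{L:dt_Int} to recast the two nonlinear terms as total time derivatives plus remainders, integrate on $(0,t)$, move $\bigl(v_N(t),[\mathbf{v}_\Omega\cdot\nabla v_N](t)\bigr)_{L^2(\Omega_t)}$ and the remainders to the right, and absorb a fraction of $\int_0^t\|\partial^\bullet v_N\|_{L^2}^2\,ds$ and of $\|\nabla v_N(t)\|_{L^p(\Omega_t)}^p$ into the left using \eqref{E:Mat_Vel}, H\"older, Young, and the lower-order energy bound \eqref{E:vN_Ener}. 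The only cosmetic difference is that you bound the endpoint term by $c_T\|\nabla v_N(t)\|_{L^p(\Omega_t)}$ and apply Young directly, whereas the paper passes through Lemma \ref{L:Lp_L2} after squaring; both yield the same absorption.
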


\begin{proof}
  Noting that
  \begin{align*}
    \partial^\bullet v_N(t) = \sum_{k=1}^N[\beta_N^k]'(t)w_k^t, \quad \nabla\partial^\bullet v_N(t) = \sum_{k=1}^N[\beta_N^k]'(t)\nabla w_k^t, \quad [\beta_N^k]'(t) = \frac{d\beta_N^k}{dt}(t)
  \end{align*}
  by \eqref{E:MT_EBF}, we multiply \eqref{E:vN_eq} by $[\beta_N^k]'(t)$ and sum over $k=1,\dots,N$.
  Then,
  \begin{multline*}
    \|\partial^\bullet v_N(t)\|_{L^2(\Omega_t)}^2+\bigl([|\nabla v_N|^{p-2}\nabla v_N](t),[\nabla\partial^\bullet v_N](t)\bigr)_{L^2(\Omega_t)} \\
    +\bigl(v_N(t),[\mathbf{v}_\Omega\cdot(\nabla\partial^\bullet v_N)+(\partial^\bullet v_N)\,\mathrm{div}\,\mathbf{v}_\Omega](t)\bigr)_{L^2(\Omega_t)} = \bigl(g_N(t),\partial^\bullet v_N(t)\bigr)_{L^2(\Omega_t)}.
  \end{multline*}
  Moreover, we apply \eqref{E:dt_Int} with $\psi=v_N$ to find that
  \begin{multline*}
    \|\partial^\bullet v_N(t)\|_{L^2(\Omega_t)}^2+\frac{1}{p}\frac{d}{dt}\|\nabla v_N(t)\|_{L^p(\Omega_t)}^p+\frac{d}{dt}\bigl(v_N(t),[\mathbf{v}_\Omega\cdot\nabla v_N](t)\bigr)_{L^2(\Omega_t)} \\
    = \bigl(g_N(t),\partial^\bullet v_N(t)\bigr)_{L^2(\Omega_t)}-\bigl(v_N(t),[(\partial^\bullet v_N)\,\mathrm{div}\,\mathbf{v}_\Omega](t)\bigr)_{L^2(\Omega_t)}+R_1^t(v_N)+R_2^t(v_N).
  \end{multline*}
  We integrate both sides over $(0,t)$ and move the term
  \begin{multline*}
    \int_0^t\frac{d}{ds}\bigl(v_N(s),[\mathbf{v}_\Omega\cdot\nabla v_N](s)\bigr)_{L^2(\Omega_s)}\,ds \\
    = \bigl(v_N(t),[\mathbf{v}_\Omega\cdot\nabla v_N](t)\bigr)_{L^2(\Omega_t)}-(v_{0,N},\mathbf{v}_\Omega(0)\cdot\nabla v_{0,N})_{L^2(\Omega_0)}
  \end{multline*}
  to the right-hand side.
  Then, we further apply \eqref{E:Mat_Vel} and H\"{o}lder's and Young's inequalities, and then use \eqref{E:Lp_L2} with a suitable $\delta>0$.
  As a result, we obtain
  \begin{multline} \label{Pf_HiEn:Ineq}
    \int_0^t\|\partial^\bullet v_N(s)\|_{L^2(\Omega_s)}^2\,ds+\frac{1}{p}\|\nabla v_N(t)\|_{L^p(\Omega_t)}^p-\frac{1}{p}\|\nabla v_{0,N}\|_{L^p(\Omega_0)}^p \\
    \leq \frac{1}{2}\int_0^t\|\partial^\bullet v_N(s)\|_{L^2(\Omega_s)}^2\,ds+\frac{1}{2p}\|\nabla v_N(t)\|_{L^p(\Omega_t)}^p+cI_N(t)
  \end{multline}
  for all $t\in[0,T]$, where
  \begin{multline*}
    I_N(t) := 1+\|v_{0,N}\|_{L^2(\Omega_0)}^2+\|\nabla v_{0,N}\|_{L^p(\Omega_0)}^p+\|v_N(t)\|_{L^2(\Omega_t)}^2 \\
    +\int_0^t\Bigl(1+\|v_N(s)\|_{L^2(\Omega_s)}^2+\|\nabla v_N(s)\|_{L^p(\Omega_s)}^p+\|g_N(t)\|_{L^2(\Omega_s)}^2\Bigr)\,ds.
  \end{multline*}
  Moreover, we observe by \eqref{E:v0N}, \eqref{E:gN}, and \eqref{E:vN_Ener} that
  \begin{align*}
    \frac{1}{p}\|\nabla v_{0,N}\|_{L^p(\Omega_0)}^p \leq c, \quad I_N(t) \leq c_T \quad\text{for all}\quad t\in[0,T].
  \end{align*}
  Thus, we get \eqref{E:HiEn} by making the first two terms on the right-hand side of \eqref{Pf_HiEn:Ineq} absorbed into the left-hand side and by using the above estimates.
\end{proof}

It follows from \eqref{E:vN_WeCo} with $v=u$ and \eqref{E:HiEn} that
\begin{align*}
  \lim_{N\to\infty}\partial^\bullet v_N = \partial^\bullet u \quad\text{weakly in $L_{L^2}^2$}, \quad \lim_{N\to\infty}\nabla v_N = \nabla u \quad\text{weakly-$\ast$ in $L_{L^p}^\infty$}.
\end{align*}
By this result, $u\in L_{W^{1,p}}^p$, and $\mathbf{v}_\Omega\in C^1(\overline{Q_T})^n$, we find that
\begin{align*}
  \partial^\bullet u\in L_{L^2}^2, \quad \partial_tu = \partial^\bullet u-\mathbf{v}_\Omega\cdot\nabla u \in L_{L^2}^2.
\end{align*}
Now, for $\psi\in L_{W^{1,p}}^p$, we can write \eqref{E:pLap_WF} as
\begin{align*}
  (|\nabla u|^{p-2}\nabla u,\nabla\psi)_{L^2(Q_T)} = (f-\partial^\bullet u-u\,\mathrm{div}\,\mathbf{v}_\Omega,\psi)_{L^2(Q_T)}-(u\mathbf{v}_\Omega,\nabla\psi)_{L^2(Q_T)}.
\end{align*}
Moreover, when $\psi\in\mathcal{D}(Q_T)$, we carry out integration by parts to get
\begin{align*}
  (|\nabla u|^{p-2}\nabla u,\nabla\psi)_{L^2(Q_T)} &= (f-\partial^\bullet u-u\,\mathrm{div}\,\mathbf{v}_\Omega,\psi)_{L^2(Q_T)}+(\mathrm{div}(u\mathbf{v}_\Omega),\psi)_{L^2(Q_T)} \\
  &= (f-\partial_tu,\psi)_{L^2(Q_T)}.
\end{align*}
Hence, $-\mathrm{div}(|\nabla u|^{p-2}\nabla u)=f-\partial_tu \in L^2(Q_T)$ and Theorem \ref{T:Reg_dt} is valid.

\subsection{Boundary condition} \label{SS:Re_Bo}
We also find that the boundary condition of \eqref{E:pLap_MoDo} is satisfied in the following weak sense.
Noting that $1/p+1/p'=1$, we write
\begin{align*}
  W^{1-1/p,p}(\partial\Omega_t) := \{v|_{\partial\Omega_t} \mid v\in W^{1,p}(\Omega_t)\}, \quad W^{-1/p',p'}(\partial\Omega_t) := [W^{1-1/p,p}(\partial\Omega_t)]^\ast.
\end{align*}

\begin{proposition} \label{P:BC_Weak}
  Under the assumptions of Theorem \ref{T:Reg_dt}, we have
  \begin{align} \label{E:BC_Weak}
    [|\nabla u|^{p-2}\partial_\nu u+V_\Omega u](t) = 0 \quad\text{in}\quad W^{-1/p',p'}(\partial\Omega_t)
  \end{align}
  for almost all $t\in(0,T)$.
\end{proposition}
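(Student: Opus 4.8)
The plan is to obtain \eqref{E:BC_Weak} directly from the weak formulation \eqref{E:pLap_WF} by integrating by parts in space and exploiting the improved regularity \eqref{E:Reg_dt} of Theorem~\ref{T:Reg_dt}. The crucial structural point is that, for a.e.\ $t\in(0,T)$, the field $[|\nabla u|^{p-2}\nabla u](t)$ lies in $L^{p'}(\Omega_t)^n$ (indeed $\bigl\|\,|\nabla u|^{p-2}\nabla u\,\bigr\|_{L^{p'}(\Omega_t)}=\|\nabla u\|_{L^p(\Omega_t)}^{p-1}$) and, by \eqref{E:Reg_dt}, has distributional divergence $[\mathrm{div}(|\nabla u|^{p-2}\nabla u)](t)\in L^2(\Omega_t)\subset L^{p'}(\Omega_t)$. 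Since $\Omega_t$ is a $C^2$ (hence Lipschitz) domain, such a field admits a normal trace $[|\nabla u|^{p-2}\partial_\nu u](t)\in W^{-1/p',p'}(\partial\Omega_t)$, defined through the generalized Gauss--Green formula
\begin{align*}
  \bigl\langle [|\nabla u|^{p-2}\partial_\nu u](t),\psi(t)\bigr\rangle_{\partial\Omega_t}
  = \bigl([|\nabla u|^{p-2}\nabla u](t),\nabla\psi(t)\bigr)_{L^2(\Omega_t)}
  + \bigl([\mathrm{div}(|\nabla u|^{p-2}\nabla u)](t),\psi(t)\bigr)_{L^2(\Omega_t)}
\end{align*}
for $\psi(t)\in W^{1,p}(\Omega_t)$ (the right-hand side being independent of how a boundary datum is extended into $\Omega_t$, by the definition of weak divergence). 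Likewise, $[u\mathbf{v}_\Omega](t)\in L^2(\Omega_t)^n$ has divergence $[\mathbf{v}_\Omega\cdot\nabla u+u\,\mathrm{div}\,\mathbf{v}_\Omega](t)\in L^2(\Omega_t)$, so its normal trace equals $[V_\Omega u](t)\in W^{1-1/p,p}(\partial\Omega_t)$, which is the product of the boundary trace of $u(t)$ with the continuous function $V_\Omega$.

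Next I would fix an arbitrary $\psi\in L_{W^{1,p}}^p$ and rewrite \eqref{E:pLap_WF}. Since $\partial^\bullet u,f\in L_{L^2}^2$, the duality brackets there become $L^2$ inner products, and $\partial^\bullet u=\partial_tu+\mathbf{v}_\Omega\cdot\nabla u$. Applying the two Gauss--Green identities above to the second and third integrals in \eqref{E:pLap_WF}, the contributions $(\mathbf{v}_\Omega\cdot\nabla u,\psi)_{L^2(\Omega_t)}$ and $(u\,\mathrm{div}\,\mathbf{v}_\Omega,\psi)_{L^2(\Omega_t)}$ cancel, and \eqref{E:pLap_WF} reduces to
\begin{align*}
  \int_0^T\Bigl(\partial_tu-\mathrm{div}(|\nabla u|^{p-2}\nabla u)-f,\psi(t)\Bigr)_{L^2(\Omega_t)}\,dt
  +\int_0^T\bigl\langle [|\nabla u|^{p-2}\partial_\nu u+V_\Omega u](t),\psi(t)\bigr\rangle_{\partial\Omega_t}\,dt=0.
\end{align*}
The first integral vanishes by the equation in \eqref{E:Reg_dt}, whence $\int_0^T\langle [|\nabla u|^{p-2}\partial_\nu u+V_\Omega u](t),\psi(t)\rangle_{\partial\Omega_t}\,dt=0$ for every $\psi\in L_{W^{1,p}}^p$.

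It then remains to localize in time. For fixed $\Psi_0\in W^{1,p}(\Omega_0)$ the choice $\psi(t):=\chi(t)\,\phi_t\Psi_0$ with $\chi\in L^\infty(0,T)$ is admissible, and, writing the pairing through the bulk integrals and pulling them back to $\Omega_0$ via $\Phi_t$, one checks that $t\mapsto\langle [|\nabla u|^{p-2}\partial_\nu u+V_\Omega u](t),\phi_t\Psi_0\rangle_{\partial\Omega_t}$ is measurable and, being bounded in modulus by $c\bigl(\|\nabla u(t)\|_{L^p(\Omega_t)}^{p-1}+\|\mathrm{div}(|\nabla u|^{p-2}\nabla u)(t)\|_{L^2(\Omega_t)}+\|u(t)\|_{W^{1,p}(\Omega_t)}\bigr)\|\Psi_0\|_{W^{1,p}(\Omega_0)}$, lies in $L^{p'}(0,T)$; hence it vanishes for a.e.\ $t$. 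Taking a countable dense set $\{\Psi_0^{(j)}\}_j\subset W^{1,p}(\Omega_0)$ (available by separability) and the union $\mathcal{N}$ of the corresponding null sets, for $t\notin\mathcal{N}$ the functional $[|\nabla u|^{p-2}\partial_\nu u+V_\Omega u](t)$ annihilates every $\phi_t\Psi_0^{(j)}$. Finally, given $t\notin\mathcal{N}$ and $\phi\in W^{1-1/p,p}(\partial\Omega_t)$, I extend $\phi$ to some $\Psi\in W^{1,p}(\Omega_t)$, set $\Psi_0:=\phi_{-t}\Psi$, approximate $\Psi_0$ by the $\Psi_0^{(j)}$ in $W^{1,p}(\Omega_0)$ so that $\phi_t\Psi_0^{(j)}\to\Psi$ in $W^{1,p}(\Omega_t)$, and pass to the limit using the boundedness of $[|\nabla u|^{p-2}\partial_\nu u+V_\Omega u](t)$ on $W^{1,p}(\Omega_t)$, obtaining $\langle [|\nabla u|^{p-2}\partial_\nu u+V_\Omega u](t),\phi\rangle_{\partial\Omega_t}=0$, i.e.\ \eqref{E:BC_Weak}.

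The steps I expect to need the most care are the justification of the generalized Gauss--Green formula for $L^{p'}$ vector fields with $L^{p'}$ divergence on the $C^2$ domain $\Omega_t$ (a standard but nontrivial fact on divergence-measure fields, to be invoked with a suitable reference, together with the independence of the normal trace from the extension) and the measurability and integrability verification underlying the time-localization argument; the algebraic cancellation that turns \eqref{E:pLap_WF} into the boundary identity is routine once this framework is in place.
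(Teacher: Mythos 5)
Your proof is correct and takes essentially the same route as the paper. The paper combines the two vector fields into a single $\mathbf{z}(t):=[|\nabla u|^{p-2}\nabla u+u\mathbf{v}_\Omega](t)$, verifies $\mathbf{z}(t)\in L^{p'}(\Omega_t)^n$ with $\mathrm{div}\,\mathbf{z}(t)\in L^2\subset L^{p'}$, and invokes the generalized Gauss--Green formula of \cite[Theorem~III.2.2]{Gal11} exactly once, whereas you split the field and apply it twice; the two bookkeeping choices are equivalent. The one place where your write-up is arguably more careful than the paper's is the time-localization: after obtaining, for each fixed test function, that the boundary pairing vanishes for a.e.\ $t$, one must intersect the countably many exceptional null sets over a countable dense family of test functions before concluding that the full boundary functional vanishes for a.e.\ $t$; you make this explicit, while the paper passes over it (using $\theta\in\mathcal{D}(0,T)$ and $\Psi_0\in C^1(\overline{\Omega_0})$, then density of $C^1(\overline{\Omega_t})$ in $W^{1,p}(\Omega_t)$ via the bijection $\phi_t$) without isolating the null-set issue. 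Both are standard and unproblematic; there is no gap in your argument.
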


\begin{proof}
  For a.a. $t\in(0,T)$, we see by $u\in L_{W^{1,p}}^p$, $\mathbf{v}_\Omega\in C^1(\overline{Q_T})^n$, and \eqref{E:Reg_dt} that
  \begin{align*}
    \mathbf{z}(t) := [|\nabla u|^{p-2}\nabla u+u\mathbf{v}_\Omega](t) \in L^{p'}(\Omega_t)^n, \quad \mathrm{div}\,\mathbf{z}(t) \in L^2(\Omega_t) \subset L^{p'}(\Omega_t).
  \end{align*}
  Thus, by \cite[Theorem III.2.2]{Gal11}, we can consider the normal trace
  \begin{align*}
    \mathbf{z}(t)\cdot\bm{\nu}(\cdot,t) = [|\nabla u|^{p-2}\partial_\nu u+V_\Omega u](t) \quad\text{in}\quad W^{-1/p',p'}(\partial\Omega_t)
  \end{align*}
  and the following integration by parts formula holds for $\psi\in W^{1,p}(\Omega_t)$:
  \begin{align} \label{Pf_BCW:GIbP}
    (\mathbf{z}(t),\nabla\psi)_{L^2(\Omega_t)} = \langle[|\nabla u|^{p-2}\partial_\nu u+V_\Omega u](t),\psi\rangle_{W^{1-1/p,p}(\partial\Omega_t)}-(\mathrm{div}\,\mathbf{z}(t),\psi)_{L^2(\Omega_t)}.
  \end{align}
  Let $\theta\in\mathcal{D}(0,T)$ and $\Psi_0\in C^1(\overline{\Omega_0})$.
  Since $\theta(\cdot)[\phi_{(\cdot)}\Psi_0]\in L_{W^{1,p}}^p$ by \eqref{E:LBq_Equi}, we have
  \begin{align*}
    \int_0^T\theta(t)\bigr(\mathbf{z}(t),\nabla(\phi_t\Psi_0)\bigr)_{L^2(\Omega_t)}\,dt = \int_0^T\theta(t)([f-\partial^\bullet u-u\,\mathrm{div}\,\mathbf{v}_\Omega](t),\phi_t\Psi_0)_{L^2(\Omega_t)}\,dt
  \end{align*}
  by setting $\psi=\theta(\cdot)[\phi_{(\cdot)}\Psi_0]$ in \eqref{E:pLap_WF}.
  This holds for any $\theta$, and thus
  \begin{align*}
    \bigl(\mathbf{z}(t),\nabla(\phi_t\Psi_0)\bigr)_{L^2(\Omega_t)} = ([f-\partial^\bullet u-u\,\mathrm{div}\,\mathbf{v}_\Omega](t),\phi_t\Psi_0)_{L^2(\Omega_t)}
  \end{align*}
  for a.a. $t\in(0,T)$.
  We apply \eqref{E:Reg_dt}, \eqref{Pf_BCW:GIbP}, and $\partial^\bullet u=\partial_tu+\mathbf{v}_\Omega\cdot\nabla u$ to get
  \begin{align*}
    \langle[|\nabla u|^{p-2}\partial_\nu u+V_\Omega u](t),\phi_t\Psi_0\rangle_{W^{1-1/p,p}(\partial\Omega_t)} = 0 \quad\text{for all}\quad \Psi_0\in C^1(\overline{\Omega_0}).
  \end{align*}
  Since $\phi_t\colon C^1(\overline{\Omega_0})\to C^1(\overline{\Omega_t})$ is a bijection by Assumption \ref{A:Domain}, it follows that
  \begin{align*}
    \langle[|\nabla u|^{p-2}\partial_\nu u+V_\Omega u](t),\Psi_t\rangle_{W^{1-1/p,p}(\partial\Omega_t)} = 0 \quad\text{for all}\quad \Psi_t\in C^1(\overline{\Omega_t}).
  \end{align*}
  Hence, by the density of $C^1(\overline{\Omega_t})$ in $W^{1,p}(\Omega_t)$, we conclude that \eqref{E:BC_Weak} is valid.
\end{proof}

\begin{remark} \label{R:BC_Weak}
  If we could get the second order regularity
  \begin{align*}
    |\nabla u|^{p-2}\partial_iu \in L_{H^1}^2, \quad i=1,\dots,n,
  \end{align*}
  then the boundary condition of \eqref{E:pLap_MoDo} would hold a.e. on $\partial_\ell Q_T$.
  Such a regularity result was recently shown in the case of a non-moving domain and the Dirichlet boundary condition \cite{CiaMaz20}, which relies on a regularity result for the elliptic $p$-Laplace equation \cite{CiaMaz18,CiaMaz19}.
  It seems that the arguments of \cite{CiaMaz18,CiaMaz19} do not directly apply the case of the boundary condition of \eqref{E:pLap_MoDo} and some further discussions will be required.
  This is out of the scope of this paper, so we leave the above regularity problem open.
\end{remark}

%%% Section 6 %%%
\section{Extension to a Leray--Lions type operator} \label{S:LerLio}
In this section, we briefly discuss the existence and uniqueness of a weak solution to the parabolic equation with a Leray--Lions type operator
\begin{align} \label{E:Para_LL}
  \left\{
  \begin{alignedat}{3}
    \partial_tu-\mathrm{div}\bigl(\mathbf{A}(x,t,u,\nabla u)\bigr) &= f &\quad &\text{in} &\quad &Q_T , \\
    \mathbf{A}(x,t,u,\nabla u)\cdot\bm{\nu}+V_\Omega u &= 0 &\quad &\text{on} &\quad &\partial_\ell Q_T, \\
    u|_{t=0} &= u_0 &\quad &\text{in} &\quad &\Omega_0,
  \end{alignedat}
  \right.
\end{align}
Here, $\mathbf{A}=\mathbf{A}(x,t,u,\mathbf{z})$ is a vector-valued function
\begin{align*}
  \mathbf{A}\colon(Q_T\cup\partial_\ell Q_T)\times\mathbb{R}\times\mathbb{R}^n \to \mathbb{R}^n.
\end{align*}
To extend the result of Theorem \ref{T:Exi_Uni} to \eqref{E:Para_LL}, we make the following assumptions.

\begin{assumption} \label{A:LLOP}
  The function $\mathbf{A}$ is a Carath\'{e}odory function, i.e.,
  \begin{itemize}
    \item $Q_T\ni(x,t)\mapsto\mathbf{A}(x,t,u,\mathbf{z})$ is measurable for all fixed $(u,\mathbf{z})\in\mathbb{R}\times\mathbb{R}^n$,
    \item $\mathbb{R}\times\mathbb{R}^n\ni(u,\mathbf{z})\mapsto\mathbf{A}(x,t,u,\mathbf{z})$ is continuous for a.a. fixed $(x,t)\in Q_T$.
  \end{itemize}
  Moreover, there exist constants $c,p,p'>0$ and functions $\Lambda_1,\Lambda_2$ such that
  \begin{align*}
    p \in (2,\infty), \quad \frac{1}{p}+\frac{1}{p'} = 1, \quad \Lambda_1\in L_{L^{p'}}^{p'}, \quad \Lambda_2\in L_{L^1}^1
  \end{align*}
  and the inequalities
  \begin{gather}
    |\mathbf{A}(x,t,u,\mathbf{z})| \leq c\{|u|^{p-1}+|\mathbf{z}|^{p-1}+\Lambda_1(x,t)\}, \label{E:LL_Bdd} \\
    \mathbf{A}(x,t,u,\mathbf{z})\cdot\mathbf{z} \geq c|\mathbf{z}|^p-\Lambda_2(x,t), \label{E:LL_Coer} \\
    \{\mathbf{A}(x,t,u,\mathbf{z}_1)-\mathbf{A}(x,t,u,\mathbf{z}_2)\}\cdot(\mathbf{z}_1-\mathbf{z}_2) \geq 0, \label{E:LL_Mono} \\
    |\mathbf{A}(x,t,u_1,\mathbf{z})-\mathbf{A}(x,t,u_2,\mathbf{z})| \leq c|u_1-u_2||\mathbf{z}|^{p-1} \label{E:LL_Con}
  \end{gather}
  hold for all $(x,t)\in Q_T$ and $u,u_1,u_2\in\mathbb{R}$ and $\mathbf{z},\mathbf{z}_1,\mathbf{z}_2\in\mathbb{R}^n$.
\end{assumption}

Here, the conditions \eqref{E:LL_Bdd}--\eqref{E:LL_Mono} are standard in the literature (see e.g. \cite{LaSoUr68,Lio69}).
Also, the condition \eqref{E:LL_Con} is a simplified version of \cite[(2.6)]{CaNoOr17}.
Note that the parabolic $p$-Laplace equation \eqref{E:pLap_MoDo} corresponds to the case $\mathbf{A}(x,t,u,\mathbf{z})=|\mathbf{z}|^{p-2}\mathbf{z}$.

We define a weak solution to \eqref{E:Para_LL} as in Definition \ref{D:WS_pLap}, where we use the exponent $p$ in the above assumption and replace $|\nabla u|^{p-2}\nabla u$ by $\mathbf{A}(\cdot,\cdot,u,\nabla u)$.
Then, we can show that the existence and uniqueness of a weak solution to \eqref{E:Para_LL} hold as in Theorem \ref{T:Exi_Uni}.

\begin{theorem} \label{T:LL_ExUn}
  For all $f\in L_{[W^{1,p}]^\ast}^{p'}$ and $u_0\in L^2(\Omega_0)$, there exists a unique weak solution to \eqref{E:Para_LL}.
\end{theorem}

Since the proof of Theorem \ref{T:LL_ExUn} is similar to that of Theorem \ref{T:Exi_Uni}, we do not repeat the whole proof.
Instead, we explain where we modify the proof below.

\subsection{Uniqueness} \label{SS:LL_Un}
In the proof of Proposition \ref{P:Uni}, we replace
\begin{align*}
  \mathbf{b} = |\nabla u_1|^{p-2}\nabla u_1-|\nabla u_2|^{p-2}\nabla u_2, \quad \mathcal{I}_\varepsilon^2(t) := \int_0^t\Bigl(\mathbf{b}(s),\bigl[\nabla[\zeta_\varepsilon'(v)]\bigr](s)\Bigr)_{L^2(\Omega_s)}\,ds
\end{align*}
by $\mathbf{d}:=\mathbf{d}_1+\mathbf{d}_2$ and $\mathcal{K}_\varepsilon(t):=\mathcal{K}_\varepsilon^1(t)+\mathcal{K}_\varepsilon^2(t)$, where
\begin{align*}
  \mathbf{d}_1 &:= \mathbf{A}(x,t,u_1,\nabla u_1)-\mathbf{A}(x,t,u_1,\nabla u_2), \\
  \mathbf{d}_2 &:= \mathbf{A}(x,t,u_1,\nabla u_2)-\mathbf{A}(x,t,u_2,\nabla u_2), \\
  \mathcal{K}_\varepsilon^j(t) &:= \int_0^t\Bigl(\mathbf{d}_j(s),\bigl[\nabla[\zeta_\varepsilon'(v)]\bigr](s)\Bigr)_{L^2(\Omega_s)}\,ds, \quad j=1,2.
\end{align*}
Noting that $v=u_1-u_2$ and $\nabla[\zeta_\varepsilon'(v)]=\zeta_\varepsilon''(v)\nabla v$, we have $\mathcal{K}_\varepsilon^1(t)\geq0$ by \eqref{Pf_Un:abs} and \eqref{E:LL_Mono} as in \eqref{Pf_Un:I2}.
Next, we apply \eqref{E:LL_Con} to $\mathbf{d}_2$ and find that
\begin{align*}
  |\mathcal{K}_\varepsilon^2(t)| \leq c\int_{Q_T}\bigl[\zeta_\varepsilon''(v)|v||\nabla u_2|^{p-1}|\nabla v|\bigr](x,s)\,dxds.
\end{align*}
Moreover, $|\nabla u_2|^{p-1}|\nabla v|\in L_{L^1}^1$ by $u_2,v\in L_{W^{1,p}}^p$ and H\"{o}lder's inequality.
Thus, as in \eqref{Pf_Un:I3}, we can get $\mathcal{K}_\varepsilon^2(t)\to0$ as $\varepsilon\to0$.
The other parts of the proof remain the same.

\subsection{Approximate solutions} \label{SS:LL_Ap}
Instead of \eqref{E:Approx}, we consider
\begin{multline} \label{E:LL_App}
  \bigl(\partial^\bullet u_N(t),w_k^t\bigr)_{L^2(\Omega_t)}+\bigl([\mathbf{A}(\cdot,\cdot,u_N,\nabla u_N)](t),\nabla w_k^t\bigr)_{L^2(\Omega_t)} \\
  +\bigl(u_N(t),[\mathbf{v}_\Omega\cdot\nabla w_k^t+w_k^t\,\mathrm{div}\,\mathbf{v}_\Omega](t)\bigr)_{L^2(\Omega_t)} = \langle f_N(t),w_k^t\rangle_{W^{1,p}(\Omega_t)}
\end{multline}
and $u_N(0)=u_{0,N}$ for $u_N(t)=\sum_{k=1}^N\alpha_N^k(t)w_k^t$.
This is equivalent to
\begin{align} \label{E:LL_ODE}
  \left\{
  \begin{alignedat}{2}
    \frac{d\bm{\alpha}_N}{dt}(t) &= \mathbf{h}_N(\bm{\alpha}_N(t),t), &\quad &\mathbf{h}_N(\mathbf{a},t) := \mathbf{M}_N(t)^{-1}[\mathbf{f}_N(t)-\bm{\zeta}_N(\mathbf{a},t)-\mathbf{B}_N(t)\mathbf{a}], \\
    \alpha_N^k(0) &= (u_0,w_k^0)_{L^2(\Omega_0)}, &\quad &k=1,\dots,N.
  \end{alignedat}
  \right.
\end{align}
Here, we use the notations in Section \ref{SS:Ex_Ap}.
Also, we set
\begin{align*}
  \zeta_N^k(\mathbf{a},t) &:= \bigl([\mathbf{A}(\cdot,\cdot,u_{\mathbf{a}},\nabla u_{\mathbf{a}})](t),\nabla w_k^t\bigr)_{L^2(\Omega_t)}, \quad u_{\mathbf{a}}(t) := \sum_{k=1}^Na_kw_k^t
\end{align*}
for $\mathbf{a}=(a_1,\dots,a_N)^{\mathrm{T}}\in\mathbb{R}^N$ and write $\bm{\zeta}_N(\mathbf{a},t)=\bigl(\zeta_N^k(\mathbf{a},t)\bigr)_k$.

In general, $\mathbf{h}_N$ is not continuous in $t$ and locally Lipschitz continuous in $\mathbf{a}$.
Thus, we use the Carath\'{e}odory existence theorem to get a solution to \eqref{E:LL_ODE}.
We can see that
\begin{itemize}
  \item $[0,T]\ni t\mapsto\mathbf{h}_N(\mathbf{a},t)$ is measurable for all fixed $\mathbf{a}\in\mathbb{R}^N$,
  \item $\mathbb{R}^N\ni\mathbf{a}\mapsto\mathbf{h}_N(\mathbf{a},t)$ is continuous for a.a. fixed $t\in[0,T]$
\end{itemize}
by Assumptions \ref{A:Domain} and \ref{A:LLOP}, Lemma \ref{L:Uni_PD}, the definitions \eqref{E:Def_fNk} and \eqref{E:Def_BN} of $\mathbf{f}_N$ and $\mathbf{B}_N$, the conditions \eqref{E:EBF_Reg} and \eqref{E:FN_DWp}, the change of variables $x=\Phi_t(X)$ (for the measurability of $\zeta_N^k$ in $t$), and the dominated convergence theorem (for the continuity of $\zeta_N^k$ in $\mathbf{a}$).
Also,
\begin{align*}
  |\zeta_N^k(\mathbf{a},t)| &\leq c\Bigl(\|u_{\mathbf{a}}(t)\|_{L^p(\Omega_t)}^{p-1}+\|\nabla u_{\mathbf{a}}(t)\|_{L^p(\Omega_t)}^{p-1}+\|\Lambda_1(t)\|_{L^{p'}(\Omega_t)}\Bigr)\|\nabla w_k^t\|_{L^p(\Omega_t)} \\
  &\leq c_T\Bigl(|\mathbf{a}|^{p-1}+\|\Lambda_1(t)\|_{L^{p'}(\Omega_t)}\Bigr)
\end{align*}
by \eqref{E:LL_Bdd}, H\"{o}lder's inequality, $u_{\mathbf{a}}(t)=\sum_{k=1}^Na_kw_k^t$, and \eqref{E:EBF_Reg}.
Hence, we get
\begin{gather*}
  |\mathbf{h}_N(\mathbf{a},t)| \leq c_T\Bigl(1+|\mathbf{a}|+|\mathbf{a}|^{p-1}+\|\Lambda_1(t)\|_{L^{p'}(\Omega_t)}\Bigr), \quad (\mathbf{a},t)\in\mathbb{R}^N\times[0,T], \\
  \|\Lambda_1(\cdot)\|_{L^{p'}(\Omega_{(\cdot)})} \in L^{p'}(0,T) \subset L^1(0,T).
\end{gather*}
Thus, if we have an a priori bound of a solution to \eqref{E:LL_ODE}, then we can get the existence of a solution to \eqref{E:LL_ODE} on the whole time interval $[0,T]$ by using the Carath\'{e}odory existence theorem repeatedly (note that the uniqueness of a solution to \eqref{E:LL_ODE} is not required).

Let us derive an a priori bound of a solution to \eqref{E:LL_ODE}.
Suppose that a solution $\bm{\alpha}_N$ exists on some time interval $[0,\tau]$ with $\tau\leq T$.
For the corresponding $u_N$, we proceed as in the proofs of Propositions \ref{P:Est_Ave} and \ref{P:Energy} by using \eqref{E:LL_Coer}.
Then, we have \eqref{E:Est_W1p} and
\begin{align*}
  &\|u_N(t)\|_{L^2(\Omega_t)}^2+\int_0^t\|\nabla u_N(s)\|_{L^p(\Omega_s)}^p\,ds \\
  &\qquad \leq c_T\left\{\|u_0\|_{L^2(\Omega_0)}^2+\int_0^t\Bigl(\|u_N(s)\|_{L^2(\Omega_s)}^2+\xi_N(s)\Bigr)\,ds\right\}
\end{align*}
for all $t\in[0,\tau]$, where $\xi_N(s):=\|\Lambda_2(s)\|_{L^1(\Omega_s)}+\|f_N(s)\|_{[W^{1,p}(\Omega_s)]^\ast}^{p'}+1$.
Since
\begin{align*}
  \int_0^\tau\xi_N(s)\,ds \leq \int_0^T\xi_N(s)\,ds \leq \|\Lambda_2\|_{L_{L^1}^1}+\|f_N\|_{L_{[W^{1,p}]^\ast}^{p'}}^{p'}+T \leq c_T
\end{align*}
by $\Lambda_2\in L_{L^1}^1$ and \eqref{E:fN_StCo}, we observe by Gronwall's inequality that the energy estimate \eqref{E:Energy} holds for all $t\in[0,\tau]$.
Thus, by \eqref{E:Uni_PD} and \eqref{E:Energy}, we obtain the a priori bound
\begin{align*}
  |\bm{\alpha}_N(t)|^2 \leq c\bm{\alpha}_N(t)\cdot[\mathbf{M}_N(t)\bm{\alpha}_N(t)] = c\|u_N(t)\|_{L^2(\Omega_t)}^2 \leq c_T
\end{align*}
for all $t\in[0,\tau]$, where $c_T>0$ depends on $T$ but is independent of $\tau$.

\subsection{Convergence and characterization} \label{SS:LL_Con}
For the approximate solution $u_N$, we observe by \eqref{E:Est_W1p}, \eqref{E:Energy}, and \eqref{E:LL_Bdd} that the functions
\begin{align*}
  u_N \in L_{L^2}^\infty\cap L_{W^{1,p}}^p, \quad \mathbf{A}(\cdot,\cdot,u_N,\nabla u_N) \in [L_{L^{p'}}^{p'}]^n
\end{align*}
are bounded uniformly in $N\in\mathbb{N}$.
Thus, as in \eqref{E:WeConv}, we have
\begin{alignat*}{2}
  \lim_{N\to\infty}u_N &= u &\quad &\text{weakly-$\ast$ in $L_{L^2}^\infty$ and weakly in $L_{W^{1,p}}^p$}, \\
  \lim_{N\to\infty}\mathbf{A}(\cdot,\cdot,u_N,\nabla u_N) &= \mathbf{w} &\quad &\text{weakly in $[L_{L^{p'}}^{p'}]^n$}.
\end{alignat*}
Replacing $|\nabla\varphi|^{p-2}\nabla\varphi$ with $\mathbf{A}(\cdot,\cdot,\varphi,\nabla\varphi)$ and using Assumption \ref{A:LLOP}, we can proceed completely in the same way as Sections \ref{SS:Ex_WF}--\ref{SS:Ex_Char} up to \eqref{Pf_Ch:sigma} (divided by $\sigma>0$):
\begin{align*}
  \int_0^T(\mathbf{w}-\mathbf{A}(\cdot,\cdot,\zeta_\sigma,\nabla\zeta_\sigma),\theta\nabla\psi)_{L^2(\Omega_t)}\,dt \geq 0,
\end{align*}
where $\theta\in\mathcal{D}(0,T)$, $\theta\geq0$ and $\zeta_\sigma=u-\sigma\psi$ with $\sigma\in(0,1)$, $\psi\in\mathcal{D}_{W^{1,p}}$.
Now,
\begin{align*}
  \lim_{\sigma\to0}\mathbf{A}(\cdot,\cdot,\zeta_\sigma,\nabla\zeta_\sigma)\cdot(\theta\nabla\psi) = \mathbf{A}(\cdot,\cdot,u,\nabla u)\cdot(\theta\nabla\psi) \quad\text{a.e. in}\quad Q_T,
\end{align*}
since $\mathbf{A}$ is a Carath\'{e}odory function.
Moreover, ,
\begin{align*}
  &|\mathbf{A}(\cdot,\cdot,\zeta_\sigma,\nabla\zeta_\sigma)\cdot(\theta\nabla\psi)| \\
  &\quad \leq c(|\zeta_\sigma|^{p-1}+|\nabla\zeta_\sigma|^{p-1}+\Lambda_1)|\nabla\psi| \\
  &\quad \leq c(|u|^{p-1}+|\psi|^{p-1}+|\nabla u|^{p-1}+|\nabla\psi|^{p-1}+\Lambda_1)|\nabla\psi|
\end{align*}
on $Q_T$ by \eqref{E:LL_Bdd}, \eqref{Pf_IV:ab}, and $\sigma\in(0,1)$.
Since the last line is in $L_{L^1}^1$ by H\"{o}lder's inequality, we can apply the dominated convergence theorem to find that
\begin{align*}
  \lim_{\sigma\to0}\int_0^T(\mathbf{A}(\cdot,\cdot,\zeta_\sigma,\nabla\zeta_\sigma),\theta\nabla\psi)_{L^2(\Omega_t)}\,dt = \int_0^T(\mathbf{A}(\cdot,\cdot,u,\nabla u),\theta\nabla\psi)_{L^2(\Omega_t)}\,dt,
\end{align*}
and the remaining part of the proof of Proposition \ref{P:Chara} is the same.
This completes the modifications required in the proof of Theorem \ref{T:LL_ExUn}.

%%% Section 7 %%%
\section{Concluding remarks} \label{S:Concl}
We should note that the arguments of this paper heavily rely on the setting $p>2$.
The case $1<p<2$ seems to be more difficult and require further discussions.

Our approach is also applicable to the parabolic $p$-Laplace equation (and a Leray--Lions type equation) on a closed hypersurface $\Gamma_t$ in $\mathbb{R}^n$ moving with velocity $\mathbf{v}_\Gamma$:
\begin{align} \label{E:pLap_Sur}
  \partial_\Gamma^\bullet\eta+\eta\,\mathrm{div}_\Gamma\mathbf{v}_\Gamma-\mathrm{div}_\Gamma(|\nabla_\Gamma\eta|^{p-2}\nabla_\Gamma\eta) = f \quad\text{on}\quad \Gamma_t, \quad t\in(0,T).
\end{align}
Here, $\partial_\Gamma^\bullet$, $\nabla_\Gamma$, and $\mathrm{div}_\Gamma$ are the material derivative along $\mathbf{v}_\Gamma$, the tangential gradient, and the surface divergence, respectively.
When $f=0$, the equation \eqref{E:pLap_Sur} comes from the local conservation law
\begin{align*}
  \frac{d}{dt}\int_{\mathcal{M}_t}\eta\,d\mathcal{H}^{n-1} = -\int_{\partial\mathcal{M}_t}\mathbf{q}\cdot\bm{\mu}\,d\mathcal{H}^{n-2} \quad\text{for any}\quad \mathcal{M}_t \subset \Gamma_t
\end{align*}
with flux $\mathbf{q}=-|\nabla_\Gamma\eta|^{p-2}\nabla_\Gamma\eta$, where $\mathcal{H}^k$ is the Hausdorff measure of dimension $k$ and $\bm{\mu}$ is the co-normal on $\partial\mathcal{M}_t$ (see e.g. \cite{DziEll07,DziEll13_AN}).
Note that, contrary to the case of \eqref{E:pLap_MoDo}, a weak form of \eqref{E:pLap_Sur} does not involve the most problematic term
\begin{align*}
  \int_0^T\bigl(\eta(t),[\mathbf{v}_\Gamma\cdot\nabla_\Gamma\psi](t)\bigr)_{L^2(\Gamma_t)}\,dt.
\end{align*}
Thus, the proof of the existence of a weak solution will be simplified for \eqref{E:pLap_Sur}.
Also, due to the absence of the above term, the arguments of \cite[Section 7.1]{AlCaDjEl23} are applicable to \eqref{E:pLap_Sur}, which only use weak convergence results.
In \cite[Example 7.3]{AlCaDjEl23}, the modified equation
\begin{align*}
  \partial_\Gamma^\bullet\eta+\eta\,\mathrm{div}_\Gamma\mathbf{v}_\Gamma-\mathrm{div}_\Gamma(|\nabla_\Gamma\eta|^{p-2}\nabla_\Gamma\eta)+\alpha|\eta|^{p-2}\eta = f \quad\text{on}\quad \Gamma_t, \quad t\in(0,T)
\end{align*}
with $\alpha>0$ is considered, but the restriction $\alpha\neq0$ can be removed if one obtains a result similar to Proposition \ref{P:Est_Ave}.

%%% Acknowledgments %%%
\section*{Acknowledgments}
The work of the author was supported by JSPS KAKENHI Grant Number 23K12993.
The author would like to thank the anonymous referee for valuable comments on this work including the suggestion for extension to a Leray--Lions type operator.

\section*{Data availability}
No data was used for the research described in the article.

%%% References %%%
\bibliographystyle{abbrv}
\bibliography{We_pLap_MoDo_Ref}

\end{document}